\numberwithin{equation}{section}
\newtheorem{theorem}{Theorem}[section]
\newtheorem{lemma}[theorem]{Lemma}
\newtheorem{corollary}[theorem]{Corollary}
\newtheorem{proposition}[theorem]{Proposition}
\theoremstyle{definition}
\newtheorem{definition}[theorem]{Definition}
\newtheorem{remark}[theorem]{Remark}
\renewcommand\labelenumi{(\roman{enumi})}
\renewcommand\theenumi\labelenumi
\renewcommand{\epsilon}{\varepsilon}
\newcommand{\B}{\mathbb{B}}
\newcommand{\C}{\mathbb{C}}
\newcommand{\D}{\mathbb{D}}
\newcommand{\E}{\mathbb{E}}
\newcommand{\F}{\mathcal{F}}
\newcommand{\G}{\mathcal{G}}
\newcommand{\bH}{\mathbb{H}}
\newcommand{\cL}{\mathcal{L}}
\newcommand{\N}{\mathbb{N}}
\renewcommand{\P}{\mathcal{P}}
\newcommand{\R}{\mathbb{R}}
\renewcommand{\S}{\mathcal{S}}
\newcommand{\U}{\mathbb{U}}
\newcommand{\id}{\operatorname{id}}
\newcommand{\tr}{\operatorname{tr}}
\newcommand{\Tr}{\operatorname{Tr}}
\renewcommand{\Re}{\operatorname{Re}}
\renewcommand{\Im}{\operatorname{Im}}
\newcommand{\sa}{\mathrm{sa}}
\newcommand{\1}{\mathbf{1}}
\newcommand{\Prob}{\operatorname{Prob}}
\renewcommand{\dot}[1] {\overset{\,_{\mbox{\bf .}}}{#1}}
\title[The Dyson equation for $2$-positive maps and densities of states]{The Dyson equation for $2$-positive maps and H\"older bounds for the L\'evy distance of densities of states}
\author[T. Mai]{Tobias Mai}
\address{Saarland University, Department of Mathematics, D-66123 Saarbr\"ucken, Germany}
\email{mai@math.uni-sb.de}
\date{\today}
\thanks{The author wants to thank Johannes Hoffmann and Roland Speicher for the interesting discussions which have inspired this work.}
\keywords{operator-valued semicircular elements, Dyson equation, density of states, Burgers equation, free noncommutative function theory}
\subjclass[2010]{46T20, 46G05, 46G20, 35R15, 46L54, 60B20}
\begin{document}

\begin{abstract}
The so-called density of states is a Borel probability measure on the real line associated with the solution of the Dyson equation which we set up, on any fixed $C^\ast$-probability space, for a selfadjoint offset and a $2$-positive linear map. Using techniques from free noncommutative function theory, we prove explicit H\"older bounds for the L\'evy distance of two such measures when any of the two parameters varies.
As the main tools for the proof, which are also of independent interest, we show that solutions of the Dyson equation have strong analytic properties and evolve along any $C^1$-path of $2$-positive linear maps according to an operator-valued version of the inviscid Burgers equation.
\end{abstract}

\maketitle

\section{Introduction and presentation of the main results}

Voiculescu's groundbreaking discovery \cite{Voi1991} of the phenomenon of asymptotic freeness has started an enormously fruitful exchange of techniques between the area of operator algebras and random matrix theory through free probability theory.
Undoubtedly among the most prominent of such connections which were facilitated by free probability theory are operator-valued semicircular elements. Let us recall that those are noncommutative random variables which constitute the canonical generalization of semicircular elements to the framework of operator-valued free probability theory, while the latter are the free probability counterpart of normally distributed random variables in classical probability; we refer to \cite{Speicher1998,NicaSpeicher2006,MS2017} for an introduction to (operator-valued) free probability theory and to operator-valued semicircular elements in particular.
These are fundamental objects in free probability \cite{Voi1995,Speicher1998,Shlyakhtenko1999,Shlyakhtenko2000,BPV2013,Soltanifar2014}, which fully reveal their rich structure at the frontiers of that field.
On the one hand, the study of random matrix models for operator-valued semicircular elements has led to spectacular developments on the operator-algebraic side like in \cite{Dykema1994,Radulescu1994,Shlyakhtenko1999} and \cite{HT2005,HST2006}. On the other hand, random matrix models for operator-valued semicircular elements are of great interest beyond operator-algebraic applications and this elegant description of their asymptotic eigenvalue distribution enabled a detailed study thereof like in \cite{Shlyakhtenko1996,HRFS2007,RFOBS2008,AjEK2019,AEKN2019,AEK2020}; see also \cite{SS2015,MSY2018,MSY2019,BM2021,CMMPY2021}.

Operator-valued semicircular elements often do not show up explicitly in such contexts, but rather in the guise of their operator-valued Cauchy transforms or by equations determining the latter. To build our intuition, let us take a closer look at the model considered in \cite{HT2005,HST2006}. Voiculescu's multivariate generalization \cite{Voi1991} of Wigner's semicircle law \cite{Wigner1955} (in its almost sure version \cite{HP2000,T2000} which generalizes \cite{Arnold1967} to the multivariate case) teaches us that tuples $(X^N_1,\dots,X^N_n)$ of independent standard Gaussian random matrices of size $N \times N$ converge in distribution almost surely as $N\to \infty$ towards a tuple $(S_1,\dots,S_n)$ of freely independent standard semicircular elements. Explicitly, with respect to the normalized trace $\tr_N$ on $M_N(\C)$, we have for every $k\in\N$ and each choice of indices $1\leq i_1,\dots,i_k \leq n$ that
\begin{equation}\label{eq:GUE_convergence}
\lim_{N\to \infty} \tr_N(X_{i_1}^N X_{i_2}^N \cdots X_{i_k}^N) = \tau(S_{i_1} S_{i_2} \cdots S_{i_k}) \qquad\text{almost surely},
\end{equation}
where we suppose that $(S_1,\dots,S_n)$ is realized in some $C^\ast$-probability space $(M,\tau)$. 
For fixed selfadjoint matrices $b_0,b_1,\dots,b_n\in M_m(\C)$, we aim at understanding the asymptotic behavior of the selfadjoint random matrices $X^N := b_0 \otimes \1_N + b_1 \otimes X_1^N + \dots + b_n \otimes X_n^N$. It follows from \eqref{eq:GUE_convergence} that the empirical eigenvalue distribution $\mu_{X^N}$ of $X^N$ converges in distribution almost surely as $N\to \infty$ to the analytic distribution $\mu_S$ of the noncommutative random variable $S := b_0 \otimes \1 + b_1 \otimes S_1 + \dots + b_n \otimes S_n$ in the $C^\ast$-probability space $(M_m(\C) \otimes M, \tr_m \otimes \tau)$. In fact, by means of operator-valued free probability, we can even compute $\mu_S$. The crucial observation is that $S$, in the operator-valued $C^\ast$-probability space $(M_m(\C) \otimes M, \E, M_m(\C))$ with conditional expectation $\E := \id_{M_m(\C)} \otimes \tau$, forms an operator-valued semicircular element with mean $b_0$ and the covariance $\eta: M_m(\C) \to M_m(\C)$ given by $\eta(b) := \sum^n_{j=1} b_j b b_j$ for $b\in M_m(\C)$. This implies that the operator-valued Cauchy transform $G_S: \bH^+(M_m(\C)) \to \bH^-(M_m(\C))$ defined by $G_S(b) := \E[(b-S)^{-1}]$ solves the equation
$$(b - b_0) G_S(b) = \1 + \eta(G_S(b)) G_S(b) \qquad\text{for $b\in \bH^+(M_m(\C))$}.$$
In fact, $G_S$ is hereby uniquely determined and can be computed efficiently by some fixed point iteration; see \cite{HRFS2007,HMS2021}. This change of perspective thus settles the problem of determining $\mu_S$, since $G_S$ recovers the scalar-valued Cauchy transform $\G_{\mu_S}$ of $\mu_S$ via the relation $\G_{\mu_S}(z) = \tr_m(G_S(z\1))$ for all $z\in\C^+$ and since $\mu_S$ can be obtained from $\G_{\mu_S}$ via Stieltjes inversion.

In this paper, we are concerned with a significant generalization of this construction which originates in \cite{AjEK2019,AEK2020} and which we briefly sketch now; a more detailed presentation will be given in Section \ref{sec:Dyson-density_of_states}, in particular in Definition \ref{def:density_of_states}.
Let $(B,\phi)$ be a $C^\ast$-probability space. We put $B_\sa := \{b \in B \mid b^\ast = b\}$ and denote by $\P(B)$ the set of all $\C$-linear maps $\eta: B \to B$ which are positive; see Subsection \ref{subsec:positive_maps} for more details.
To any $\rho = (b_0,\eta) \in B_\sa \times \P(B)$, called \emph{data pair}, we associate the holomorphic function $G_\eta: \bH^+(B) \to \bH^-(B)$ which is uniquely determined by the \emph{Dyson equation}
\begin{equation}\label{eq:Dyson_equation}
b G_\eta(b) = \1 + \eta(G_\eta(b)) G_\eta(b) \qquad\text{for $b\in \bH^+(B)$}
\end{equation}
and consider in turn the holomorphic function $\G_\rho: \C^+ \to \C^-$ which is defined by $\G_\rho(z) := \phi(G_\eta(z \1 - b_0))$ for all $z\in \C^+$. Since $\G_\rho$ satisfies $\lim_{y\to \infty} iy\G_\rho(i y) = 1$, it must be the Cauchy transform of a hereby uniquely determined Borel probability measure $\mu_\rho$ on $\R$, to which we shall refer as the \emph{density of states} associated with $\rho$. In this way, we obtain a mapping
$$\mu:\ B_\sa \times \P(B) \to \Prob(\R),\quad \rho \mapsto \mu_\rho$$
taking values in the set $\Prob(\R)$ of all Borel probability measures on the real line $\R$.

For the operator-valued semicircular element $S$ with mean $b_0$ and covariance $\eta$ which we introduced above, this procedure recovers $\mu_S$ as $\mu_{(b_0,\eta)}$; in fact, it works for any operator-valued semicircular element $S$ in an arbitrary operator-valued $C^\ast$-probability space $(A,\E,B)$ and with respect to some distinguished state $\phi$ on $B$. This highlights the fundamental fact that, if we keep $(B,\phi)$ fixed, then $\mu_S$ only depends on $b_0$ and the completely positive linear map $\eta$, but not on the concrete realization $S$ or the corresponding space $(A,\E,B)$.
We point out that with \cite[Proposition 2.2]{PV2013}, it is possible to construct for every data pair $(b_0,\eta)$ with $\eta$ being completely positive an operator-valued $C^\ast$-probability space $(A,\E,B)$ and an operator-valued semicircular element $S$ with mean $b_0$ and covariance $\eta$. The map $\mu$ introduced above not only circumvents this construction, but even works in situations where no such realization of $(b_0,\eta)$ can exist.

Our aim is to quantify the proximity of $\mu_{\rho_0}$ and $\mu_{\rho_1}$ in terms of the data pairs $\rho_0,\rho_1 \in B_\sa \times \P(B)$. To this end, we endow
\begin{itemize}
 \item $\Prob(\R)$ with the L\'evy metric $L: \Prob(\R) \times \Prob(\R) \to [0,1]$, which is known to provide a metrization of weak convergence;
 \item $B_\sa$ with the metric $d_{B_\sa}$ induced by the norm $\|\cdot\|$ on the $C^\ast$-algebra $B$; 
 \item $\P(B)$ with the metric $d_{\P(B)}$ induced by the norm $\|\eta\| := \sup_{\|b\|\leq 1} \|\eta(b)\|$ on the space of all bounded linear operators $\eta: B \to B$.
\end{itemize}
More precisely, we will derive H\"older bounds for $L(\mu_{\rho_1},\mu_{\rho_0})$ in terms of $d_{B_\sa}(b_{0,1},b_{0,0})$ and $d_{\P(B)}(\eta_1,\eta_0)$, depending on the varying argument, but uniformly in the respective other argument; for that purpose, however, we need to restrict $\mu$ to the subset $B_\sa \times \P_2(B)$ of $B_\sa \times \P(B)$, where $\P_2(B)$ stands for the set of all $\eta \in \P(B)$ which are $2$-positive. The necessity of restricting $\mu$ arose also in \cite{AEK2020}, but there the authors required $\eta$ to satisfy some strict version of positivity called \emph{flatness}. This point and that we drop any symmetry condition for $\eta$ are the crucial differences between our approach and that of \cite{AEK2020}, aside from the technical variation that we work in the more general setting of $C^\ast$-probability spaces which are not necessarily tracial, whereas \cite{AEK2020} is formulated for tracial $W^\ast$-probability spaces.

For the sake of simplicity, we will denote the restriction of $\mu$ to $B_\sa \times \P_2(B)$ again by $\mu$. The precise statements, which constitute the first main result of this paper, read as follows. 

\begin{theorem}\label{thm:Levy_Hoelder}
Let $(B,\phi)$ be a $C^\ast$-probability space and consider the map $\mu: B_\sa \times \P_2(B) \to \Prob(\R)$ defined above. Then the following bounds hold true:
\begin{enumerate}
 \item\label{it:Levy_Hoelder-1} With $c_1 := 3 (\frac{1}{\pi})^{1/3} < 2.0484$, we have for each choice of $\eta \in \P_2(B)$ that
$$L(\mu_{(b_{0,1},\eta)},\mu_{(b_{0,0},\eta)}) \leq c_1\, \|b_{0,1} - b_{0,0}\|^{1/3} \qquad\text{for all $b_{0,0},b_{0,1} \in B_\sa$.}$$
 \item\label{it:Levy_Hoelder-2} With $c_2 := 5 (\frac{1}{4\pi})^{2/5} < 1.8168$, we have for each choice of $b_0 \in B_\sa$ that
$$L(\mu_{(b_0,\eta_1)},\mu_{(b_0,\eta_0)}) \leq c_2\, \|\eta_1 - \eta_0\|^{1/5} \qquad\text{for all $\eta_0,\eta_1\in\P_2(B)$.}$$
\end{enumerate}
In particular, if the Cartesian product $B_\sa \times \P_2(B)$ is endowed with any product metric $d$, then the map $\mu: (B_\sa \times \P_2(B),d) \to (\Prob(\R),L)$ is uniformly continuous.
\end{theorem}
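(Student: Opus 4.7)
My plan is to interpolate linearly between the two data pairs and transport the problem from $\Prob(\R)$ back to scalar Cauchy transforms. The segment $\rho_t := (1-t)\rho_0 + t\rho_1$ stays in $B_\sa \times \P_2(B)$ because $\P_2(B)$ is convex. The essential input, which the abstract announces as the second main tool of the paper, is that $t \mapsto \G_{\rho_t}(z)$ is continuously differentiable and that its derivative can be read off from the operator-valued inviscid Burgers equation satisfied by $G_{\eta_t}$. Combining this with sharp resolvent-type estimates for $G_\eta$ (of the flavor $\|G_\eta(b)\| \leq \|(\Im b)^{-1}\|$ and its Schwarz--Pick-type differential version) should produce a priori bounds
\begin{gather*}
|\partial_t \G_{\rho_t}(z)| \,\leq\, \frac{K_1\, \|b_{0,1} - b_{0,0}\|}{(\Im z)^2} \qquad\text{for part (\ref{it:Levy_Hoelder-1}),}\\
|\partial_t \G_{\rho_t}(z)| \,\leq\, \frac{K_2\, \|\eta_1 - \eta_0\|}{(\Im z)^4} \qquad\text{for part (\ref{it:Levy_Hoelder-2}),}
\end{gather*}
with universal constants $K_1,K_2$. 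The two extra powers of $(\Im z)^{-1}$ in the second bound reflect the quadratic dependence $\eta(G)G$ of the Dyson equation on $G$. Integrating in $t \in [0,1]$ then transfers these to uniform pointwise bounds on $|\G_{\rho_1}(z) - \G_{\rho_0}(z)|$.

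The second ingredient is a L\'evy-type inequality converting the above into a bound on $L(\mu_{\rho_1},\mu_{\rho_0})$. I expect an estimate of the shape $L(\mu,\nu) \lesssim y + \sup_{x\in\R} |\G_\mu(x+iy) - \G_\nu(x+iy)|$ obtained by comparing both $\mu$ and $\nu$ with their Poisson smoothings at scale $y$: the first term bounds the smoothing error, and the second controls the difference of the smoothed measures after integrating $\Im(\G_\mu - \G_\nu)$ over $\R$, where the uniform support bound on $\mu_\rho$ (automatic from the operator-semicircular description in terms of $\|b_0\|$ and $\|\eta\|$) handles the tail. Feeding the derivative estimates into this bound and optimizing in $y > 0$ to balance $y$ against $K_i\, \|\text{perturbation}\|/y^k$ yields $y \sim \|\text{perturbation}\|^{1/(k+1)}$, so the H\"older exponents $1/3$ (for $k=2$) and $1/5$ (for $k=4$) drop out. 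The explicit constants $3\pi^{-1/3}$ and $5(4\pi)^{-2/5}$ follow by tracking the universal factors through this optimization.

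The uniform-continuity claim is then immediate: for $\rho = (b_0,\eta)$ and $\rho' = (b_0',\eta')$ in $B_\sa \times \P_2(B)$, insert the intermediate pair $(b_0',\eta)$, apply (\ref{it:Levy_Hoelder-1}) to the $b_0$-shift and (\ref{it:Levy_Hoelder-2}) to the $\eta$-shift, and invoke the triangle inequality for $L$; any product metric $d$ dominates both $d_{B_\sa}$ and $d_{\P(B)}$ up to constants. The main obstacle in the plan is the first step, namely deriving the operator-valued Burgers equation and the sharp norm bounds on $\partial_t \G_{\rho_t}$. Part (\ref{it:Levy_Hoelder-2}) is the genuinely harder case, because the perturbation of $\eta$ enters the Dyson equation through a nonlinear term, and the purely operator-norm control needed for the $\|\eta_1 - \eta_0\|$-bound is exactly where $2$-positivity (via a Kadison--Schwarz-type inequality applied to $\eta_1 - \eta_0$) is expected to play its decisive role, replacing the stronger flatness assumption used in \cite{AEK2020}.
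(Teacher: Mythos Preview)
Your global strategy---linear interpolation, the operator-valued Burgers equation for the $\eta$-variation, a L\'evy bound via Cauchy transforms, and optimization in the height---is exactly the paper's, and your triangle-inequality deduction of uniform continuity matches as well. However, the specific estimates you write down do not actually combine to give the exponents $1/3$ and $1/5$; the right exponents emerge in your sketch only through compensating errors.

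First, the L\'evy-via-Cauchy inequality is not of the form $L(\mu,\nu)\lesssim y+\sup_x|\G_\mu(x+iy)-\G_\nu(x+iy)|$. The Cauchy kernel has heavy tails, so the smoothing error is $2\sqrt{y/\pi}$, not $O(y)$, and the distance of the smoothed measures is controlled by the $L^1$-norm $\frac{1}{\pi}\int_{\R}|\Im\G_\mu-\Im\G_\nu|\,ds$, not by the sup. Your fallback of using compact support of $\mu_\rho$ to pass from sup to integral would insert a factor depending on $\|b_0\|$ and $\|\eta\|$ and destroy the uniformity asserted in the theorem (and for merely $2$-positive $\eta$ there is no operator-semicircular realization to appeal to). Second, your pointwise bound $|\partial_t\G_{\rho_t}(z)|\leq K_2\|\eta_1-\eta_0\|\,(\Im z)^{-4}$ for part~(\ref{it:Levy_Hoelder-2}) does not follow from the listed ingredients: the Burgers equation $\dot{G}_t(b)=-(DG_t)(b)\big((\eta_1-\eta_0)(G_t(b))\big)$ together with $\|G_t(b)\|\leq(\Im z)^{-1}$ and $\|(DG_t)(b)\|\leq(\Im z)^{-2}$ yields only $(\Im z)^{-3}$.

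What the paper uses instead is the \emph{state-level} Schwarz--Pick estimate
\[
\bigl|\phi\bigl((DG_\eta)(b)\,w\bigr)\bigr|\;\leq\;-\,\Im\bigl(\phi(G_\eta(b))\bigr)\,\|\Im(b)^{-1}\|\,\|w\|,
\]
proved by evaluating the amplification $G_\eta^{(2)}$ on upper-triangular $2\times2$ blocks; this is exactly where $2$-positivity enters (not via any Kadison--Schwarz inequality for $\eta_1-\eta_0$, which is not positive in general). Feeding the Burgers equation into this gives $|\dot{\G}_t(z)|\leq-\Im(\G_t(z))\,(\Im z)^{-k}\,\theta$ with $k=1$ for~(\ref{it:Levy_Hoelder-1}) and $k=2$ for~(\ref{it:Levy_Hoelder-2}). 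The crucial trick is then that $-\frac{1}{\pi}\Im\G_t(s+iy)\,ds$ is itself a probability measure, so integrating over $s\in\R$ produces $\frac{1}{\pi}\int_{\R}|\dot{\G}_t(s+iy)|\,ds\leq\theta/y^k$ with no support bound whatsoever. Combining with $L\leq 2\sqrt{y/\pi}+\theta/y^k$ and optimizing in $y$ gives the exponents $1/(2k+1)\in\{1/3,1/5\}$ and the stated constants $c_1,c_2$.
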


The proof of Theorem \ref{thm:Levy_Hoelder} will be given in Section \ref{sec:Levy_Hoelder}. For both of the H\"older bounds \ref{it:Levy_Hoelder-1} and \ref{it:Levy_Hoelder-2}, we will consider an affine linear interpolation $(\rho_t)_{t\in[0,1]}$ between $\rho_0$ and $\rho_1$ for which the ``time derivative'' of $\G_{\rho_t}$ can be controlled. Along the way, we obtain estimates for the integrals $\frac{1}{\pi} \int^\infty_{-\infty} |\G_{\mu_{\rho_1}}(s+i\epsilon) - \G_{\mu_{\rho_0}}(s+i\epsilon)|\, ds$ for $\epsilon>0$ which we record in Corollary \ref{cor:integral_bounds}.

Theorem \ref{thm:Levy_Hoelder} can be seen as an extension of \cite[Proposition 2.3]{AEK2020} to the less restrictive setting of covariance maps that are $2$-positive, where the proximity of two densities of states is controlled ``globally'' in terms of the L\'evy distance. Due to its relaxed conditions, our result applies readily to the limit laws of Kronecker random matrices as discussed in \cite[Section 9]{AEK2020}; see also \cite{AEKN2019}.

The bounds in Theorem \ref{thm:Levy_Hoelder} \ref{it:Levy_Hoelder-2} and Corollary \ref{cor:integral_bounds} \ref{it:integral_bounds-2} appeared already in \cite[Theorem 3.5]{BM2021}, but only in the centered case and under much stronger assumptions regarding $\eta_0$ and $\eta_1$. While the proof given in \cite{BM2021} used some ``discrete interpolation'' stemming from the (noncommutative) Lindeberg method, we will obtain these results via some ``continuous interpolation'' as explained above.
For that purpose, it is crucial to understand how solutions $G_\eta$ of the Dyson equation \eqref{eq:Dyson_equation} depend on $\eta \in \P_2(B)$; this is the content of the next theorem, which is our second main result. Notice that $\overline{\bH^+_\gamma(B)} = \{b\in B \mid \Im(b) \geq \gamma \1\}$ for $\gamma>0$.

\begin{theorem}\label{thm:continuity}
Let $G: \P_2(B) \times \bH^+(B) \to \bH^-(B), (\eta,b) \mapsto G_\eta(b)$ be the function determined by the Dyson equation \eqref{eq:Dyson_equation}. We endow $\P_2(B) \times \bH^+(B)$ with the product topology. Then:
\begin{enumerate}
 \item\label{it:continuity-0} For every fixed $\gamma>0$, the family $G_{\boldsymbol{\cdot}}(b): \P_2(B) \to \bH^-(B), \eta \mapsto G_\eta(b)$ with $b\in \overline{\bH^+_\gamma(B)}$ is equicontinuous, i.e., we have
\begin{equation}\label{eq:continuity}
\lim_{\eta\to \eta_0} \sup_{b\in \overline{\bH^+_\gamma(B)}} \|G_\eta(b) - G_{\eta_0}(b)\| = 0 \qquad\text{for each $\eta_0\in \P_2(B)$}.
\end{equation}
In particular, $G: \P_2(B) \times \bH^+(B) \to \bH^-(B), (\eta,b) \mapsto G_\eta(b)$ is continuous.
 \item\label{it:continuity-1} For every fixed $\gamma>0$, the family $(D G_{\boldsymbol{\cdot}})(b): \P_2(B) \to \cL(B), \eta \mapsto (DG_\eta)(b)$ with $b\in \overline{\bH^+_\gamma(B)}$ is equicontinuous, i.e., we have
\begin{equation}\label{eq:continuity_derivative}
\lim_{\eta\to \eta_0} \sup_{b\in \overline{\bH^+_\gamma(B)}} \|(DG_\eta)(b) - (DG_{\eta_0})(b)\| = 0 \qquad\text{for each $\eta_0\in \P_2(B)$}.
\end{equation}
In particular, $DG: \P_2(B) \times \bH^+(B) \to \cL(B), (\eta,b) \mapsto (DG_\eta)(b)$ is continuous.
\end{enumerate}
\end{theorem}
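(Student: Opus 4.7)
The strategy is to treat \eqref{eq:Dyson_equation} in its equivalent form $G_\eta(b)=(b-\eta(G_\eta(b)))^{-1}$ and derive both assertions from a resolvent identity coupled with the invertibility of a stability operator. The $2$-positivity of $\eta$ enters crucially only in establishing the uniform-in-$b$ control of this stability operator, which I expect to be supplied by a preparatory result of the paper.

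\emph{A priori bound.} For $b\in\overline{\bH^+_\gamma(B)}$ and $\eta\in\P_2(B)$, since $G_\eta(b)\in\bH^-(B)$ and $\eta$ is positive, we have $\eta(\Im G_\eta(b))\leq 0$, so $\Im(b-\eta(G_\eta(b)))\geq\gamma\1$ and consequently $\|G_\eta(b)\|\leq 1/\gamma$ uniformly.

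\emph{Proof of \ref{it:continuity-0}.} Fix $\eta_0\in\P_2(B)$ and $\eta\in\P_2(B)$, and set $A_\star:=b-\eta_\star(G_{\eta_\star}(b))$. Combining the resolvent identity $A_\eta^{-1}-A_{\eta_0}^{-1}=A_\eta^{-1}(A_{\eta_0}-A_\eta)A_{\eta_0}^{-1}$ with the splitting $\eta(G_\eta(b))-\eta_0(G_{\eta_0}(b))=(\eta-\eta_0)(G_\eta(b))+\eta_0(G_\eta(b)-G_{\eta_0}(b))$ yields
\[
(\id-\Phi_{b,\eta,\eta_0})(G_\eta(b)-G_{\eta_0}(b))=G_\eta(b)\,(\eta-\eta_0)(G_\eta(b))\,G_{\eta_0}(b),
\]
where $\Phi_{b,\eta,\eta_0}(X):=G_\eta(b)\,\eta_0(X)\,G_{\eta_0}(b)$, and the right-hand side has norm at most $\gamma^{-3}\|\eta-\eta_0\|$ by the a priori bound. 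The main obstacle is to show that $\id-\Phi_{b,\eta,\eta_0}$ is invertible with $\|(\id-\Phi_{b,\eta,\eta_0})^{-1}\|$ bounded uniformly for $b\in\overline{\bH^+_\gamma(B)}$ and locally uniformly in $\eta$ near $\eta_0$. This is the $2$-positive analogue of the Alt--Erd\H os--Kr\"uger stability operator; the ingredients I would expect are the imaginary-part identity $(\id-\tilde\Psi_{b,\eta_0})(-\Im G_{\eta_0}(b))=G_{\eta_0}(b)^\ast\,\Im(b)\,G_{\eta_0}(b)$, where $\tilde\Psi_{b,\eta_0}(X):=G_{\eta_0}(b)^\ast\eta_0(X)G_{\eta_0}(b)$ (obtained by taking imaginary parts in \eqref{eq:Dyson_equation}), a Kadison--Schwarz-type inequality available because $\eta_0$ is $2$-positive, and a perturbation step passing to nearby $\eta$. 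Granting this, \eqref{eq:continuity} is immediate.

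\emph{Proof of \ref{it:continuity-1} and joint continuity.} Differentiating the Dyson equation in $b$ shows, with $\Psi_{b,\eta}(X):=G_\eta(b)\eta(X)G_\eta(b)$,
\[
(\id-\Psi_{b,\eta})\bigl((DG_\eta)(b)[h]\bigr)=-G_\eta(b)\,h\,G_\eta(b)\qquad(h\in B),
\]
so $(DG_\eta)(b)[h]=-(\id-\Psi_{b,\eta})^{-1}\bigl(G_\eta(b)\,h\,G_\eta(b)\bigr)$. The second resolvent identity for $(\id-\Psi_{b,\eta})^{-1}-(\id-\Psi_{b,\eta_0})^{-1}$, combined with $\|\Psi_{b,\eta}-\Psi_{b,\eta_0}\|\to 0$ uniformly in $b$ as $\eta\to\eta_0$ (which follows from \ref{it:continuity-0} and the a priori bound), yields \eqref{eq:continuity_derivative}. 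Finally, joint continuity of $G$ and $DG$ on $\P_2(B)\times\bH^+(B)$ follows by combining the uniform-in-$b$ equicontinuity in $\eta$ with the norm-continuity of $b\mapsto G_{\eta_0}(b)$ and $b\mapsto(DG_{\eta_0})(b)$ on $\bH^+(B)$, a consequence of holomorphy.
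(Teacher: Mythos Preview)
Your resolvent identity and the differentiated Dyson equation are both correct, but the argument has a genuine gap precisely at the step you yourself flag as ``the main obstacle'': the uniform-in-$b$ bound on $\|(\id-\Phi_{b,\eta,\eta_0})^{-1}\|$ (or on $\|(\id-\Psi_{b,\eta_0})^{-1}\|$) over $\overline{\bH^+_\gamma(B)}$. You expect this to be supplied by a preparatory result of the paper, but it is not. The imaginary-part identity you wrote is correct and is indeed the starting point of the Alt--Erd\H{o}s--Kr\"uger stability analysis; however, extracting from it a spectral gap for the positive map $\tilde\Psi_{b,\eta_0}$ typically relies on additional structure (flatness, finite dimension, tracial $W^\ast$-setting) that the present paper explicitly drops. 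In the bare $C^\ast$-setting with only $2$-positivity, no such bound is established here, and your second-resolvent argument for part~\ref{it:continuity-1} inherits the same gap: it needs a uniform operator-norm bound on $(\id-\Psi_{b,\eta})^{-1}$, not merely the bound on the composite $(\id-\Psi_{b,\eta_0})^{-1}\bigl(G_{\eta_0}\,\cdot\,G_{\eta_0}\bigr)=-(DG_{\eta_0})(b)$ that one can read off from $\|(DG_{\eta_0})(b)\|\leq\gamma^{-2}$.

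The paper avoids the stability operator entirely. Its route is via \emph{local subordination}: the map $\Psi_{\eta_0}(w):=w^{-1}+\eta_0(w)$ is a local right inverse of $G_{\eta_0}$ (Proposition~\ref{prop:opval_semicircular_approximation}), and for $\eta$ near $\eta_0$ the composite $\omega(b):=\Psi_{\eta_0}(G_\eta(b))$ lands back in $\bH^+(B)$ with $G_{\eta_0}(\omega(b))=G_\eta(b)$ and $b-\omega(b)=(\eta-\eta_0)(G_\eta(b))$, so that $\|\omega(b)-b\|\leq\gamma^{-1}\|\eta-\eta_0\|$ uniformly in $b$ (Proposition~\ref{prop:local_subordination}). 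The difference $G_\eta(b)-G_{\eta_0}(b)=G_{\eta_0}(\omega(b))-G_{\eta_0}(b)$ is then controlled by the Lipschitz bound $\|G_{\eta_0}(b_1)-G_{\eta_0}(b_0)\|\leq\|\Im(b_0)^{-1}\|\|\Im(b_1)^{-1}\|\|b_1-b_0\|$ (Lemma~\ref{lem:Lipschitz_bound}), yielding $\sup_{b\in\overline{\bH^+_\gamma(B)}}\|G_\eta(b)-G_{\eta_0}(b)\|\lesssim\gamma^{-3}\|\eta-\eta_0\|$ (Proposition~\ref{prop:local_comparison}). The $2$-positivity enters through that Lipschitz bound, and not via Kadison--Schwarz: since $\eta_0\in\P_2(B)$, the amplified solution $G_{\eta_0}^{(2)}$ on $M_2(B)$ exists and, evaluated at upper-triangular points, encodes $(DG_{\eta_0})(b)$; the constraint $G_{\eta_0}^{(2)}\in\bH^-(M_2(B))$ then forces $\|(DG_{\eta_0})(b)\|\leq\|\Im(b)^{-1}\|^2$ (Proposition~\ref{prop:opval_semicircular_Cauchy_matrix} and Lemma~\ref{lem:Cauchy_derivatives_bound}). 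Part~\ref{it:continuity-1} is obtained by differentiating the subordination identity and combining the bound on $\|(D\omega)(b_0)-\id_B\|$ with a Lipschitz bound on $DG_{\eta_0}$ (Lemma~\ref{lem:Lipschitz_bound_derivative}). Your final paragraph, deducing joint continuity from equicontinuity in $\eta$ plus continuity in $b$, matches the paper.
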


The proof of Theorem \ref{thm:continuity} will be given in Section \ref{sec:Burgers_equation}.

Our third main result given in the following theorem says that the evolution of solutions of the Dyson equation \eqref{eq:Dyson_equation} along $C^1$-paths $\eta: [0,T] \to \P_2(B)$ can be described by an operator-valued version of the inviscid Burgers equation \eqref{eq:Burgers}.

\begin{theorem}\label{thm:Burgers}
Let $\eta: [0,T] \to \P_2(B), t\mapsto \eta_t$ be a $C^1$-path. Let $G: [0,T] \times \bH^+(B) \to \bH^-(B), (t,b) \mapsto G_t(b)$ be the function defined by $G_t(b) := G_{\eta_t}(b)$ for all $t \in [0,T]$ and $b\in \bH^+(B)$. We endow $[0,T] \times \bH^+(B)$ with the product topology. Then, for every fixed $\gamma>0$, the family $G_{\boldsymbol{\cdot}}(b): [0,T] \to \bH^-(B)$ with $b\in \overline{\bH^+_\gamma(B)}$ is equidifferentiable, i.e., we have
\begin{equation}\label{eq:differentiability}
\lim_{t\to t_0} \sup_{b\in \overline{\bH^+_\gamma(B)}} \Big\|\frac{1}{t-t_0} \big(G_{t}(b) - G_{t_0}(b)\big) - \dot{G}_{t_0}(b)\Big\| = 0 \qquad\text{for each $t_0\in [0,T]$}.
\end{equation}
Moreover, $G$ satisfies the partial differential equation
\begin{equation}\label{eq:Burgers}
\dot{G}_t(b) = - (D G_t)(b)\big(\dot{\eta}_t(G_t(b))\big) \qquad\text{for $(t,b) \in [0,T] \times \bH^+(B)$}.
\end{equation}
In particular, $\dot{G}: [0,T] \times \bH^+(B) \to B, (t,b) \mapsto \dot{G}_t(b)$ is continuous and $\dot{G}_t: \bH^+(B) \to B$ is holomorphic for every fixed $t\in[0,T]$.
\end{theorem}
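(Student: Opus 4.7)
The plan is to derive the Burgers equation by differencing the Dyson equations at times $t$ and $t_0$, identify the leading term via the explicit linearization of $G_\eta$ in $b$, and then pass to the limit $t \to t_0$ uniformly in $b \in \overline{\bH^+_\gamma(B)}$ using Theorem \ref{thm:continuity}.

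First I would subtract the two Dyson equations and reorganize to obtain
\begin{equation*}
T_{t_0}(b)[G_t(b) - G_{t_0}(b)] = (\eta_t - \eta_{t_0})(G_t(b))\, G_t(b) + \eta_{t_0}\bigl(G_t(b) - G_{t_0}(b)\bigr)\bigl(G_t(b) - G_{t_0}(b)\bigr),
\end{equation*}
where $T_{t_0}(b)[v] := bv - \eta_{t_0}(G_{t_0}(b))\, v - \eta_{t_0}(v)\, G_{t_0}(b)$ is the linearization of the Dyson equation in the unknown $g$. Differentiating the Dyson equation in $b$ instead yields the identity $(DG_{t_0})(b)[h] = -T_{t_0}(b)^{-1}[h\, G_{t_0}(b)]$ for every $h \in B$. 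Splitting $G_t(b) = G_{t_0}(b) + (G_t(b) - G_{t_0}(b))$ inside the first summand of the displayed equation and applying $T_{t_0}(b)^{-1}$ produces the decomposition
\begin{equation*}
G_t(b) - G_{t_0}(b) = -(DG_{t_0})(b)\bigl[(\eta_t - \eta_{t_0})(G_t(b))\bigr] + R(t, t_0, b),
\end{equation*}
in which $R(t, t_0, b)$ gathers two quadratic remainders, each carrying a factor of $G_t(b) - G_{t_0}(b)$ beyond the naive linear order.

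The next step is to control $R(t, t_0, b)$ uniformly in $b \in \overline{\bH^+_\gamma(B)}$. Theorem \ref{thm:continuity} \ref{it:continuity-0} gives $\sup_{b} \|G_t(b) - G_{t_0}(b)\| \to 0$ as $t \to t_0$; Theorem \ref{thm:continuity} \ref{it:continuity-1} furnishes a uniform bound on $\|(DG_{t_0})(b)\|$, and via the formula above also on the norm of $T_{t_0}(b)^{-1}$ on the subspace relevant here; combined with the a priori bound $\|G_\eta(b)\| \leq \gamma^{-1}$ and the $C^1$-estimate $\|\eta_t - \eta_{t_0} - (t - t_0)\dot{\eta}_{t_0}\| = o(|t - t_0|)$, this yields $\sup_{b} \|R(t, t_0, b)\|/|t - t_0| \to 0$. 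Dividing the previous display by $t - t_0$ and letting $t \to t_0$, the main term converges uniformly in $b$ to $-(DG_{t_0})(b)[\dot{\eta}_{t_0}(G_{t_0}(b))]$, establishing the equidifferentiability \eqref{eq:differentiability} and the Burgers equation \eqref{eq:Burgers} simultaneously. Joint continuity of $\dot{G}$ then follows from the Burgers formula together with Theorem \ref{thm:continuity} and continuity of $t \mapsto \dot{\eta}_t$, while holomorphicity of $\dot{G}_t$ in $b$ is inherited from the holomorphicity of $G_t$ and of $(DG_t)(b)$ in $b$.

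The main obstacle I anticipate is making precise the use of $T_{t_0}(b)^{-1}$: the immediately available formula $T_{t_0}(b)^{-1}[h\, G_{t_0}(b)] = -(DG_{t_0})(b)[h]$ describes the inverse only on the range of right-multiplication by $G_{t_0}(b)$. One therefore has to either establish full invertibility of $T_{t_0}(b)$ on all of $B$ with a uniform norm bound on $\overline{\bH^+_\gamma(B)}$, or verify that every element to which $T_{t_0}(b)^{-1}$ is applied in the argument above lies in $B \cdot G_{t_0}(b)$, so that the explicit formula and the uniform control from Theorem \ref{thm:continuity} \ref{it:continuity-1} suffice.
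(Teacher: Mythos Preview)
Your approach via direct linearization of the Dyson equation is genuinely different from the paper's. The paper instead uses a subordination identity: from Propositions~\ref{prop:opval_semicircular_approximation} and~\ref{prop:local_subordination} one has $G_t(b_0) = G_{t_0}(b_{0,t})$ with $b_0 - b_{0,t} = (\eta_t - \eta_{t_0})(G_t(b_0))$ for $t$ near $t_0$, converting the time difference into a purely spatial increment of $G_{t_0}$. The uniform Taylor estimate of Lemma~\ref{lem:Taylor_bound_Cauchy} then yields the leading term $-(DG_{t_0})(b_0)[(\eta_t - \eta_{t_0})(G_t(b_0))]$ with a remainder that is automatically uniform in $b_0 \in \overline{\bH^+_\gamma(B)}$, since $\|b_{0,t} - b_0\| \leq \gamma^{-1}\|\eta_t-\eta_{t_0}\|$; no analysis of $T_{t_0}(b)^{-1}$ is required.

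Your route has a real gap in the uniformity, and it sits exactly where you suspect. Invertibility of $T_{t_0}(b)$ is not the obstacle: the Dyson equation gives $\Psi_{\eta_{t_0}}(G_{t_0}(b)) = b$, and differentiating this together with your identity $T_{t_0}(b)\circ (DG_{t_0})(b) = -R_{G_{t_0}(b)}$ shows $T_{t_0}(b)^{-1}[w] = -(DG_{t_0})(b)[w\,G_{t_0}(b)^{-1}]$ on all of $B$. The problem is that $\|G_{t_0}(b)^{-1}\| = \|b - \eta_{t_0}(G_{t_0}(b))\|$ is unbounded on $\overline{\bH^+_\gamma(B)}$, so $\|T_{t_0}(b)^{-1}\|$ admits no uniform bound, and your second alternative is vacuous as stated because $G_{t_0}(b)$ is invertible and hence $B\cdot G_{t_0}(b)=B$. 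What you actually need is uniform control of the pre-factor $h$ in $w = h\,G_{t_0}(b)$ for the remainder terms, which boils down to bounding $\|(G_t(b)-G_{t_0}(b))\,G_{t_0}(b)^{-1}\|$ uniformly in $b$. This can be done via the identity $(G_t - G_{t_0})G_{t_0}^{-1} = G_t\bigl[(\eta_t-\eta_{t_0})(G_t) + \eta_{t_0}(G_t - G_{t_0})\bigr]$, read off from the Dyson equations for $G_t$ and $G_{t_0}$, but that step is missing from your sketch and is what makes the argument close.
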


In the scalar-valued framework of \cite{Voi1986}, Voiculescu derived the inviscid Burgers equation as the free probability counterpart of the heat equation. An operator-valued generalization of his result (see also \cite{Voi1995}) was obtained by Anshelevich, Belinschi, Fevrier, and Nica in \cite{ABFN2013}, but under more restrictive assumptions. We prove Theorem \ref{thm:Burgers} in Section \ref{sec:Burgers_equation}.

The rest of this paper is organized as follows.

In Section \ref{sec:preliminaries}, we set up the notation and we recall some fundamental results from different areas that will be used in the subsequent parts; concretely, Section \ref{subsec:positive_maps} is about positive and completely positive maps on $C^\ast$-algebras, Section \ref{subsec:holomorphy} gives a brief survey on holomorphic functions between (subsets of) Banach spaces, Section \ref{subsec:Cauchy_transforms} focuses on Cauchy transforms and their characterization, and Section \ref{subsec:measure_distances} recalls the definition of the L\'evy distance and explains how this quantity can be controlled in terms of the respective Cauchy transforms.

Section \ref{sec:Dyson-density_of_states} has two purposes. On the one hand, we want to describe the construction of the density of states $\mu: B_\sa \times \P(B) \to \Prob(\R)$, which is the main object of this paper. In doing so, we follow \cite{AEK2020} but we generalize their construction to $C^\ast$-probability spaces $(B,\phi)$, where $\phi$ does not need to be tracial, and to positive maps $\eta$ defined on $B$, which are not necessarily selfadjoint with respect to the distinguished state $\phi$. The definition of the density of states $\mu: B_\sa \times \P(B) \to \Prob(\R)$ will be given in Section \ref{subsec:density_of_states}, while Section \ref{subsec:Dyson} provides the results about the Dyson equation which justify this construction; as for the latter, we build on \cite{HRFS2007} but we add some explanation why solutions of the Dyson equation are holomorphic (and in fact uniformly Fr\'echet differentiable on $\overline{\bH^+_\gamma(B)}$ for every $\gamma>0$ as shown in Lemma \ref{lem:Taylor_bound_Cauchy}).
On the other hand, we undertake in Section \ref{sec:Dyson-density_of_states} a detailed study of properties of solutions of the Dyson equation, especially in the case of $2$-positive maps $\eta$. To this end, we consider in Section \ref{subsec:Dyson_amplification} amplifications of the Dyson equation. More precisely, we show in Proposition \ref{prop:opval_semicircular_Cauchy_matrix} that for an $n$-positive map $\eta: B \to B$ the solutions $(G_{\eta^{(k)}})_{k=1}^n$ for the amplifications $\eta^{(k)}: M_k(B) \to M_k(B)$ for $k=1,\dots,n$ glue to a noncommutative function of height $n$; this terminology is outlined in the Appendix \ref{sec:FreeNCFunctionTheory}. In Section \ref{subsec:Dyson_2-positive}, we focus on $2$-positive linear maps $\eta$ and derive in Lemma \ref{lem:Cauchy_derivatives_bound} with the help of Proposition \ref{prop:opval_semicircular_Cauchy_matrix} bounds for the Fr\'echet derivative $D G_\eta$ of $G_\eta$, which enable us to establish Lipschitz continuity of $G_\eta: \bH^+(B) \to \bH^-(B)$ and of $D G_\eta: \bH^+(B) \to \cL(B)$ on each $\overline{\bH^+_\gamma(B)}$ for $\gamma>0$; see Lemma \ref{lem:Lipschitz_bound} and Lemma \ref{lem:Lipschitz_bound_derivative}.

Section \ref{sec:Burgers_equation} is devoted to the proofs of Theorem \ref{thm:continuity} and of Theorem \ref{thm:Burgers}. Finally, with the help of these results, the proof of Theorem \ref{thm:Levy_Hoelder} will be given in Section \ref{sec:Levy_Hoelder}.

\section{Preliminaries}\label{sec:preliminaries}

\subsection{Positive and completely positive maps}\label{subsec:positive_maps}

For complex Banach spaces $(E,\|\cdot\|_E)$ and $(F,\|\cdot\|_F)$, we denote by $\cL(E,F)$ the Banach space of all bounded linear operators $T$ from $E$ to $F$ with the norm given by $\|T\| := \sup_{\|x\|_E \leq 1} \|T x\|_F$. We abbreviate $\cL(E) := \cL(E,E)$. 

For a unital $C^\ast$-algebra $B$, whose unit we denote by $\1$, we let $B_\sa := \{b\in B \mid b^\ast = b\}$ be the real subspace of $B$ consisting of all selfadjoint elements and we let $B_+ := \{b\in B \mid b \geq 0\}$ be the convex cone of all positive elements in $B$.
For every $k\in\N$, we denote by $M_k(B)$ the unital $C^\ast$-algebra of all $k \times k$ matrices over $B$; the unit in $M_k(B)$ will be denoted by $\1_k$.

A map $\eta: B \to B$ is said to be \emph{positive} if it is $\C$-linear and maps $B_+$ into itself. Note that every such $\eta$ is automatically bounded, i.e., it belongs to $\cL(B)$; in fact, we have $\|\eta\| = \|\eta(\1)\|$ (see, for instance, Corollary 2.9 in \cite{Paulsen2002}). The set of all positive maps $\eta: B \to B$, which will be denoted by $\P(B)$, thus forms a convex subset of $\cL(B)$.
Further, for every $k\in\N$, we denote by $\P_k(B)$ the subset of $\P(B)$ consisting of those $\eta$ which are \emph{$k$-positive}, i.e., for which the amplification $\eta^{(k)}: M_k(B) \to M_k(B)$, given by $\eta^{(k)}(b) := (\eta(b_{ij}))_{i,j=1}^k$ for every matrix $b=(b_{ij})_{i,j=1}^k \in M_k(B)$, are positive in the aforementioned sense. Clearly,
$$\P(B) = \P_1(B) \supseteq \dots \supseteq \P_k(B) \supseteq \P_{k+1}(B) \supseteq \dots \supseteq \P_\infty(B),$$
where $\P_\infty(B)$ denotes the subset of $\P(B)$ consisting of those $\eta \in \P(B)$ which are \emph{completely positive}, i.e., which belong to $\P_k(B)$ for every $k\in\N$; in other words, $\P_\infty(B) := \bigcap_{k\geq 1} \P_k(B)$. Notice that $\P_k(B)$, for every $k\in\N$, and likewise $\P_\infty(B)$ form convex subsets of $\cL(B)$.

As shown in \cite{Choi1972}, each of the inclusions in the above chain can be strict depending on $B$; more precisely, \cite[Theorem 1]{Choi1972} says that we have $\P_{n-1}(M_n(\C)) \supsetneq \P_n(M_n(\C))$. On the other hand, \cite[Theorem 5]{Choi1972} tells us that $\P_n(M_n(\C)) = \P_\infty(M_n(\C))$. In particular, already on $M_3(\C)$ there are $2$-positive maps which are not completely positive; a concrete example that was given in \cite{Choi1972} is $\eta: M_3(\C) \to M_3(\C)$ defined by $\eta(b) := 2 \Tr_3(b) \1_3 - b$. In view of these facts, our results on the operator-valued inviscid Burgers equation provide a significant extension of the previous work done in this direction.

\subsection{Holomorphic functions between Banach spaces}\label{subsec:holomorphy}

In this section, we give a survey on notions of holomorphy for functions between subsets of complex Banach spaces and we briefly recall some of their properties which will be used in the sequel. For more details, we refer the interested readers to the monographs \cite{HillePhillips1974, Dineen1999}.

In the following, let $(E,\|\cdot\|_E)$ and $(F,\|\cdot\|_F)$ be complex Banach spaces.

Recall that a function $f: U\to F$ on an open subset $\emptyset \neq U \subseteq E$ is said to be \emph{G\^ateaux holomorphic on $U$} if the limit, called the \emph{G\^ateaux derivative} of $f$ at $x$ in the direction $h$,
$$\delta f(x;h) := \lim_{\substack{z\rightarrow 0\\ z\in U(x;h)\backslash\{0\}}} \frac{1}{z}(f(x+zh)-f(x))$$
exists for all $x\in U$ and $h\in E$, where $U(x;h) := \{z\in\C \mid x+zh\in U\}$.
According to \cite[Lemma 3.3]{Dineen1999}, a function $f: U \to F$ is G\^ateaux holomorphic if and only if, for every choice of $x\in U$ and $h\in E$ and each bounded linear functional $\phi: F \to \C$, the $\C$-valued function $U(x;h) \ni z \mapsto \phi(f(x+zh))$ is holomorphic in the sense of one-variable complex analysis.

Further, we recall that a function $f: U \to F$ is said to be \emph{Fr\'echet holomorphic on $U$}, if
\begin{enumerate}
\item\label{it:Frechet_Gateaux} it is G\^ateaux holomorphic on $U$,
\item\label{it:Frechet_bounded} its G\^ateaux derivative $\delta f(x;\cdot)$ at any point $x\in U$ is a bounded linear operator, and
\item\label{it:Frechet_limit} $\displaystyle\lim_{h \rightarrow 0} \frac{1}{\|h\|_E} \|f(x+h)-f(x)-\delta f(x;h)\|_F = 0$.
\end{enumerate}
In this case, we write $Df(x)$ for the bounded linear operator given by the G\^ateaux derivative $\delta f(x;\cdot)$ and call it the \emph{Fr\'echet derivative} of $f$ at the point $x\in U$.
It can be shown (see (2.3) in \cite{Zorn1945}) that $\delta f(x;\cdot)$ is always linear; therefore, condition \ref{it:Frechet_bounded} above reduces to the requirement that $\delta f(x;\cdot)$ is bounded. A result of Zorn \cite{Zorn1946} says that condition \ref{it:Frechet_limit} is redundant. 

It is an immediate consequence of the definitions that every Fr\'echet holomorphic function $f: U \to F$ is G\^ateaux holomorphic and continuous. In particular, every Fr\'echet holomorphic function is holomorphic in the following sense (see \cite{Harris2003}): a function $f: U \to F$ is said to be \emph{holomorphic} if it is G\^ateaux holomorphic on $U$ and \emph{locally bounded on $U$}, i.e., for every $x_0\in U$ there exists $r=r(x_0)>0$ such that
$$\sup_{x\in U\colon \|x-x_0\|_E < r} \|f(x)\|_F < \infty.$$
Remarkably, it turns out that conversely every holomorphic function $f: U \to F$ necessarily is Fr\'echet holomorphic; see \cite[Theorem 3.17.1]{HillePhillips1974}. In other words, holomorphy and Fr\'echet holomorphy are equivalent concepts. The following lemma is a slight generalization of some key arguments in the proof of the aforementioned fact, which we record for later use. 
To shorten the notation, we set $\B_E(x_0,r) := \{x\in E \mid \|x-x_0\|_E < r\}$ for $x_0 \in E$ and $r>0$.

\begin{lemma}\label{lem:Cauchy_Taylor_estimates}
Let $f: U \to F$ be a G\^ateaux holomorphic function. Suppose that $x_0 \in U$ and $r>0$ are such that $\B_E(x_0,r) \subseteq U$ and $M_f(x_0,r) := \sup_{x\in \B_E(x_0,r)} \|f(x)\|_F < \infty$. Then the following statements hold true:
\begin{enumerate}
 \item\label{it:Cauchy_estimate} For every $h \in E$, we have that $\|\delta f(x_0; h)\|_F \leq \frac{1}{r} M_f(x_0,r) \|h\|_E$.
 \item\label{it:Taylor_estimate} For $h\in \B_E(0,r)$, we have $\|f(x_0+h) - f(x_0) - \delta f(x_0; h)\|_F \leq \frac{M_f(x_0,r)}{r(r-\|h\|_E)} \|h\|_E^2.$
\end{enumerate}
\end{lemma}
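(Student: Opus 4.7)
The plan is to reduce both inequalities to the classical single-variable Cauchy estimates by composing with bounded linear functionals on $F$. The case $h=0$ is trivial, so assume $h\neq 0$. For each $\phi\in F'$ with $\|\phi\|\leq 1$, consider the scalar function $g_\phi(z) := \phi(f(x_0+zh))$. The inclusion $\B_E(x_0,r)\subseteq U$ forces the open disk $D_h := \{z\in\C\mid |z|<r/\|h\|_E\}$ to lie in $U(x_0;h)$, and by the characterization of G\^ateaux holomorphy recorded from \cite[Lemma 3.3]{Dineen1999}, the map $g_\phi$ is holomorphic on $D_h$. Moreover, for any $0<\rho<r/\|h\|_E$ and any $z$ with $|z|=\rho$, one has $x_0+zh\in\B_E(x_0,r)$, so that $|g_\phi(z)|\leq M_f(x_0,r)$.

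For part \ref{it:Cauchy_estimate}, I would apply Cauchy's integral formula for the first derivative at $0$:
$$\phi(\delta f(x_0;h)) = g_\phi'(0) = \frac{1}{2\pi i}\oint_{|z|=\rho} \frac{g_\phi(z)}{z^2}\, dz,$$
whence a standard $ML$-estimate gives $|\phi(\delta f(x_0;h))|\leq M_f(x_0,r)/\rho$. Letting $\rho\uparrow r/\|h\|_E$ and then taking the supremum over $\phi$ in the unit ball of $F'$ (Hahn--Banach) produces $\|\delta f(x_0;h)\|_F \leq M_f(x_0,r)\|h\|_E/r$.

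For part \ref{it:Taylor_estimate}, the hypothesis $h\in\B_E(0,r)$ lets me pick $\rho$ with $1<\rho<r/\|h\|_E$. I would then combine the three Cauchy formulas
$$g_\phi(1) = \frac{1}{2\pi i}\oint_{|z|=\rho}\frac{g_\phi(z)}{z-1}\,dz,\qquad g_\phi(0) = \frac{1}{2\pi i}\oint_{|z|=\rho}\frac{g_\phi(z)}{z}\,dz,\qquad g_\phi'(0) = \frac{1}{2\pi i}\oint_{|z|=\rho}\frac{g_\phi(z)}{z^2}\,dz$$
with the elementary partial-fraction identity $\frac{1}{z-1}-\frac{1}{z}-\frac{1}{z^2} = \frac{1}{z^2(z-1)}$ (valid on $|z|>1$) to arrive at the representation
$$g_\phi(1)-g_\phi(0)-g_\phi'(0) = \frac{1}{2\pi i}\oint_{|z|=\rho}\frac{g_\phi(z)}{z^2(z-1)}\,dz.$$
Using $|z-1|\geq \rho-1$ on the contour yields $|g_\phi(1)-g_\phi(0)-g_\phi'(0)|\leq M_f(x_0,r)/[\rho(\rho-1)]$; passing to the limit $\rho\uparrow r/\|h\|_E$ and invoking Hahn--Banach one last time produces the claimed Taylor bound.

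I do not anticipate any real obstacle here: the argument is essentially a Banach-space lift of the familiar Cauchy estimates. The only subtlety worth highlighting is the contour selection in part \ref{it:Taylor_estimate}, which must be large enough to encircle both residues at $0$ and $1$, yet must stay inside $D_h$ so that the uniform bound $|g_\phi|\leq M_f(x_0,r)$ remains in force; the sharp constants arise precisely by letting $\rho$ tend to its upper endpoint $r/\|h\|_E$.
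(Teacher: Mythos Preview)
Your argument is correct and is exactly the standard reduction to the one-variable Cauchy estimates via bounded linear functionals; the paper does not give its own proof of this lemma but merely records it as a slight generalization of the key steps in the proof of \cite[Theorem~3.17.1]{HillePhillips1974}, and your write-up is precisely that argument made explicit.
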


The different characterizations of holomorphy are useful to carry over results from one-variable complex analysis. The next lemma, for instance, is a consequence of Vitali's theorem; it is contained in \cite[Theorem 3.18.1]{HillePhillips1974}, but since the proof simplifies significantly in the particular situation that we are considering here, we include the argument for the sake of clarity.
Recall that a family $\F$ of functions $f: U \to F$ is called \emph{locally uniformly bounded} if for every $x_0\in U$ there exists $r=r(x_0)>0$ such that
$$\sup_{f\in\F}\ \sup_{x\in U\colon \|x-x_0\|_E < r} \|f(x)\|_F < \infty.$$

\begin{lemma}\label{lem:holomorphy_criterion}
Let $(f_n)_{n=1}^\infty$ be a sequence of G\^ateaux holomorphic functions $f_n: U \to F$ which is locally uniformly bounded and pointwise convergent to a function $f: U \to F$. Then $f$ is (Fr\'echet) holomorphic.
\end{lemma}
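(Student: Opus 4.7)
The plan is to reduce to the one-variable Vitali theorem. Since the excerpt has already recorded that holomorphy and Fr\'echet holomorphy are equivalent (via \cite{HillePhillips1974}), it suffices to prove that $f$ is \emph{holomorphic} in the sense used in Subsection \ref{subsec:holomorphy}, i.e., G\^ateaux holomorphic on $U$ and locally bounded on $U$.

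The local boundedness of $f$ is essentially free. Given $x_0 \in U$, choose $r = r(x_0) > 0$ such that $\B_E(x_0,r) \subseteq U$ and $C := \sup_{n}\sup_{x \in \B_E(x_0,r)} \|f_n(x)\|_F < \infty$ by the locally uniform boundedness hypothesis. For every $x \in \B_E(x_0,r)$, pointwise convergence $f_n(x) \to f(x)$ combined with the triangle inequality immediately yields $\|f(x)\|_F \leq C$, so $f$ is locally bounded on $U$.

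For G\^ateaux holomorphy, I would invoke the characterization of G\^ateaux holomorphy recalled just before Lemma \ref{lem:Cauchy_Taylor_estimates}: it suffices to show, for every fixed $x \in U$, $h \in E$, and every bounded linear functional $\phi \colon F \to \C$, that the scalar function
$$g\colon U(x;h) \to \C,\qquad g(z) := \phi(f(x+zh))$$
is holomorphic in the one-variable sense. Set $g_n(z) := \phi(f_n(x+zh))$. Since each $f_n$ is G\^ateaux holomorphic, each $g_n$ is holomorphic on the open set $U(x;h) \subseteq \C$, and $g_n \to g$ pointwise. The locally uniform boundedness of $(f_n)$ transfers, via $|g_n(z)| \leq \|\phi\| \|f_n(x+zh)\|_F$, to locally uniform boundedness of the sequence $(g_n)$ on $U(x;h)$.

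At this point the one-variable Vitali theorem (equivalently Montel's theorem together with uniqueness of limits of subsequences) applies and yields that $(g_n)$ converges locally uniformly on $U(x;h)$ to a holomorphic function, which must be $g$. Thus $g$ is holomorphic for all choices of $x$, $h$, and $\phi$, so $f$ is G\^ateaux holomorphic on $U$. Combined with the local boundedness established above, this shows $f$ is holomorphic and hence Fr\'echet holomorphic. The proof contains no real obstacle; the only point requiring care is the explicit reduction to scalar-valued holomorphy via bounded linear functionals, so that the classical Vitali theorem can be invoked without appealing to a Banach-space-valued version.
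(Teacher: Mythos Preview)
Your proof is correct and follows essentially the same route as the paper's own argument: both reduce to the one-variable Vitali theorem via the characterization of G\^ateaux holomorphy through bounded linear functionals, and both deduce local boundedness of $f$ directly from the locally uniform boundedness of $(f_n)$ together with pointwise convergence. The only cosmetic difference is that the paper treats G\^ateaux holomorphy first and local boundedness second, whereas you reverse that order.
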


\begin{proof}
Let us fix $x\in U$, $h\in E$, as well as some bounded linear functional $\phi: F \to \C$. Since each $f_n$ is G\^ateaux holomorphic on $U$, we obtain holomorphic functions $g_n: U(x;h) \to \C$ by $g_n(z) := \phi(f_n(x+zh))$; since $(f_n)_{n=1}^\infty$ is locally uniformly bounded on $U$ and converges pointwise on $U$ to $f$, we infer that $(g_n)_{n=1}^\infty$ is locally uniformly bounded on $U(x;h)$ and pointwise convergent on $U(x;h)$ to the function $g: U(x;h) \to \C$ which is given by $g(z) := \phi(f(x+zh))$. By Vitali's theorem, it follows that $(g_n)_{n=1}^\infty$ converges locally uniformly to $g$, so that $g$ must be holomorphic. This verifies that $f$ is G\^ateaux holomorphic on $U$. Because $(f_n)_{n=1}^\infty$ is locally uniformly bounded and pointwise convergent to $f$ by assumption, it follows that $f$ is locally bounded. In summary, we have that $f$ is (Fr\'echet) holomorphic.
\end{proof}

\subsection{Cauchy transforms}\label{subsec:Cauchy_transforms}

We denote by $\Prob(\R)$ the set of all Borel probability measures on the real line $\R$. For every $\mu \in \Prob(\R)$, let $\G_\mu$ be the \emph{Cauchy transform} of $\mu$, i.e., the holomorphic function
$$\G_\mu:\ \C^+ \to \C^-, \quad z\mapsto \int_\R \frac{1}{z-t}\, d\mu(t)$$
which is defined on the complex upper half-plane $\C^+ := \{z\in\C \mid \Im(z) > 0\}$ and takes its values in the complex lower half-plane $\C^- := \{z \in \C \mid \Im(z) < 0\}$.

For later use, we recall the following well-known characterization of Cauchy transforms. We refer the readers to \cite{GH2003} for an excellent survey on this subject; in particular, the statement given below is \cite[Lemma 2]{GH2003}, except that the authors work with the \emph{Stieltjes transform} $\S_\mu: \C^+ \to \C^+$ instead of the Cauchy transform $\G_\mu: \C^+ \to \C^-$, which differ only by sign.

\begin{theorem}\label{thm:Cauchy_characterization}
Let $\G: \C^+ \to \C$ be holomorphic. The following statements are equivalent:
\begin{enumerate}
 \item\label{it:Cauchy_characterization_measure} $\G$ is the Cauchy transform of a Borel probability measure on $\R$, i.e., there exists some $\mu \in \Prob(\R)$ such that $\G(z) = \G_\mu(z)$ for all $z\in\C^+$.
 \item $\G$ satisfies the conditions $\Im(\G(z)) > 0$ for all $z\in \C^+$ and $\lim_{y\to \infty} iy \G(iy) = 1$.
\end{enumerate}
\end{theorem}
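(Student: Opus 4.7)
The direction (i) $\Rightarrow$ (ii) is a direct computation. For $\mu \in \Prob(\R)$ and $z = x + iy \in \C^+$ one finds $\Im \G_\mu(z) = -\int_\R y/|z-t|^2\, d\mu(t)$ (so the condition in (ii) should be read in the sign convention appropriate for $\G_\mu: \C^+ \to \C^-$), and $iy\, \G_\mu(iy) = \int_\R (1 - t/(iy))^{-1}\, d\mu(t) \to 1$ by dominated convergence, the integrand being bounded by $1$ and pointwise convergent to $1$.

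For the hard direction (ii) $\Rightarrow$ (i), the plan is to construct $\mu$ as a weak limit of Stieltjes--inverted mollifications of $\G$. Concretely, for each $\epsilon > 0$ set
\[
d\mu_\epsilon(s) := \tfrac{1}{\pi}\, \bigl|\Im \G(s + i\epsilon)\bigr|\, ds,
\]
which is a nonnegative Borel measure by the sign hypothesis in (ii). First I would show $\G_{\mu_\epsilon}(z) = \G(z + i\epsilon)$ for all $z \in \C^+$, via Cauchy's integral formula applied to the bounded holomorphic function $\tilde{\G}(w) := \G(w + i\epsilon)$ on $\C^+$ (which decays at infinity thanks to the asymptotic in (ii)), combined with the vanishing on $\C^+$ of the corresponding Cauchy integral of its Schwarz--reflected conjugate. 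Setting $z = iy$ and letting $y \to \infty$ then forces $\mu_\epsilon(\R) = 1$ by the hypothesis, so each $\mu_\epsilon$ lies in $\Prob(\R)$. The substantive step is to establish tightness of $\{\mu_\epsilon\}_{\epsilon > 0}$, using the identity
\[
-y\, \Im \G_{\mu_\epsilon}(iy) = \int_\R \frac{y^2}{y^2 + t^2}\, d\mu_\epsilon(t)
\]
together with $-y\, \Im \G(iy + i\epsilon) \to 1$ (uniformly for $\epsilon$ in a bounded range), to derive $\mu_\epsilon([-R, R]^c) \to 0$ as $R \to \infty$ uniformly in $\epsilon$. Helly's selection theorem then produces a weakly convergent subsequence $\mu_{\epsilon_n} \rightharpoonup \mu \in \Prob(\R)$ along some $\epsilon_n \downarrow 0$, and since $t \mapsto 1/(z - t)$ is bounded and continuous on $\R$ for each fixed $z \in \C^+$, continuity of $\G$ on $\C^+$ yields
\[
\G_\mu(z) = \lim_n \G_{\mu_{\epsilon_n}}(z) = \lim_n \G(z + i\epsilon_n) = \G(z) \qquad \text{for all } z \in \C^+.
\]

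The principal difficulty is the uniform tightness step: converting the pointwise scalar asymptotic $iy\, \G(iy) \to 1$ into a quantitative no-escape-to-infinity bound for the family $\{\mu_\epsilon\}$. Once that is secured, everything else, including the well--known uniqueness of $\mu$ via Stieltjes inversion, is routine.
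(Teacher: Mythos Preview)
The paper does not give its own proof of this statement: it is recorded as a well-known fact and attributed to \cite[Lemma~2]{GH2003}, with the remark that the reference works with the Stieltjes transform $\S_\mu = -\G_\mu$ rather than the Cauchy transform (which, incidentally, accounts for the sign slip you noticed in condition~(ii)). So there is nothing in the paper to compare your argument against.

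On the substance of your outline: the direction (i)$\Rightarrow$(ii) is fine. For (ii)$\Rightarrow$(i), the architecture---Stieltjes inversion at height $\epsilon$, tightness, Helly selection, passage to the limit---is one of the standard routes, but there is a real gap where you assert that $\tilde\G(w) = \G(w+i\epsilon)$ is bounded on $\C^+$ and decays at infinity. The hypothesis $iy\,\G(iy)\to 1$ controls $\G$ only along the positive imaginary axis; neither boundedness on horizontal strips nor decay on large semicircles follows from this alone, and Schwarz--Pick by itself gives bounds that blow up too fast in the horizontal direction to close the contour. Without such control you cannot justify the Cauchy/Poisson identity $\G_{\mu_\epsilon}(z) = \G(z+i\epsilon)$, nor even be sure that $\mu_\epsilon$ is a finite measure. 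The missing ingredient is exactly where the half-plane hypothesis $\G(\C^+)\subseteq\C^-$ does its work: one typically invokes the Herglotz representation of the positive harmonic function $-\Im\G$ (equivalently, the Nevanlinna representation of the Pick function $-\G$) to obtain the needed growth bounds---but once that representation is in hand, the measure $\mu$ is already produced and the remainder of your construction becomes redundant. If you want to keep your argument self-contained, you should make explicit how you derive the global decay from the half-plane mapping property.
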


We notice that $\mu \in \Prob(\R)$ is in fact uniquely determined by its Cauchy transform $\G_\mu$; in other words (see \cite[Lemma 1]{GH2003}), for $\mu_1,\mu_2 \in \Prob(\R)$, we have that $\G_{\mu_1}(z) = \G_{\mu_2}(z)$ for all $z\in \C^+$ if and only if $\mu_1 = \mu_2$. In particular, we see that the measure $\mu$ in Item \ref{it:Cauchy_characterization_measure} of Theorem \ref{thm:Cauchy_characterization} is unique.
  
\subsection{L\'evy and Kolmogorov distance}\label{subsec:measure_distances}

The L\'evy and the Kolmogorov distance yield metrics on $\Prob(\R)$ which are among the most prominent ones which are usually considered on that space; see, e.g., \cite{Bobkov2016}. In this paper, we provide bounds for the L\'evy distance of densities of states, but we point out that such bounds often go over to the Kolmogorov distance.

Before giving the definitions of the L\'evy and the Kolmogorov distance, we stipulate that $\F_\mu: \R \to [0,1]$ will stand for the \emph{cumulative distribution function} of any $\mu \in \Prob(\R)$; recall that $\F_\mu$ is defined by $\F_\mu(t) := \mu((-\infty,t])$ for every $t\in\R$.

If $\mu,\nu \in \Prob(\R)$ are given, then their \emph{L\'evy distance} is defined by
$$L(\mu,\nu) := \inf\{\epsilon>0 \mid \forall t\in\R:\ \F_\mu(t-\epsilon) - \epsilon \leq \F_\nu(t) \leq \F_\mu(t+\epsilon) + \epsilon\}$$
and their \emph{Kolmogorov distance} is defined by
$$\Delta(\mu,\nu) := \sup_{t\in\R} |\F_\mu(t) - \F_\nu(t)|.$$

It is well-known that the L\'evy distance defines a metric on $\Prob(\R)$ which provides a metrization of weak convergence in the sense that $L(\mu_n,\mu) \to 0$ if and only if $(\mu_n)_{n=1}^\infty$ converges weakly to $\mu$.

The L\'evy distance is always upper-bounded by the Kolmogorov distance, i.e., we have $L(\mu,\nu) \leq \Delta(\mu,\nu)$ for all $\mu,\nu \in \Prob(\R)$. If we suppose in addition that $\nu$ has a cumulative distribution function $\F_\nu$ which is H\"older continuous with exponent $\beta\in(0,1]$ and H\"older constant $C>0$, i.e., $|\F_\nu(t) - \F_\nu(s)| \leq C |t-s|^\beta$ holds for all $s,t\in\R$, then \cite[Lemma B.18]{BS10} says that we also have the complementary bound $\Delta(\mu,\nu) \leq (C+1) L(\mu,\nu)^\beta$.

Both the L\'evy and the Kolmogorov distance can be controlled in terms of the Cauchy transforms of the involved measures. For the Kolmogorov distance, such bounds are given in \cite[Theorem 2.2]{Bai1993a}; see also \cite{Bai1993b,BS10}. These bounds, however, require some regularity of the cumulative distribution function of at least one of the considered measures; see \cite{GT2003,BM2020}. For the L\'evy distance, we have the following statement, by which this quantity becomes accessible via Cauchy transforms and which remarkably works without any constraints.

\begin{theorem}\label{thm:Levy_distance}
Let $\mu,\nu \in \Prob(\R)$. Then, for any choice of $\epsilon>0$, we have the bound
\begin{equation}\label{eq:Levy_bound}
L(\mu,\nu) \leq 2\sqrt{\frac{\epsilon}{\pi}} + \frac{1}{\pi} \int^\infty_{-\infty} \big| \Im(\G_\mu(t+i\epsilon)) - \Im(\G_\nu(t+i\epsilon)) \big|\, dt.
\end{equation}
\end{theorem}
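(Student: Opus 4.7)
The plan is to approximate both $\mu$ and $\nu$ by their convolutions with the Cauchy kernel at height $\epsilon$, so that the smoothed densities are read off directly from the Cauchy transforms, and then to combine a Kolmogorov bound for the smoothed measures with a L\'evy-type bound for the smoothing step itself.

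First, I would record the basic Poisson identity: taking the imaginary part of $(t-s+i\epsilon)^{-1}$ and integrating against $\mu$ gives
$$p^\mu_\epsilon(t) := -\tfrac{1}{\pi}\Im \G_\mu(t+i\epsilon) = \int_\R P_\epsilon(t-s)\, d\mu(s),$$
where $P_\epsilon(s) := \tfrac{1}{\pi}\tfrac{\epsilon}{s^2+\epsilon^2}$ is the density of the Cauchy distribution $C_\epsilon$ at scale $\epsilon$. Hence $p^\mu_\epsilon$ is the density of the smoothed probability measure $\mu_\epsilon := \mu \ast C_\epsilon$, and analogously $p^\nu_\epsilon$ is the density of $\nu_\epsilon := \nu \ast C_\epsilon$.

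Second, I would bound the L\'evy distance between $\mu$ and $\mu_\epsilon$ by an elementary truncation argument. Writing $F_{\mu_\epsilon}(t) = \int F_\mu(t-y)\,dC_\epsilon(y)$ and using the one-sided Cauchy tail estimate $C_\epsilon((a,\infty)) = \tfrac{1}{\pi}\arctan(\epsilon/a) \leq \tfrac{\epsilon}{\pi a}$ together with the monotonicity of $F_\mu$, one obtains for every $a>0$
$$F_\mu(t-a) - \tfrac{\epsilon}{\pi a} \leq F_{\mu_\epsilon}(t) \leq F_\mu(t+a) + \tfrac{\epsilon}{\pi a}\qquad\text{for all $t\in\R$.}$$
Choosing $a=\sqrt{\epsilon/\pi}$ balances the shift against the vertical error and yields $L(\mu,\mu_\epsilon)\leq \sqrt{\epsilon/\pi}$; the same estimate holds with $\nu$ in place of $\mu$.

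Third, since $\mu_\epsilon$ and $\nu_\epsilon$ have densities, their Kolmogorov distance is controlled by the $L^1$-norm of the density difference, since $F_{\mu_\epsilon}(t) - F_{\nu_\epsilon}(t) = \int_{-\infty}^t (p^\mu_\epsilon - p^\nu_\epsilon)(s)\,ds$ and hence
$$L(\mu_\epsilon,\nu_\epsilon) \leq \Delta(\mu_\epsilon,\nu_\epsilon) \leq \|p^\mu_\epsilon - p^\nu_\epsilon\|_{L^1} = \tfrac{1}{\pi}\int_{-\infty}^\infty \bigl|\Im \G_\mu(t+i\epsilon) - \Im \G_\nu(t+i\epsilon)\bigr|\,dt.$$
Finally, the triangle inequality $L(\mu,\nu)\leq L(\mu,\mu_\epsilon) + L(\mu_\epsilon,\nu_\epsilon) + L(\nu_\epsilon,\nu)$ delivers \eqref{eq:Levy_bound}. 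No step presents a real obstacle; the only mildly delicate point is the parameter choice $a=\sqrt{\epsilon/\pi}$ which, upon summing the smoothing contributions from $\mu$ and $\nu$, produces exactly the constant $2\sqrt{\epsilon/\pi}$ appearing in the statement.
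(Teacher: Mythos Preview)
Your proof is correct and follows the same overall strategy as the paper: convolve $\mu$ and $\nu$ with the Cauchy kernel $C_\epsilon$, control $L(\mu_\epsilon,\nu_\epsilon)$ by the $L^1$-distance of the densities (which are precisely $-\tfrac{1}{\pi}\Im\G_\mu(\cdot+i\epsilon)$ and $-\tfrac{1}{\pi}\Im\G_\nu(\cdot+i\epsilon)$), and control the smoothing error via the Cauchy tail bound $C_\epsilon((a,\infty))\leq \tfrac{\epsilon}{\pi a}$ with the balanced choice $a=\sqrt{\epsilon/\pi}$.

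The one difference worth noting: the paper handles the smoothing step by invoking Zolotarev's \emph{smoothing inequality}
\[
L(\mu,\nu) \leq L(\mu\ast\gamma,\nu\ast\gamma) + \max\{2\delta,\, 1-\F_\gamma(\delta)+\F_\gamma(-\delta)\}
\]
as a black box and then plugging in the tail estimate, whereas you bypass this citation by proving $L(\mu,\mu_\epsilon)\leq\sqrt{\epsilon/\pi}$ directly from the convolution representation of $\F_{\mu_\epsilon}$ and applying the ordinary triangle inequality for $L$. Your route is a bit more self-contained; the paper's route is slightly shorter once the smoothing inequality is granted. Either way the constant $2\sqrt{\epsilon/\pi}$ falls out of the same parameter choice.
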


While this result seems to be well-known, it is surprisingly challenging to localize it in the literature. The reason is that bounds for $L(\mu,\nu)$ are mostly given in terms of the characteristic functions of $\mu$ and $\nu$ rather than in terms of their Cauchy transforms; see \cite[Section 3]{Bobkov2016}. A slightly weaker version of Theorem \ref{thm:Levy_distance}, namely with an additional factor $\sqrt{2}$ in front of the term $2\sqrt{\frac{\epsilon}{\pi}}$, was obtained recently in \cite{Salazar2022}. A direct proof of Theorem \ref{thm:Levy_distance} is given in the appendix of \cite{BM2021}. For convenience of the readers, we include here another derivation based on the so-called \emph{smoothing inequality}
\begin{equation}\label{eq:smoothing_inequality}
L(\mu,\nu) \leq L(\mu \ast \gamma, \nu \ast \gamma) + \max\{2\delta, 1 - \F_\gamma(\delta) + \F_\gamma(-\delta)\}
\end{equation}
holding for arbitrary $\mu,\nu,\gamma\in\Prob(\R)$ and each $\delta>0$; a proof of \eqref{eq:smoothing_inequality} can be found in \cite{Zolotarev1973}.

\begin{proof}[Proof of Theorem \ref{thm:Levy_distance}]
For $\mu\in \Prob(\R)$ and $\epsilon>0$, we define $\mu_\epsilon\in\Prob(\R)$ by $d\mu_\epsilon(t) = -\frac{1}{\pi} \Im(\G_\mu(t+i\epsilon))\, dt$. Note that if we let $\gamma_\epsilon \in \Prob(\R)$ be given by $d\gamma_\epsilon(t) := \frac{1}{\pi}\frac{\epsilon}{\epsilon^2+t^2}\, dt$, then $\mu_\epsilon = \mu \ast \gamma_\epsilon$ for each $\epsilon>0$. Now, since we have
$$\F_{\gamma_\epsilon}(-\delta) = \gamma_\epsilon((-\infty,-\delta]) = \frac{1}{\pi} \int^{-\delta}_{-\infty} \frac{\epsilon}{\epsilon^2+t^2}\, dt \leq \frac{\epsilon}{\pi\delta}$$
and similarly $1 - \F_{\gamma_\epsilon}(\delta) = \gamma_\epsilon((\delta,\infty)) \leq \frac{\epsilon}{\pi\delta}$, the smoothing inequality \eqref{eq:smoothing_inequality} yields $L(\mu,\nu) \leq L(\mu_\epsilon,\nu_\epsilon) + 2\max\{\delta,\frac{\epsilon}{\pi\delta}\}$. By choosing $\delta = \sqrt{\frac{\epsilon}{\pi}}$, we obtain that $L(\mu,\nu) \leq L(\mu_\epsilon,\nu_\epsilon) + 2\sqrt{\frac{\epsilon}{\pi}}$. In combination with the fact that
$$L(\mu_\epsilon,\nu_\epsilon) \leq \frac{1}{\pi} \int^\infty_{-\infty} \big|\Im(\G_\mu(t+i\epsilon)) - \Im(\G_\nu(t+i\epsilon))\big|\, dt,$$
which is easily verified by straightforward estimates, this yields the asserted bound \eqref{eq:Levy_bound}.
\end{proof}

\section{The Dyson equation and density of states}\label{sec:Dyson-density_of_states}

Throughout the following, let $B$ be a unital $C^\ast$-algebra; the unit of $B$ will be denoted by $\1$. We define $\bH^+(B)$ and $\bH^-(B)$ to be the upper and lower half-plane in $B$, respectively, that is, $\bH^\pm(B) := \{b\in B \mid \exists \epsilon>0: \pm \Im(b) \geq \epsilon \1\}$ where $\Im(b) := \frac{1}{2i} (b - b^\ast)$.
Notice that $b\in B$ belongs to $\bH^\pm(B)$ if and only if $\pm \Im(b)$ is an invertible positive element in $B$; recall that $b \geq \|b^{-1}\|^{-1} \1$ for every positive $b\in B$ which is invertible.
Further, for every $\gamma>0$, we define $\bH^+_\gamma(B) := \{b + i\gamma\1 \mid b \in \bH^+(B)\}$. Notice that $\overline{\bH^+_\gamma(B)} = \{b \in \bH^+(B) \mid \Im(b) \geq \gamma \1\}$.

\subsection{The Dyson equation}\label{subsec:Dyson}

This subsection is devoted to the first step in the construction of the mapping $\mu$ that we outlined in the introduction. It consists in showing that there exists for every $\eta\in\P(B)$ a holomorphic function $G_\eta: \bH^+(B) \to \bH^-(B)$ uniquely determined among all functions defined on $\bH^+(B)$ and taking values in $\bH^-(B)$ by the Dyson equation \eqref{eq:Dyson_equation}; note that in the sequel, we shall refer to \eqref{eq:opval_semicircular_Cauchy} instead of \eqref{eq:Dyson_equation}. The next theorem, a von Neumann algebra version of which was used also in \cite{AEK2020}, gives the precise statement.

\begin{theorem}\label{thm:opval_semicircular_Cauchy}
Let $\eta: B \to B$ be a positive linear map. Then there exists a unique function $G_\eta: \bH^+(B) \to \bH^-(B)$ solving the \emph{Dyson equation}
\begin{equation}\label{eq:opval_semicircular_Cauchy}
b G_\eta(b) = \1 + \eta(G_\eta(b)) G_\eta(b) \qquad \text{for all $b\in\bH^+(B)$}.
\end{equation}
This function $G_\eta$ is holomorphic and enjoys the property that
\begin{equation}\label{eq:Cauchy_bound-1}
\|G_\eta(b)\| \leq \|\Im(b)^{-1}\| \qquad\text{for all $b\in\bH^+(B)$}.
\end{equation}
\end{theorem}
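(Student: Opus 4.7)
The plan is to recast the Dyson equation \eqref{eq:opval_semicircular_Cauchy} as the fixed point problem $G = \Phi_b(G)$ with $\Phi_b(G) := (b - \eta(G))^{-1}$ and to establish existence, uniqueness, holomorphy, and the a priori bound \eqref{eq:Cauchy_bound-1} in that order. Since every positive linear map $\eta: B \to B$ is automatically $\ast$-preserving (as $B_\sa = B_+ - B_+$), any $G \in B$ with $\Im(G) \leq 0$ satisfies $\Im(\eta(G)) = \eta(\Im(G)) \leq 0$, and hence $\Im(b - \eta(G)) \geq \gamma \1$ whenever $b \in \overline{\bH^+_\gamma(B)}$ for some $\gamma > 0$. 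Combined with the basic $C^\ast$-algebra estimate $\|T^{-1}\| \leq \|\Im(T)^{-1}\|$ for $T \in \bH^+(B)$ (obtained by the factorization $T = \Im(T)^{1/2}(\Im(T)^{-1/2}\Re(T)\Im(T)^{-1/2} + i\1)\Im(T)^{1/2}$ and the fact that $s + i\1$ has inverse of norm at most one for selfadjoint $s$), this gives $\|\Phi_b(G)\| \leq \gamma^{-1}$; in particular, any solution of \eqref{eq:opval_semicircular_Cauchy} automatically satisfies \eqref{eq:Cauchy_bound-1}. Using $\Im(\Phi_b(G)) = -\Phi_b(G)^\ast \Im(b - \eta(G))\Phi_b(G)$ and the lower bound $\Phi_b(G)^\ast\Phi_b(G) \geq (\|b\| + \gamma^{-1}\|\eta\|)^{-2}\1$, one further obtains $\Im(\Phi_b(G)) \leq -\gamma'\1$ for a strictly positive $\gamma'$. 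Hence, for any fixed $R > \gamma^{-1}$, the map $\Phi_b$ is a holomorphic self-map of the open bounded convex set $D_R := \{G \in B : \Im(G) < 0,\ \|G\| < R\}$ whose image lies at norm-distance at least $\min(R - \gamma^{-1}, \gamma') > 0$ from $\partial D_R$.

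For existence and pointwise uniqueness, the resolvent-type identity
$$\Phi_b(G_1) - \Phi_b(G_2) = (b - \eta(G_1))^{-1}\eta(G_2 - G_1)(b - \eta(G_2))^{-1}$$
combined with the first-paragraph bounds yields $\|\Phi_b(G_1) - \Phi_b(G_2)\| \leq \gamma^{-2}\|\eta\|\|G_1 - G_2\|$ on $\{G : \Im(G) \leq 0,\ \|G\| \leq \gamma^{-1}\}$, so Banach's theorem already delivers a unique fixed point $G_\eta(b)$ on the open subset $U := \{b \in \bH^+(B) : \|\Im(b)^{-1}\|^2 \|\eta\| < 1\}$. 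For an arbitrary $b \in \bH^+(B)$ the elementary contraction fails, and I would invoke the Earle-Hamilton fixed point theorem instead, applied to $\Phi_b: D_R \to D_R$ with the positive boundary distance established in the first paragraph. This produces a unique fixed point $G_\eta(b) \in D_R$ for every $b \in \bH^+(B)$, and the strict inequality $\Im(G_\eta(b)) \leq -\gamma'\1$ places it in $\bH^-(B)$. As this uniqueness statement is pointwise, it applies to arbitrary, a priori not even continuous, competitors $\bH^+(B) \to \bH^-(B)$. This globalization is the main technical obstacle of the proof: the naive Banach argument only covers the proper subset $U \subsetneq \bH^+(B)$, and the genuinely complex-analytic input of Earle-Hamilton (or an equivalent analytic continuation argument following \cite{HRFS2007}) is essential to reach all of $\bH^+(B)$.

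Finally, holomorphy of $G_\eta$ follows from Lemma \ref{lem:holomorphy_criterion} applied to the Picard iterates $G_0(b) := b^{-1}$ and $G_{n+1}(b) := \Phi_b(G_n(b))$. Each $G_n: \bH^+(B) \to \bH^-(B)$ is holomorphic by induction, as a composition of $\eta$, addition, and inversion on the open set of invertibles in $B$, and the uniform estimate $\|G_n(b)\| \leq \|\Im(b)^{-1}\|$ from the first paragraph makes $(G_n)_{n \geq 0}$ locally uniformly bounded on $\bH^+(B)$. Pointwise convergence to $G_\eta$, immediate on $U$ by the Banach contraction and extended to the rest of $\bH^+(B)$ through the hyperbolic contraction property furnished by Earle-Hamilton (which implies norm convergence inside the bounded domain $D_R$), then makes Lemma \ref{lem:holomorphy_criterion} applicable and yields holomorphy of $G_\eta$ on all of $\bH^+(B)$.
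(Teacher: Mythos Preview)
Your argument is correct and follows essentially the same route as the paper: existence, uniqueness, and the bound \eqref{eq:Cauchy_bound-1} are obtained by applying the Earle--Hamilton theorem to $h_b = \Phi_b$ (the paper defers this step to \cite{HRFS2007} via Theorem \ref{thm:iteration}, whereas you spell out the bounded domain $D_R$ and the strict-mapping property explicitly), and holomorphy is then deduced by feeding the Picard iterates into Lemma \ref{lem:holomorphy_criterion}, exactly as the paper does with $G_n(b) := h_b^{\circ n}(w_0)$. Two harmless slips to correct: the resolvent identity should read $\eta(G_1 - G_2)$ rather than $\eta(G_2 - G_1)$, and in the lower bound for $\Phi_b(G)^\ast\Phi_b(G)$ the constant should involve $R$ instead of $\gamma^{-1}$ (on $D_R$ you only know $\|G\| < R$); the preliminary Banach-contraction discussion on the subset $U$ is correct but superfluous once Earle--Hamilton is in place.
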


Essentially, this result is proven in \cite{HRFS2007}, except that the authors have not discussed that the solution of the Dyson equation is holomorphic, because this observation was irrelevant for their purpose. Here, we supplement that argument. To this end, we first recall another result from \cite{HRFS2007} and we take a closer look at its proof.

\begin{theorem}[Proposition 3.2, \cite{HRFS2007}]\label{thm:iteration}
Let $\eta: B \to B$ be a positive linear map. For every fixed $b\in \bH^+(B)$, the equation
\begin{equation}\label{eq:opval_semicircular}
b w = \1 + \eta(w) w
\end{equation}
has a unique solution $w\in \bH^-(B)$; this $w$ is the limit of the sequence $(h_b^{\circ n}(w_0))_{n=1}^\infty$ of iterates of the holomorphic function
$$h_b:\ \bH^-(B) \to \bH^-(B), \quad w \mapsto (b - \eta(w))^{-1}$$
for every choice of an initial point $w_0\in \bH^-(B)$. Moreover, we have that $\|w\| \leq \|\Im(b)^{-1}\|$.
\end{theorem}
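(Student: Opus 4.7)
The plan is to reduce the theorem to the Earle--Hamilton fixed-point theorem applied to $h_b$ on a carefully chosen bounded open subdomain $D\subseteq\bH^-(B)$. The necessary ingredients are: (i) that $h_b$ is a holomorphic self-map of $\bH^-(B)$ satisfying the norm bound $\|h_b(w)\|\leq\|\Im(b)^{-1}\|$; (ii) that \eqref{eq:opval_semicircular} in $\bH^-(B)$ is equivalent to the fixed-point equation $w=h_b(w)$; and (iii) that $h_b(D)$ lies at positive norm-distance from $B\setminus D$. Existence, uniqueness, convergence of the iterates, and the asserted norm bound for $w$ will all then drop out.

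For steps (i) and (ii), fix $w\in\bH^-(B)$. Positivity of $\eta$ implies that $\eta$ preserves adjoints, so $\Im\eta(w)=\eta(\Im w)$; the hypothesis $-\Im w\geq 0$ then gives $\eta(-\Im w)\geq 0$, hence $\Im(b-\eta(w))\geq\Im(b)\geq m\1$ with $m:=\|\Im(b)^{-1}\|^{-1}>0$. Therefore $b-\eta(w)\in\bH^+(B)$ is invertible, $h_b(w)$ is well defined, and the algebraic identity
\[
-\Im(h_b(w))\,=\,h_b(w)\,\Im(b-\eta(w))\,h_b(w)^\ast\,\geq\,m\,h_b(w)h_b(w)^\ast
\]
shows that $h_b(w)\in\bH^-(B)$; passing to any faithful representation and using $|\langle a\xi,\xi\rangle|\geq|\Im\langle a\xi,\xi\rangle|\geq m\|\xi\|^2$ for $a=b-\eta(w)$ and unit vectors $\xi$ yields the norm bound $\|h_b(w)\|\leq m^{-1}=\|\Im(b)^{-1}\|$. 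The map $h_b$ factors as the affine map $w\mapsto b-\eta(w)$ followed by Banach-algebra inversion, both Fr\'echet holomorphic, so $h_b$ is holomorphic. Finally, \eqref{eq:opval_semicircular} rewrites as $(b-\eta(w))w=\1$, and invertibility of $b-\eta(w)$ makes this equivalent to $w=h_b(w)$.

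For step (iii), set $M:=\|\Im(b)^{-1}\|$ and take the open convex (hence connected) subdomain $D:=\{w\in\bH^-(B):\|w\|<2M\}$. Step (i) gives $h_b(D)\subseteq\{w\in B:\|w\|\leq M\}\subseteq D$, while $\|b-\eta(w)\|\leq\|b\|+2\|\eta\|M$ for $w\in D$ combined with the identity above produces the quantitative strict-interior bound
\[
-\Im(h_b(w))\,\geq\,m\,(\|b\|+2\|\eta\|M)^{-2}\1\,=:\,\delta\1,\qquad w\in D.
\]
The norm component of $\partial D$ sits at distance $\geq M$ from $h_b(D)$; for a point $w'\in B\setminus\bH^-(B)$, a state on $B$ obtained by Hahn--Banach extension of one that witnesses the failure of $-\Im w'>0$ separates $\Im w$ from $\Im w'$ and yields $\|w-w'\|\geq\|\Im(w-w')\|\geq\delta$ for $w\in h_b(D)$. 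Hence $\operatorname{dist}(h_b(D),B\setminus D)\geq\min\{M,\delta\}>0$, and the Earle--Hamilton theorem produces a unique fixed point $w^\ast\in D$ of $h_b$ with norm convergence $h_b^{\circ n}(w_0)\to w^\ast$ for every $w_0\in D$. Any fixed point $w\in\bH^-(B)$ automatically satisfies $\|w\|=\|h_b(w)\|\leq M<2M$ and therefore lies in $D$, so uniqueness and the bound $\|w\|\leq\|\Im(b)^{-1}\|$ transfer to all of $\bH^-(B)$; for an arbitrary $w_0\in\bH^-(B)$, the first iterate $h_b(w_0)$ already belongs to $D$, whence $h_b^{\circ n}(w_0)\to w^\ast$ holds from every starting point.

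The step I expect to be the main obstacle is the verification of strict containment in (iii). The norm component of $\partial D$ is handled trivially by the uniform bound $\|h_b(w)\|\leq M$, but converting the operator-order estimate $-\Im h_b(w)\geq\delta\1$ into an honest \emph{norm} distance from $B\setminus\bH^-(B)$ requires switching between the order-theoretic and the norm structures on $B$ via a carefully chosen separating state; this is the one place where the $C^\ast$-algebraic (rather than merely Banach-algebraic) structure of $B$ is genuinely used, and care is needed to treat both the case where $-\Im w'$ fails positivity and the case where it merely fails strict positivity.
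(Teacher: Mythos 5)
Your argument is correct and reproduces, with full details, the proof strategy of \cite{HRFS2007} on which the paper itself relies: the paper does not give its own proof of this statement but cites \cite[Proposition 3.2]{HRFS2007} and only recaps the two key facts — that $h_b$ is a holomorphic self-map of $\bH^-(B)$ and that it satisfies the uniform bound $\sup_{w\in\bH^-(B)}\|h_b(w)\|\leq\|\Im(b)^{-1}\|$ — noting that Earle--Hamilton then finishes the job. You have supplied exactly this argument, together with the standard (and necessary, since $\bH^-(B)$ is unbounded) device of passing to the bounded convex subdomain $D$, establishing the strict-interior estimate $-\Im(h_b(w))\geq\delta\1$ via the resolvent identity $-\Im(a^{-1})=a^{-1}\Im(a)(a^{-1})^\ast$, and converting it into a norm distance with a state. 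Everything checks out: the identity $\Im\eta(w)=\eta(\Im w)$ uses that positive maps preserve adjoints; the bound $\|h_b(w)\|\leq m^{-1}$ follows from the resolvent identity combined with $\|{-\Im(h_b(w))}\|\leq\|h_b(w)\|$ and the $C^\ast$-identity (your faithful-representation route is equivalent); and the case analysis for $w'\notin\bH^-(B)$ (either $-\Im w'$ has spectrum meeting $(-\infty,0]$, giving a state with $\phi(-\Im w')\leq 0$) is handled correctly. So this is the same approach as the cited proof, carried out in full.
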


Notice that our formulation of Theorem \ref{thm:iteration} differs from the original one in \cite{HRFS2007} as we prefer to perform the iteration in $\bH^-(B)$ instead of the right half-plane of $B$, which would be $\{b\in B \mid \exists \epsilon>0: \Re(b) \geq \epsilon\1\}$, where $\Re(b) := \frac{1}{2}(b+b^\ast)$.

The proof of Theorem \ref{thm:iteration} given in \cite{HRFS2007} crucially relies on the Earle-Hamilton fixed point theorem \cite{EH1970}, an important result about fixed points of holomorphic functions between subsets of complex Banach spaces; we refer the readers who are interested in this topic also to the excellent exposition given in \cite{Harris2003}. For our purpose, it suffices to recap what the authors of \cite{HRFS2007} have proved (in the translation to our setting) in order to apply the Earle-Hamilton fixed point theorem: not only have they shown that $h_b(w) = (b - \eta(w))^{-1}$ defines a holomorphic function $h_b: \bH^-(B) \to \bH^-(B)$ for every $b\in \bH^+(B)$, but they also proved that each $h_b$ is uniformly bounded on $\bH^-(B)$ with $\sup_{w\in \bH^-(B)} \|h_b(w)\| \leq \|\Im(b)^{-1}\|$.
With these insights, we are ready to return to Theorem \ref{thm:opval_semicircular_Cauchy}.

\begin{proof}[Proof of Theorem \ref{thm:opval_semicircular_Cauchy}]
By assigning to $b\in\bH^+(B)$ the unique solution $G_\eta(b) := w \in \bH^-(B)$ of \eqref{eq:opval_semicircular}, we obtain according to Theorem \ref{thm:iteration} a unique function $G_\eta: \bH^+(B) \to \bH^-(B)$ which solves the Dyson equation \eqref{eq:opval_semicircular_Cauchy} and we infer that $G_\eta$ satisfies in addition the bound \eqref{eq:Cauchy_bound-1}. It remains to prove that $G_\eta$ is holomorphic.

To this end, we fix any point $w_0 \in \bH^-(B)$ and use it to define a sequence $(G_n)_{n=1}^\infty$ of functions $G_n: \bH^+(B) \to \bH^-(B)$ by $G_n(b) := h_b^{\circ n}(w_0)$ for all $b \in \bH^+(B)$; inductively, one easily sees that each $G_n$ is G\^ateaux holomorphic. Theorem \ref{thm:iteration} tells us that the sequence $(G_n)_{n=1}^\infty$ converges pointwise on $\bH^+(B)$ to $G_\eta$. Therefore, Lemma \ref{lem:holomorphy_criterion} will imply that $G_\eta$ is holomorphic once we have verified that $(G_n)_{n=1}^\infty$ is locally uniformly bounded.

This can be done as follows. Thanks to the uniform boundedness of $h_b$, we have $\|G_n(b)\| = \|h_b^{\circ n}(w_0)\| \leq \|\Im(b)^{-1}\|$ for all $n\in\N$ and each $b \in \bH^+(B)$. This shows that the sequence $(G_n)_{n=1}^\infty$ is uniformly bounded on $\overline{\bH^+_\gamma(B)}$ for every $\gamma>0$ and hence locally uniformly bounded on $\bH^+(B)$ since we have $\bH^+(B) = \bigcup_{\gamma>0} \overline{\bH^+_\gamma(B)}$.
\end{proof}

Since holomorphy entails Fr\'echet differentiability, we have the Fr\'echet derivative $D G_\eta: \bH^+(B) \to \cL(B)$ of $G_\eta$ at our disposal. By definition, it satisfies for every $b\in \bH^+(B)$
$$\lim_{h \rightarrow 0} \frac{1}{\|h\|} \|G_\eta(b+h) - G_\eta(b) - (DG_\eta)(b) h\| = 0.$$
For later use, we record with the following lemma some strengthening of the latter condition.

\begin{lemma}\label{lem:Taylor_bound_Cauchy}
Let $\eta: B \to B$ be a positive linear map and let $G_\eta: \bH^+(B) \to \bH^-(B)$ be the function uniquely determined by the Dyson equation \eqref{eq:opval_semicircular_Cauchy} according to Theorem \ref{thm:opval_semicircular_Cauchy}. Then the Fr\'echet derivative $D G_\eta: \bH^+(B) \to \cL(B)$ of $G_\eta$ satisfies for each $\gamma>0$
$$\lim_{h \rightarrow 0} \frac{1}{\|h\|} \sup_{b\in \overline{\bH^+_\gamma(B)}} \|G_\eta(b+h) - G_\eta(b) - (DG_\eta)(b) h\| = 0.$$
\end{lemma}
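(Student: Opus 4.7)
The plan is to recognize that Lemma \ref{lem:Cauchy_Taylor_estimates} \ref{it:Taylor_estimate} already furnishes a Taylor-type remainder estimate which is \emph{uniform} in the base point as soon as the radius $r$ and the sup-bound $M_f(x_0,r)$ can be chosen independently of $x_0 \in \overline{\bH^+_\gamma(B)}$. Fortunately, both ingredients are readily available: $\overline{\bH^+_\gamma(B)}$ has a uniform ``thickness'' inside $\bH^+(B)$, and Theorem \ref{thm:opval_semicircular_Cauchy} comes with the \emph{pointwise} bound $\|G_\eta(b)\| \leq \|\Im(b)^{-1}\|$ which is controlled purely by $\Im(b)$.

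Concretely, I would fix any $r \in (0,\gamma)$ — for definiteness, $r := \gamma/2$. The elementary estimate $\|\Im(h)\| \leq \|h\|$, valid in every $C^\ast$-algebra, yields $\Im(b + h) \geq \Im(b) - \|h\|\1 \geq (\gamma - r)\1$ whenever $b\in \overline{\bH^+_\gamma(B)}$ and $\|h\| < r$. In particular $\B_B(b,r) \subseteq \bH^+(B)$, and \eqref{eq:Cauchy_bound-1} then gives
\[
M_{G_\eta}(b,r) \,=\, \sup_{x \in \B_B(b,r)} \|G_\eta(x)\| \,\leq\, \frac{1}{\gamma - r} \qquad \text{for every $b \in \overline{\bH^+_\gamma(B)}$}.
\]
Since $G_\eta$ is holomorphic (in particular G\^ateaux holomorphic) by Theorem \ref{thm:opval_semicircular_Cauchy}, Lemma \ref{lem:Cauchy_Taylor_estimates} \ref{it:Taylor_estimate} applies at each such $b$ with these uniform constants and yields, for all $h\in B$ with $\|h\|<r$,
\[
\|G_\eta(b+h) - G_\eta(b) - (DG_\eta)(b)h\| \,\leq\, \frac{\|h\|^2}{(\gamma - r)\, r\, (r - \|h\|)}.
\]
Dividing through by $\|h\|$, taking the supremum over $b\in\overline{\bH^+_\gamma(B)}$, and letting $\|h\|\to 0$ delivers the asserted limit.

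I do not expect a real obstacle: the argument is essentially the observation that the Cauchy–Taylor estimate recorded in Lemma \ref{lem:Cauchy_Taylor_estimates} is automatically uniform once its two input constants are, and that the only outputs of Theorem \ref{thm:opval_semicircular_Cauchy} used here are G\^ateaux holomorphy and the bound \eqref{eq:Cauchy_bound-1}. In particular, no additional hypothesis on $\eta$ beyond positivity is needed at this stage; the finer properties tied to $2$-positivity will only enter later in Section \ref{subsec:Dyson_2-positive}.
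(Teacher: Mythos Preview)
Your proof is correct and follows essentially the same approach as the paper's: both apply the Taylor-type estimate of Lemma \ref{lem:Cauchy_Taylor_estimates} \ref{it:Taylor_estimate} with a uniform radius and a uniform sup-bound on $G_\eta$ coming from \eqref{eq:Cauchy_bound-1}. The only cosmetic difference is that the paper parametrizes the radius as $\sigma\gamma$ for $\sigma\in(0,1)$ and routes the inclusion $\B_B(b_0,\sigma\gamma)\subset\bH^+_{(1-\sigma)\gamma}(B)$ through the auxiliary set $\D(b_0,\sigma)$ and \eqref{eq:D-inclusion}, whereas you take $r=\gamma/2$ and verify the inclusion directly via $\Im(b+h)\geq(\gamma-\|h\|)\1$.
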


Before giving the proof of Lemma \ref{lem:Taylor_bound_Cauchy}, we first introduce some notation. For $b_0\in \bH^+(B)$ and $\sigma \in (0,1)$, we set $\D(b_0,\sigma) := \{b \in B \mid \|b-b_0\| < \sigma \|\Im(b_0)^{-1}\|^{-1}\}$. Notice that
\begin{equation}\label{eq:D-inclusion}
\D(b_0,\sigma) \subset \bH^+_\gamma(B) \qquad\text{for}\qquad \gamma := (1-\sigma) \|\Im(b_0)^{-1}\|^{-1}.
\end{equation}
Indeed, if any $b\in \D(b_0,\sigma)$ is given, we may check for every state $\phi$ on $B$ that
\begin{multline*}
\phi(\Im(b)) = \phi(\Im(b_0)) + \phi(\Im(b) - \Im(b_0))\\
\geq \|\Im(b_0)^{-1}\|^{-1} - \|b-b_0\| > (1-\sigma) \|\Im(b_0)^{-1}\|^{-1} = \gamma,
\end{multline*}
where we used that $\Im(b_0) \geq \|\Im(b_0)^{-1}\|^{-1} \1$; this shows $b \in \bH^+_\gamma(B)$, as claimed.

\begin{proof}[Proof of Lemma \ref{lem:Taylor_bound_Cauchy}]
Let $\gamma>0$ be given and choose $\sigma\in(0,1)$. Take any $b_0 \in \overline{\bH^+_\gamma(B)}$. Since $\|\Im(b_0)^{-1}\|^{-1} \geq \gamma$, we get from \eqref{eq:D-inclusion} that $\B_B(b_0,\sigma\gamma) \subseteq \D(b_0,\sigma) \subset \bH^+_{(1-\sigma) \gamma}(B)$, and \eqref{eq:Cauchy_bound-1} yields $\sup_{b\in \overline{\B_B(b_0,\sigma\gamma)}} \|G_\eta(b)\| \leq \frac{1}{(1-\sigma) \gamma}$. Using this fact, we deduce from Lemma \ref{lem:Cauchy_Taylor_estimates} \ref{it:Taylor_estimate} that
$$\|G_\eta(b_0+h) - G_\eta(b_0) - (DG_\eta)(b_0) h\| \leq \frac{1}{(1-\sigma) \sigma \gamma^2 (\sigma \gamma - \|h\|)} \|h\|^2$$
for each $h\in B$ which satisfies $\|h\| < \sigma \gamma$. Consequently, as $b_0 \in \overline{\bH^+_\gamma(B)}$ was arbitrary,
$$\frac{1}{\|h\|} \sup_{b_0\in \overline{\bH^+_\gamma(B)}} \|G_\eta(b_0+h) - G_\eta(b_0) - (DG_\eta)(b_0) h\| \leq \frac{1}{(1-\sigma) \sigma \gamma^2 (\sigma \gamma - \|h\|)} \|h\|$$
whenever $h \in B$ satisfies $0 < \|h\| < \sigma\gamma$. This yields the assertion.
\end{proof}

\subsection{Density of states}\label{subsec:density_of_states}

Based on the results about the Dyson equation obtained in the previous section, we finally can construct the map $\mu$ which we announced in the introduction.

We suppose that $(B,\phi)$ is a $C^\ast$-probability space, i.e., a $C^\ast$-algebra $B$ with unit $\1$ which is endowed with a distinguished state $\phi$.

\begin{definition}\label{def:density_of_states}
Let any data pair $\rho = (b_0,\eta) \in B_\sa \times \P(B)$ be given. Consider the holomorphic function $G_\eta: \bH^+(B) \to \bH^-(B)$ associated with $\eta$ by Theorem \ref{thm:opval_semicircular_Cauchy}. Then $\G_\rho(z) := \phi(G_\eta(z\1-b_0))$ for $z\in\C^+$ defines a holomorphic function $\G_\rho: \C^+ \to \C^-$ with the property $\lim_{y \to \infty} iy\G_\rho(iy) = 1$ and is thus, by Theorem \ref{thm:Cauchy_characterization}, the Cauchy transform of a Borel probability measure $\mu_\rho$ on the real line $\R$; more explicitly, we have that
$$\phi(G_\eta(z\1-b_0)) = \int_\R \frac{1}{z-t}\, d\mu_\rho(t) \qquad\text{for all $z\in\C^+$},$$
which determines $\mu_\rho$ uniquely. We call $\mu_\rho$ the \emph{density of states} associated with the data pair $\rho$. In this way, we obtain a mapping
$$\mu:\ B_\sa \times \P(B) \to \Prob(\R),\quad \rho \mapsto \mu_\rho.$$
\end{definition}

Notice that $\lim_{y \to \infty} iy\G_\rho(iy) = 1$ follows easily from \eqref{eq:opval_semicircular_Cauchy} with the help of \eqref{eq:Cauchy_bound-1}.

Definition \ref{def:density_of_states} extends \cite[Definition 2.2]{AEK2020} to the setting of $C^\ast$-probability spaces which are not necessarily tracial and removes any additional assumptions on the positive map.

We emphasize that $\mu_\rho$, despite its name, may very well possess an atomic part and hence can fail to have a density with respect to the Lebesgue measure on $\R$. To ensure that $\mu_\rho$ enjoys regularity properties of this kind, one needs to impose further conditions on $\rho$; see, for instance, \cite{AEK2020,MSY2018}. 

\subsection{Matricial amplifications of the Dyson equation}\label{subsec:Dyson_amplification}

Theorem \ref{thm:opval_semicircular_Cauchy} characterizes $G_\eta$ as the unique function on $\bH^+(B)$ with values in $\bH^-(B)$ which solves the Dyson equation \eqref{eq:opval_semicircular_Cauchy}. In order to further analyze $G_\eta$, it is helpful to take also matricial amplifications $\eta^{(n)}$ of $\eta$ into consideration, especially for $n=2$. For this purpose, we need a better understanding of the structure of $\bH^+(M_2(B))$; this goal is achieved by the next lemma.

\begin{lemma}\label{lem:half-plane}
Let $b_1,b_2 \in \bH^+(B)$ and $w\in B$ be given. Then $b := \begin{bmatrix} b_1 & w\\ 0 & b_2 \end{bmatrix} \in M_2(B)$ belongs to $\bH^+(M_2(B))$ if and only if
\begin{equation}\label{eq:half-plane-condition}
\| \Im(b_2)^{-1/2} w^\ast \Im(b_1)^{-1} w \Im(b_2)^{-1/2} \| < 4.
\end{equation}
Consequently, we have the following implications:
\begin{enumerate}
 \item\label{it:half-plane-1} If $\|w\|^2 < 4 \|\Im(b_1)^{-1}\|^{-1} \|\Im(b_2)^{-1}\|^{-1}$, then $b\in \bH^+(M_2(B))$.
 \item\label{it:half-plane-2} If $b\in \bH^+(M_2(B))$, then $\|w\|^2 < 4 \|\Im(b_1)\| \|\Im(b_2)\|$.
\end{enumerate}
\end{lemma}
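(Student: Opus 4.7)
The plan is to characterize $\bH^+(M_2(B))$-membership via the Schur complement and then deduce \ref{it:half-plane-1} and \ref{it:half-plane-2} by elementary norm estimates. A direct computation gives
$$\Im(b) \,=\, \frac{1}{2i}(b - b^\ast) \,=\, \begin{bmatrix} \Im(b_1) & -\tfrac{i}{2} w \\ \tfrac{i}{2} w^\ast & \Im(b_2) \end{bmatrix},$$
and by definition of $\bH^+(M_2(B))$, the statement $b \in \bH^+(M_2(B))$ is equivalent to $\Im(b)$ being a positive invertible element of $M_2(B)$.

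Since $\Im(b_1)$ is positive invertible by hypothesis, I would next invoke the standard Schur complement factorization
$$\Im(b) \,=\, U^\ast \begin{bmatrix} \Im(b_1) & 0 \\ 0 & S \end{bmatrix} U, \qquad U := \begin{bmatrix} \1 & -\tfrac{i}{2}\Im(b_1)^{-1} w \\ 0 & \1 \end{bmatrix},$$
where $S := \Im(b_2) - \tfrac{1}{4} w^\ast \Im(b_1)^{-1} w$ is self-adjoint. Since $U$ is invertible in $M_2(B)$, $\Im(b)$ is positive invertible if and only if $S$ is. Conjugating by $\Im(b_2)^{-1/2}$, the latter reduces to $\1 - \tfrac{1}{4} T$ being positive invertible, where $T := \Im(b_2)^{-1/2} w^\ast \Im(b_1)^{-1} w\, \Im(b_2)^{-1/2}$ is positive in $B$. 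As $\|T\|$ coincides with the spectral radius of $T$, this in turn is equivalent to $\|T\| < 4$, which is precisely the condition \eqref{eq:half-plane-condition}.

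The two implications then follow easily. For \ref{it:half-plane-1}, submultiplicativity yields $\|T\| \leq \|\Im(b_1)^{-1}\| \|\Im(b_2)^{-1}\| \|w\|^2$, so the assumed bound forces $\|T\| < 4$. For \ref{it:half-plane-2}, I would combine two sandwich estimates: first, $\Im(b_1) \leq \|\Im(b_1)\|\1$ implies, by order-reversal of the inverse, $\|\Im(b_1)\|^{-1} w^\ast w \leq w^\ast \Im(b_1)^{-1} w$; second, $T \leq \|T\|\1$ gives $w^\ast \Im(b_1)^{-1} w \,=\, \Im(b_2)^{1/2} T\, \Im(b_2)^{1/2} \,\leq\, \|T\|\, \Im(b_2) \,\leq\, \|T\|\, \|\Im(b_2)\| \1$. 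Taking norms produces $\|w\|^2 \leq \|T\|\, \|\Im(b_1)\| \|\Im(b_2)\| < 4 \|\Im(b_1)\| \|\Im(b_2)\|$. The main point requiring care is preserving the strict inequality in \eqref{eq:half-plane-condition}, which is secured by the fact that $\|T\|$ is actually attained in the spectrum of the positive element $T$; everything else is routine $C^\ast$-algebraic manipulation.
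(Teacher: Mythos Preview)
Your argument is correct. The only genuine difference from the paper's proof is the choice of congruence used to reduce the positivity-and-invertibility question for $\Im(b)$: you use an $LDL^\ast$-type factorization with the unit upper triangular $U$ and thereby reduce to the Schur complement $S$, whereas the paper conjugates $\Im(b)$ by $\operatorname{diag}(\Im(b_1)^{-1/2},\Im(b_2)^{-1/2})$ to reach a matrix of the form $\begin{bmatrix} \1 & v \\ v^\ast & \1 \end{bmatrix}$ with $v := \frac{1}{2i}\Im(b_1)^{-1/2} w \Im(b_2)^{-1/2}$, and then invokes the auxiliary fact (Lemma~\ref{lem:positive_invertible_matrix}\,\ref{it:positive_invertible_matrix}) that such a matrix is positive and invertible if and only if $\|v\| < 1$. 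Since $\|v\|^2 = \tfrac{1}{4}\|T\|$ with your $T$, the two routes are literally equivalent; your approach is a touch more self-contained because it bypasses that separate $2\times 2$ lemma, at the cost of a short spectral argument for $\1 - \tfrac14 T$. The derivations of \ref{it:half-plane-1} and \ref{it:half-plane-2} via the operator inequalities $\Im(b_j)^{-1} \leq \|\Im(b_j)^{-1}\|\,\1$ and $\1 \leq \|\Im(b_j)\|\,\Im(b_j)^{-1}$ are essentially the same in both proofs.
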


The proof of Lemma \ref{lem:half-plane} relies substantially on the following general facts.

\begin{lemma}\label{lem:positive_invertible_matrix}
Let $B$ be a $C^\ast$-algebra with unit $\1$. For each $v\in B$, we have that
\begin{enumerate}
 \item\label{it:positive_matrix} $\begin{bmatrix} \1 & v\\ v^\ast & \1 \end{bmatrix} \in M_2(B)$ is positive if and only if $\|v\| \leq 1$;
 \item\label{it:positive_invertible_matrix} $\begin{bmatrix} \1 & v\\ v^\ast & \1 \end{bmatrix} \in M_2(B)$ is positive and invertible if and only if $\|v\| < 1$.
\end{enumerate}
\end{lemma}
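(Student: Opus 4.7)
The plan is to reduce the question about the $2 \times 2$ block matrix to a scalar-style question about the single element $\1 - v^\ast v \in B$ by means of a Cholesky-type factorization. Concretely, setting
$$L := \begin{bmatrix} \1 & 0\\ v^\ast & \1 \end{bmatrix}, \qquad D := \begin{bmatrix} \1 & 0\\ 0 & \1 - v^\ast v \end{bmatrix},$$
a direct computation shows
$$\begin{bmatrix} \1 & v\\ v^\ast & \1 \end{bmatrix} = L D L^\ast.$$
The matrix $L$ is invertible in $M_2(B)$ with inverse $\bigl[\begin{smallmatrix} \1 & 0\\ -v^\ast & \1 \end{smallmatrix}\bigr]$, and the map $X \mapsto L X L^\ast$ is a $\ast$-isomorphism on selfadjoint elements preserving both positivity and positive invertibility (because $X \geq 0 \iff LXL^\ast \geq 0$ for invertible $L$, and similarly for the strict relation $X \geq \epsilon \1$). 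Hence both statements reduce to the corresponding statements for $D$, which by its block-diagonal structure reduce in turn to the corresponding statements for $\1 - v^\ast v \in B$.

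For part \ref{it:positive_matrix}, I would note that $\1 - v^\ast v \geq 0$ is equivalent to $v^\ast v \leq \1$. Since $v^\ast v$ is positive, its norm equals its spectral radius, so $v^\ast v \leq \1$ is in turn equivalent to $\|v^\ast v\| \leq 1$; the $C^\ast$-identity $\|v^\ast v\| = \|v\|^2$ then gives the claimed criterion $\|v\| \leq 1$.

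For part \ref{it:positive_invertible_matrix}, I would use that a positive element of $B$ is invertible if and only if its spectrum is bounded away from $0$. The spectrum of $v^\ast v$ is contained in $[0, \|v\|^2]$, hence the spectrum of $\1 - v^\ast v$ lies in $[1 - \|v\|^2, 1]$. So $\1 - v^\ast v$ is positive and invertible precisely when $\|v\|^2 < 1$, as desired.

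No serious obstacle is expected; the result is a textbook Schur-complement computation, and the only care needed is to spell out why the congruence $X \mapsto LXL^\ast$ with invertible $L$ preserves positive invertibility (straightforward since $X \geq \epsilon \1$ forces $LXL^\ast \geq \epsilon L L^\ast \geq \epsilon \|L^{-1}\|^{-2} \1$, and the converse follows by applying the same to $L^{-1}$).
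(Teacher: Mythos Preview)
Your proof is correct and proceeds via the classical Schur-complement (LDL$^\ast$) factorization, reducing both parts to spectral statements about $\1 - v^\ast v$. The paper takes a different route: it simply cites Paulsen's textbook for part \ref{it:positive_matrix}, and then deduces part \ref{it:positive_invertible_matrix} from part \ref{it:positive_matrix} by the rescaling observation that, for $\epsilon \in (0,1)$, the condition $\bigl[\begin{smallmatrix} \1 & v\\ v^\ast & \1 \end{smallmatrix}\bigr] \geq \epsilon \1_2$ is equivalent to $\bigl[\begin{smallmatrix} \1 & v/(1-\epsilon)\\ v^\ast/(1-\epsilon) & \1 \end{smallmatrix}\bigr] \geq 0$, hence to $\|v\| \leq 1-\epsilon$. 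Your approach is more self-contained (no external citation needed) and treats both parts uniformly through one factorization; the paper's argument is shorter once \ref{it:positive_matrix} is taken for granted and has the mild advantage of not touching spectra at all. Two minor remarks on your write-up: the map $X \mapsto LXL^\ast$ is not a ``$\ast$-isomorphism'' since it does not respect products---what you actually use, and correctly justify, is that it is an order-isomorphism on selfadjoint elements preserving strict positivity; and in part \ref{it:positive_invertible_matrix} the ``only if'' direction tacitly uses that $\|v\|^2$ lies in the spectrum of $v^\ast v$ (so that $1-\|v\|^2$ lies in the spectrum of $\1-v^\ast v$), which follows from the norm--spectral-radius equality you already invoked in part \ref{it:positive_matrix}.
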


\begin{proof}
The first equivalence \ref{it:positive_matrix} is \cite[Lemma 3.1 (i)]{Paulsen2002}; a direct proof of \ref{it:positive_invertible_matrix} was given in \cite[Section 2.1]{Belinschi2017}. We note that \ref{it:positive_invertible_matrix} can also be deduced from \ref{it:positive_matrix}; indeed, for any $\epsilon\in(0,1)$, it gives that
$\begin{bmatrix} \1 & v\\ v^\ast & \1 \end{bmatrix} \geq \epsilon \1_2,$
or equivalently
$\begin{bmatrix} \1 & \frac{1}{1-\epsilon} v\\ \frac{1}{1-\epsilon}v^\ast & \1 \end{bmatrix} \geq 0,$
if and only if $\|v\| \leq 1-\epsilon$.
\end{proof}

\begin{proof}[Proof of Lemma \ref{lem:half-plane}]
The proof of the first assertion is basically given in the discussion at the end of Section 2.1 in \cite{Belinschi2017}; for the readers' convenience, we include the argument, adapted to our situation. First of all, we notice that $b\in \bH^+(M_2(B))$ if and only if the matrix
$$\Im(b) = \Im\bigg(\begin{bmatrix} b_1 & w\\ 0 & b_2 \end{bmatrix}\bigg) = \begin{bmatrix} \Im(b_1) & \frac{1}{2i}w\\ -\frac{1}{2i}w^\ast & \Im(b_2) \end{bmatrix}$$
is positive and invertible. Since $b_1,b_2 \in \bH^+(B)$ by assumption, both $\Im(b_1)$ and $\Im(b_2)$ are positive and invertible; thus, the operator $\Im(b)$ is positive and invertible if and only if
$$\begin{bmatrix} \Im(b_1)^{-1/2} & 0 \\ 0 & \Im(b_2)^{-1/2} \end{bmatrix} \Im(b) \begin{bmatrix} \Im(b_1)^{-1/2} & 0 \\ 0 & \Im(b_2)^{-1/2} \end{bmatrix} = \begin{bmatrix} \1 & v\\ v^\ast & \1 \end{bmatrix},$$
with $v := \frac{1}{2i} \Im(b_1)^{-1/2} w \Im(b_2)^{-1/2}$, is positive and invertible. With the aid of Lemma \ref{lem:positive_invertible_matrix} \ref{it:positive_invertible_matrix}, we infer that $\Im(b)$ is positive and invertible if and only if
$$\frac{1}{4}\| \Im(b_2)^{-1/2} w^\ast \Im(b_1)^{-1} w \Im(b_2)^{-1/2} \| = \|v^\ast v\| = \|v\|^2 < 1,$$
which is the statement we wanted to prove.

Having established that $b\in \bH^+(M_2(B))$ holds true if and only if condition \eqref{eq:half-plane-condition} is satisfied, we can deduce the additional assertions:

Suppose that $\|w\|^2 < 4 \|\Im(b_1)^{-1}\|^{-1} \|\Im(b_2)^{-1}\|^{-1}$ is satisfied. Since $\Im(b_i)^{-1} \leq \|\Im(b_i)^{-1}\| \1$ for $i=1,2$, we get in this case that
$$\|\Im(b_2)^{-1/2} w^\ast \Im(b_1)^{-1} w \Im(b_2)^{-1/2}\| \leq \|w\|^2 \|\Im(b_1)^{-1}\| \|\Im(b_2)^{-1}\| < 4,$$
Thus, \eqref{eq:half-plane-condition} is satisfied and hence we have $b\in\bH^+(M_2(B))$; this proves \ref{it:half-plane-1}.

In order to prove \ref{it:half-plane-2}, we suppose that $b\in \bH^+(M_2(B))$. First, we notice that $\|w\| \leq \|w \Im(b_2)^{-1/2}\| \| \Im(b_2) \|^{1/2}$ since $\|\Im(b_2)^{1/2}\| = \|\Im(b_2)\|^{1/2}$ by the $C^\ast$-identity. Then, we use $\1 \leq \|\Im(b_1)\| \Im(b_1)^{-1}$ to infer from the latter that
\begin{align*}
\|w\|^2 &\leq \|w \Im(b_2)^{-1/2}\|^2 \| \Im(b_2) \|\\
        &= \|\Im(b_2)^{-1/2} w^\ast w \Im(b_2)^{-1/2}\| \| \Im(b_2) \|\\
				&\leq \|\Im(b_1)\| \|\Im(b_2)\| \| \Im(b_2)^{-1/2} w^\ast \Im(b_1)^{-1} w \Im(b_2)^{-1/2} \|.
\end{align*}
Involving \eqref{eq:half-plane-condition}, which holds since we have $b\in \bH^+(M_2(B))$ by assumption, we conclude that $\|w\|^2 < 4 \|\Im(b_1)\| \|\Im(b_2)\|$, as desired.
\end{proof}

The next proposition provides a strengthening of Theorem \ref{thm:opval_semicircular_Cauchy} for $n$-positive maps $\eta: B \to B$. In such cases, the Dyson equation uniquely determines a noncommutative function of height $n$, in the terminology that we explain in the Appendix \ref{sec:FreeNCFunctionTheory}; see also Proposition \ref{prop:opval_semicircular_Cauchy_matrix_revisited}.

\begin{proposition}\label{prop:opval_semicircular_Cauchy_matrix}
Let $\eta: B \to B$ be an $n$-positive linear map. Consider the noncommutative sets of height $n$ which are given by $\bH^\pm(M_{\leq n}(B)) := \coprod_{k=1}^n \bH^\pm(M_k(B))$. Then there exists a unique noncommutative function $G^{(\leq n)}_\eta: \bH^+(M_{\leq n}(B)) \to \bH^-(M_{\leq n}(B))$ of height $n$ such that, for each $k=1,\dots,n$, the $k$-th level function $G_\eta^{(k)}: \bH^+(M_k(B)) \to \bH^-(M_k(B))$ satisfies
\begin{equation}\label{eq:opval_semicircular_Cauchy_matrix}
b G_\eta^{(k)}(b) = \1_k + \eta^{(k)}(G_\eta^{(k)}(b)) G_\eta^{(k)}(b) \qquad \text{for all $b\in\bH^+(M_k(B))$}.
\end{equation}
In particular, if $n\geq 2$, then for every $b\in \bH^+(B)$ and each $w\in B$ satisfying $\begin{bmatrix} b & w\\ 0 & b \end{bmatrix} \in \bH^+(M_2(B))$, we have that
\begin{equation}\label{eq:Cauchy-matrix-derivative}
G^{(2)}_\eta\bigg(\begin{bmatrix} b & w\\ 0 & b \end{bmatrix} \bigg) = \begin{bmatrix} G_\eta(b) & (D G_\eta)(b) w \\ 0 & G_\eta(b) \end{bmatrix}.
\end{equation}
\end{proposition}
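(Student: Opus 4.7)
My plan is to construct the solutions level by level using Theorem \ref{thm:opval_semicircular_Cauchy}, to glue them into a noncommutative function of height $n$ by leveraging its uniqueness clause, and to derive the specific formula \eqref{eq:Cauchy-matrix-derivative} by combining the intertwining content of the noncommutative function structure with a perturbation argument inside $\bH^-(M_2(B))$.

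Since $\eta \in \P_n(B)$, each amplification $\eta^{(k)}: M_k(B) \to M_k(B)$ for $k = 1,\dots,n$ is a positive linear map on the unital $C^\ast$-algebra $M_k(B)$, so Theorem \ref{thm:opval_semicircular_Cauchy} provides a unique holomorphic function $G^{(k)}_\eta: \bH^+(M_k(B)) \to \bH^-(M_k(B))$ satisfying \eqref{eq:opval_semicircular_Cauchy_matrix} with $\|G^{(k)}_\eta(b)\| \leq \|\Im(b)^{-1}\|$. To verify that $G^{(\leq n)}_\eta := (G^{(k)}_\eta)_{k=1}^n$ is a noncommutative function, it suffices, by standard reductions in noncommutative function theory recalled in the appendix, to check respect for direct sums and for $GL_k(\C)$-similarity within the height-$n$ constraint. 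Both reduce to the uniqueness part of Theorem \ref{thm:opval_semicircular_Cauchy}: for $b_i \in \bH^+(M_{k_i}(B))$ with $k_1 + k_2 \leq n$, the element $G^{(k_1)}_\eta(b_1) \oplus G^{(k_2)}_\eta(b_2)$ solves the level-$(k_1+k_2)$ Dyson equation at $b_1 \oplus b_2$ because $\eta^{(k_1+k_2)}$ acts block-diagonally on block-diagonal elements, and for $s \in GL_k(\C)$ with $b, sbs^{-1} \in \bH^+(M_k(B))$, the $\C$-linearity of $\eta$ gives $\eta^{(k)}(s\cdot s^{-1}) = s\,\eta^{(k)}(\cdot)\,s^{-1}$, so $sG^{(k)}_\eta(b)s^{-1}$ solves the Dyson equation at $sbs^{-1}$. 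Uniqueness of $G^{(\leq n)}_\eta$ as a noncommutative function is inherited from the level-wise uniqueness.

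For the formula \eqref{eq:Cauchy-matrix-derivative}, set $G := G_\eta(b)$ and $b' := \bigl[\begin{smallmatrix} b & w \\ 0 & b \end{smallmatrix}\bigr]$. The intertwinings $b'\bigl[\begin{smallmatrix} 1 \\ 0 \end{smallmatrix}\bigr] = \bigl[\begin{smallmatrix} 1 \\ 0 \end{smallmatrix}\bigr] b$ and $(0,1)\,b' = b\,(0,1)$ applied to the noncommutative function $G^{(\leq n)}_\eta$ force $G^{(2)}_\eta(b') = \bigl[\begin{smallmatrix} G & X_\ast \\ 0 & G \end{smallmatrix}\bigr]$ for some $X_\ast \in B$, and the off-diagonal block of the level-$2$ Dyson equation for this form yields
\begin{equation*}
(b - \eta(G)) X_\ast - \eta(X_\ast) G = -wG.
\end{equation*}
Fréchet-differentiation of the level-$1$ Dyson equation at $b$ in direction $w$ produces the very same identity for $(DG_\eta)(b) w$. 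To conclude $X_\ast = (DG_\eta)(b) w$, I would rule out nontrivial solutions $Z \in B$ of the homogeneous equation $(b - \eta(G)) Z = \eta(Z) G$: any such $Z$ would make $Y_\lambda := G^{(2)}_\eta(b') + \lambda\bigl[\begin{smallmatrix} 0 & Z \\ 0 & 0 \end{smallmatrix}\bigr]$ satisfy the level-$2$ Dyson equation at $b'$ for every $\lambda \in \C$, as a direct expansion shows, and for sufficiently small $|\lambda|$ this $Y_\lambda$ remains inside the open set $\bH^-(M_2(B))$; the uniqueness from Theorem \ref{thm:iteration} would then force $Y_\lambda = G^{(2)}_\eta(b')$, i.e., $\lambda Z = 0$, so $Z = 0$.

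The main obstacle I foresee is the careful formulation of the intertwining step: ensuring that, within the height-$n$ restriction, respect for direct sums together with $GL_k(\C)$-similarity really entails the general scalar-matrix intertwining law I invoke to obtain the upper-triangular shape of $G^{(2)}_\eta(b')$. Once this is in place, the remaining analytic content---the uniqueness of the off-diagonal solution via perturbation inside $\bH^-(M_2(B))$---is comparatively light.
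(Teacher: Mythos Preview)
Your level-by-level construction and the direct-sum argument match the paper's proof. There are, however, two genuine gaps.

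\textbf{Similarity.} You argue that $sG^{(k)}_\eta(b)s^{-1}$ solves the level-$k$ Dyson equation at $sbs^{-1}$ and then invoke uniqueness. The algebraic identity is correct, but the uniqueness clause in Theorem~\ref{thm:opval_semicircular_Cauchy} (equivalently Theorem~\ref{thm:iteration}) applies only to solutions lying in $\bH^-(M_k(B))$, and for a non-unitary $s\in GL_k(\C)$ there is no a~priori reason why $sG^{(k)}_\eta(b)s^{-1}$ should have strictly negative imaginary part. The paper bypasses this issue by using the fixed-point iteration of Theorem~\ref{thm:iteration}: since $h_{sbs^{-1}}(w)=s\,h_b(s^{-1}ws)\,s^{-1}$ and both $b$ and $sbs^{-1}$ are in $\bH^+(M_k(B))$, iterating from the same initial point $-i\1_k$ shows that $h_{sbs^{-1}}^{\circ m}(-i\1_k)=s\,h_b^{\circ m}(-i\1_k)\,s^{-1}$ for all $m$, and the two limits (both guaranteed to lie in $\bH^-$) coincide up to conjugation.

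\textbf{The formula \eqref{eq:Cauchy-matrix-derivative}.} The obstacle you flag is real: the rectangular intertwinings $b'\bigl[\begin{smallmatrix}1\\0\end{smallmatrix}\bigr]=\bigl[\begin{smallmatrix}1\\0\end{smallmatrix}\bigr]b$ and $(0,1)\,b'=b\,(0,1)$ involve levels $1$ and $2$ simultaneously, and the standard reduction to direct sums plus similarity would require level~$3$, which is unavailable when $n=2$. The paper proves the respect-for-intertwinings property separately (Proposition~\ref{prop:opval_semicircular_Cauchy_matrix_revisited}, again via the iteration), but for \eqref{eq:Cauchy-matrix-derivative} it does not need it: Theorem~\ref{theo:matrix-derivative} uses only direct sums, similarity, and continuity. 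Concretely, for $z\neq 0$ the identity \eqref{eq:intertwining} makes $\bigl[\begin{smallmatrix} b & w\\ 0 & b+zw\end{smallmatrix}\bigr]$ similar to $b\oplus(b+zw)$, so
\[
G_\eta^{(2)}\bigg(\begin{bmatrix} b & w\\ 0 & b+zw\end{bmatrix}\bigg)=\begin{bmatrix} G_\eta(b) & \tfrac{1}{z}\bigl(G_\eta(b+zw)-G_\eta(b)\bigr)\\ 0 & G_\eta(b+zw)\end{bmatrix},
\]
and letting $z\to 0$ gives \eqref{eq:Cauchy-matrix-derivative} directly. This avoids both your intertwining step and the perturbation uniqueness argument (which, incidentally, is correct once the upper-triangular shape is known).
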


\begin{proof}
For $k=1,\dots,n$, because $\eta^{(k)}: M_k(B) \to M_k(B)$ is positive, Theorem \ref{thm:opval_semicircular_Cauchy} guarantees the existence of a unique function $G_\eta^{(k)}: \bH^+(M_k(B)) \to \bH^-(M_k(B))$ satisfying \eqref{eq:opval_semicircular_Cauchy_matrix}; in fact, we have that $G_\eta^{(k)} = G_{\eta^{(k)}}$. Thus, in order to establish the existence of a unique noncommutative function $G^{(\leq n)}_\eta: \bH^+(M_{\leq n}(B)) \to \bH^-(M_{\leq n}(B))$ of height $n$ with the properties required in the proposition, it suffices to show that the aforementioned functions $G_\eta^{(k)}$ combine to a noncommutative function of height $n$. To this end, we check the following:

Whenever $1 \leq k,l \leq n$ are such that $k+l \leq n$ and if $b_1 \in \bH^+(M_k(B))$ and $b_2 \in \bH^+(M_l(B))$, then both $G_\eta^{(k+l)}(b_1 \oplus b_2)$ and $G_\eta^{(k)}(b_1) \oplus G_\eta^{(l)}(b_2)$ are in $\bH^-(M_{k+l}(B))$ and solve \eqref{eq:opval_semicircular} for $\eta^{(k+l)}\in \P(M_{k+l}(B))$ at $b_1 \oplus b_2 \in \bH^+(M_{k+l}(B))$. By uniqueness, $G_\eta^{(k+l)}(b_1 \oplus b_2) = G_\eta^{(k)}(b_1) \oplus G_\eta^{(l)}(b_2)$.

For $1\leq k \leq n$, if $b\in \bH^+(M_k(B))$ and an invertible matrix $S\in M_k(\C)$ are given such that $S b S^{-1} \in \bH^+(M_k(B))$ holds, then Theorem \ref{thm:iteration} tells us that
$$G^{(k)}_\eta(b) = \lim_{n\to \infty} h^{\circ n}_b(-i \1_k) \qquad\text{and}\qquad G^{(k)}_\eta(S b S^{-1}) = \lim_{n\to \infty} h^{\circ n}_{S b S^{-1}}(-i \1_k).$$
Notice that the functions $h_b$ and $h_{S b S^{-1}}$ are defined like in Theorem \ref{thm:iteration}, but with respect to $\eta^{(k)}$ instead of $\eta$; both of them map $\bH^-(M_k(B))$ into itself. Since for all $w\in \bH^-(M_k(B))$
\begin{multline*}
h_{S b S^{-1}}(w) = \big(S b S^{-1} - \eta^{(k)}(w)\big)^{-1}
                  = S \big(b - S^{-1} \eta^{(k)}(w) S\big)^{-1} S^{-1}\\
									= S \big(b - \eta^{(k)}(S^{-1} w S)\big)^{-1} S^{-1} = S h_b(S^{-1} w S) S^{-1},
\end{multline*}
we see that $h^{\circ n}_{S b S^{-1}} (-i \1_k) = S^{-1} h_b^{\circ n}(-i \1_k) S$ for all $n\in\N$ and so $G_\eta^{(k)}(S b S^{-1}) = S G_\eta^{(k)}(b) S^{-1}$.

The asserted formula \eqref{eq:Cauchy-matrix-derivative} follows immediately from Theorem \ref{theo:matrix-derivative}.
\end{proof}

\subsection{Properties of solutions of the Dyson equation for $2$-positive maps}\label{subsec:Dyson_2-positive}

With the aid of Proposition \ref{prop:opval_semicircular_Cauchy_matrix}, we now derive bounds for the Fr\'echet derivative of $G_\eta$ for $\eta \in \P_2(B)$, which are in the spirit of \cite[Proposition 3.1]{Belinschi2017}.

\begin{lemma}\label{lem:Cauchy_derivatives_bound}
Suppose that $\eta \in \P_2(B)$. Then, for every $b\in \bH^+(B)$,
\begin{equation}\label{eq:Cauchy_derivatives_bound-1}
 \| (D G_\eta)(b) \| = \sup_{w\in B\colon \|w\| \leq 1} \| (D G_\eta)(b) w \| \leq \|\Im(b)^{-1}\|^2,
\end{equation}
and moreover, if $\phi$ is any state on $B$, then
\begin{equation}\label{eq:Cauchy_derivatives_bound-2}
\sup_{w\in B\colon \|w\| \leq 1} \big|\phi( (D G_\eta)(b) w )\big| \leq - \Im(\phi(G_\eta(b))) \|\Im(b)^{-1}\|.
\end{equation}
\end{lemma}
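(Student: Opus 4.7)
The plan is to exploit the matricial amplification identity \eqref{eq:Cauchy-matrix-derivative} of Proposition~\ref{prop:opval_semicircular_Cauchy_matrix} together with the fact that $G^{(2)}_\eta$ takes values in $\bH^-(M_2(B))$. Fix $b \in \bH^+(B)$, set $Y := \Im(b)$, and let $w \in B$ be arbitrary. For $z \in \C$ satisfying $|z| \cdot \|Y^{-1/2} w Y^{-1/2}\| < 2$, Lemma~\ref{lem:half-plane} ensures that
\[
B_z := \begin{bmatrix} b & zw \\ 0 & b \end{bmatrix} \in \bH^+(M_2(B)),
\]
and \eqref{eq:Cauchy-matrix-derivative} gives the explicit formula
\[
G^{(2)}_\eta(B_z) = \begin{bmatrix} G_\eta(b) & z\, (DG_\eta)(b)\, w \\ 0 & G_\eta(b) \end{bmatrix}.
\]
Since $G^{(2)}_\eta(B_z) \in \bH^-(M_2(B))$, the matrix $-\Im G^{(2)}_\eta(B_z)$ is positive in $M_2(B)$; a direct computation identifies it as
\[
-\Im G^{(2)}_\eta(B_z) = \begin{bmatrix} -\Im G_\eta(b) & \tilde c \\ \tilde c^\ast & -\Im G_\eta(b) \end{bmatrix}, \qquad \tilde c := \tfrac{iz}{2}(DG_\eta)(b)w.
\]

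For part~\eqref{eq:Cauchy_derivatives_bound-1}, I conjugate this positive matrix by $(-\Im G_\eta(b))^{-1/2} \oplus (-\Im G_\eta(b))^{-1/2}$ (permissible since $G_\eta(b) \in \bH^-(B)$) to reduce it to the normalized form $\bigl[\begin{smallmatrix}\1 & v\\ v^\ast & \1\end{smallmatrix}\bigr]$; Lemma~\ref{lem:positive_invertible_matrix}~\ref{it:positive_matrix} then forces $\|v\| \leq 1$, hence $\|\tilde c\| \leq \|\Im G_\eta(b)\|$. Unwinding this and letting $|z|$ approach the admissible threshold $2/\|Y^{-1/2} w Y^{-1/2}\|$ produces $\|(DG_\eta)(b)w\| \leq \|\Im G_\eta(b)\| \cdot \|Y^{-1/2} w Y^{-1/2}\|$, and the elementary bounds $\|\Im G_\eta(b)\| \leq \|G_\eta(b)\| \leq \|Y^{-1}\|$ (the last from \eqref{eq:Cauchy_bound-1}) together with $\|Y^{-1/2} w Y^{-1/2}\| \leq \|Y^{-1}\| \|w\|$ complete \eqref{eq:Cauchy_derivatives_bound-1}. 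For part~\eqref{eq:Cauchy_derivatives_bound-2} I proceed in parallel, applying first the entrywise extension $\phi^{(2)}: M_2(B) \to M_2(\C)$, which is positive because every state on a $C^\ast$-algebra is completely positive. The resulting scalar Hermitian matrix $\phi^{(2)}(-\Im G^{(2)}_\eta(B_z))$ has diagonal entries $-\Im\phi(G_\eta(b))$ and off-diagonal $\tfrac{iz}{2}\phi((DG_\eta)(b)w)$, and its positivity in $M_2(\C)$ boils down via the determinant criterion to $\tfrac{|z|}{2}|\phi((DG_\eta)(b)w)| \leq -\Im\phi(G_\eta(b))$; the same optimization over $|z|$ delivers \eqref{eq:Cauchy_derivatives_bound-2}.

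The main conceptual step is the choice of perturbation: taking the strictly upper-triangular $H = \bigl[\begin{smallmatrix}0 & w\\ 0 & 0\end{smallmatrix}\bigr]$ with diagonal $b \oplus b$ keeps the diagonal blocks of $G^{(2)}_\eta(B_z)$ exactly equal to $G_\eta(b)$, so the $\bH^-$-valuedness of $G^{(2)}_\eta$ translates into a genuine Pick-type inequality that compares $(DG_\eta)(b)w$ directly against $\Im G_\eta(b)$, rather than against the coarser $\|\Im(b)^{-1}\|$-scale a naive Cauchy estimate on $G_\eta$ alone would produce; this is exactly what allows the sharper factors (no factor of $4$ loss) in the statement. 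Everything else is formal manipulation with Lemma~\ref{lem:positive_invertible_matrix} and a one-parameter optimization in $z$.
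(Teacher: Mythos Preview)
Your proof is correct and follows essentially the same approach as the paper's: both arguments feed the upper-triangular point $\bigl[\begin{smallmatrix} b & w\\ 0 & b\end{smallmatrix}\bigr]$ into the amplification formula \eqref{eq:Cauchy-matrix-derivative}, use that $G^{(2)}_\eta$ lands in $\bH^-(M_2(B))$, and then extract the two bounds from positivity of $-\Im G^{(2)}_\eta$ (directly for \eqref{eq:Cauchy_derivatives_bound-1}, and after applying $\phi^{(2)}$ for \eqref{eq:Cauchy_derivatives_bound-2}). The only cosmetic differences are that you work with the exact admissibility threshold $|z|\,\|Y^{-1/2}wY^{-1/2}\|<2$ from \eqref{eq:half-plane-condition} and spell out the conjugation-plus-Lemma~\ref{lem:positive_invertible_matrix}\,\ref{it:positive_matrix} step, whereas the paper uses the coarser sufficient condition of Lemma~\ref{lem:half-plane}\,\ref{it:half-plane-1} and invokes Lemma~\ref{lem:half-plane}\,\ref{it:half-plane-2} directly; after optimizing and bounding $\|Y^{-1/2}wY^{-1/2}\|\leq\|Y^{-1}\|\|w\|$ the end results coincide.
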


\begin{proof}
We take $b \in \bH^+(B)$ and $w\in B$ such that the condition $\|w\| \leq 2 (1-\epsilon) \|\Im(b)^{-1}\|^{-1}$ is satisfied for some $\epsilon\in (0,1)$. In this situation, Lemma \ref{lem:half-plane} \ref{it:half-plane-1} tells us that
$$\begin{bmatrix} b & w\\ 0 & b \end{bmatrix} \in \bH^+(M_2(B)).$$
Since $\eta$ is supposed to be $2$-positive, we may apply Proposition \ref{prop:opval_semicircular_Cauchy_matrix}; the formula \eqref{eq:Cauchy-matrix-derivative} yields
\begin{equation}\label{eq:Cauchy_derivatives_bound-0}
G_\eta^{(2)}\bigg(\begin{bmatrix} b & w\\ 0 & b \end{bmatrix}\bigg) = \begin{bmatrix} G_\eta(b) & (D G_\eta)(b) w \\ 0 & G_\eta(b) \end{bmatrix}.
\end{equation}
Since $G_\eta^{(2)}$ takes its values in $\bH^-(M_2(B))$, we conclude from the latter with the help of Lemma \ref{lem:half-plane} \ref{it:half-plane-2} that $\|(D G_\eta)(b) w\| < 2 \|\Im(G_\eta(b))\|$. By rescaling and using \eqref{eq:Cauchy_bound-1}, we infer that every $w\in B$ with $\|w\| \leq 1$ satisfies
$$\|(D G_\eta)(b) w\| < \frac{1}{1-\epsilon} \|\Im(G_\eta(b))\| \|\Im(b)^{-1}\| \leq \frac{1}{1-\epsilon} \|\Im(b)^{-1}\|^2.$$
By letting $\epsilon\searrow0$, we obtain \eqref{eq:Cauchy_derivatives_bound-1}, the first of the asserted formulas.

In order to prove \eqref{eq:Cauchy_derivatives_bound-2}, we proceed as follows. First of all, we deduce from \eqref{eq:Cauchy_derivatives_bound-0} that
$$\begin{bmatrix} \Im(G_\eta(b)) & \frac{1}{2i} \big((D G_\eta)(b) w\big) \\ -\frac{1}{2i} \big((D G_\eta)(b) w\big)^\ast & \Im(G_\eta(b)) \end{bmatrix} = \Im\bigg(\begin{bmatrix} G_\eta(b) & (D G_\eta)(b) w \\ 0 & G_\eta(b) \end{bmatrix} \bigg) \leq 0.$$
By applying the amplification $\phi^{(2)}$ of $\phi$ to the latter, we conclude that the scalar matrix
$$\begin{bmatrix} \phi\big(\Im(G_\eta(b))\big) & \frac{1}{2i} \phi\big((D G_\eta)(b) w\big) \\ -\frac{1}{2i} \overline{\phi\big((D G_\eta)(b) w\big)} & \phi\big(\Im(G_\eta(b))\big) \end{bmatrix}$$
is negative semidefinite; hence, we get that $\frac{1}{2} |\phi((D G_\eta)(b) w)| \leq -\phi(\Im(G_\eta(b)))$. By rescaling, we infer from the latter that if $w \in B$ is such that $\|w\| \leq 1$, then
$$\big|\phi\big((D G_\eta)(b) w\big)\big| \leq - \frac{1}{1-\epsilon} \phi(\Im(G_\eta(b))) \|\Im(b)^{-1}\|.$$
By letting $\epsilon\searrow0$, we get \eqref{eq:Cauchy_derivatives_bound-2}, the second of the asserted formulas.
\end{proof}

The bound \eqref{eq:Cauchy_derivatives_bound-1} can be used to derive, for every $\gamma>0$, Lipschitz continuity of $G_\eta$ on $\overline{\bH^+_\gamma(B)}$ with Lipschitz constant $\frac{1}{\gamma^2}$. We prove a more general result; cf. \cite[Proposition 3.1]{Belinschi2017}.

\begin{lemma}\label{lem:Lipschitz_bound}
Suppose that $\eta \in \P_2(B)$. Then, for all $b_0,b_1\in \bH^+(B)$, we have that
\begin{equation}\label{eq:Lipschitz_bound}
\|G_\eta(b_1)-G_\eta(b_0)\| \leq \|\Im(b_0)^{-1}\| \|\Im(b_1)^{-1}\| \|b_1-b_0\|.
\end{equation}
\end{lemma}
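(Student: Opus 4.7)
The plan is to prove \eqref{eq:Lipschitz_bound} by interpolating linearly between $b_0$ and $b_1$ and integrating the Fréchet derivative $D G_\eta$ along the resulting segment, then invoking the pointwise bound $\|(D G_\eta)(b)\| \leq \|\Im(b)^{-1}\|^2$ from Lemma \ref{lem:Cauchy_derivatives_bound}\,\eqref{eq:Cauchy_derivatives_bound-1}, which is precisely where the $2$-positivity of $\eta$ enters. The final $B$-valued bound will then reduce to a one-dimensional integral that can be evaluated explicitly.

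First, I would set $b_t := (1-t) b_0 + t b_1$ for $t\in [0,1]$ and verify that the segment stays in $\bH^+(B)$ with a controlled lower bound on the imaginary part. Writing $\alpha := \|\Im(b_0)^{-1}\|^{-1}$ and $\beta := \|\Im(b_1)^{-1}\|^{-1}$, one has $\Im(b_i) \geq \|\Im(b_i)^{-1}\|^{-1} \1$ for $i=0,1$, so by linearity of $\Im$
$$\Im(b_t) = (1-t)\Im(b_0) + t\Im(b_1) \geq \bigl((1-t)\alpha + t\beta\bigr)\1,$$
from which $b_t\in\bH^+(B)$ and $\|\Im(b_t)^{-1}\| \leq \bigl((1-t)\alpha + t\beta\bigr)^{-1}$.

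Next, since $G_\eta: \bH^+(B) \to \bH^-(B)$ is holomorphic by Theorem \ref{thm:opval_semicircular_Cauchy}, it is Fréchet holomorphic (as noted in Section \ref{subsec:holomorphy}); combined with the fact that $t\mapsto b_t$ is affine, the chain rule for Banach-space-valued functions yields that $F: [0,1]\to B$, $F(t) := G_\eta(b_t)$, is of class $C^1$ with $F'(t) = (D G_\eta)(b_t)\bigl[b_1 - b_0\bigr]$. The fundamental theorem of calculus then gives
$$G_\eta(b_1) - G_\eta(b_0) = \int_0^1 (DG_\eta)(b_t)\bigl[b_1-b_0\bigr]\,dt,$$
and passing to norms under the Bochner integral produces
$$\|G_\eta(b_1) - G_\eta(b_0)\| \leq \|b_1-b_0\| \int_0^1 \|(DG_\eta)(b_t)\|\,dt.$$

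Finally, I would invoke Lemma \ref{lem:Cauchy_derivatives_bound}\,\eqref{eq:Cauchy_derivatives_bound-1} to get $\|(D G_\eta)(b_t)\| \leq \|\Im(b_t)^{-1}\|^2$, insert the interpolation bound from the first step, and compute the resulting scalar integral:
$$\int_0^1 \bigl((1-t)\alpha + t\beta\bigr)^{-2}\,dt = \frac{1}{\alpha\beta} = \|\Im(b_0)^{-1}\|\,\|\Im(b_1)^{-1}\|,$$
which gives exactly \eqref{eq:Lipschitz_bound}. I do not expect any serious obstacle: the only point that requires care is the application of the chain rule, since formally $G_\eta$ is a priori only Gâteaux holomorphic and locally bounded, but the equivalence of these notions with Fréchet holomorphy recalled in Section \ref{subsec:holomorphy} makes this routine, and Lemma \ref{lem:Taylor_bound_Cauchy} actually provides more than enough regularity to differentiate $t\mapsto G_\eta(b_t)$ on the entire segment.
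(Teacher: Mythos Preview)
Your proposal is correct and follows essentially the same approach as the paper: linear interpolation $b_t=(1-t)b_0+tb_1$, differentiation via the chain rule, the pointwise bound $\|(DG_\eta)(b_t)\|\le\|\Im(b_t)^{-1}\|^2$ from Lemma~\ref{lem:Cauchy_derivatives_bound}, and the explicit evaluation $\int_0^1((1-t)\alpha+t\beta)^{-2}\,dt=(\alpha\beta)^{-1}$. The only difference is cosmetic---you spell out the regularity justification for the chain rule a bit more carefully than the paper does.
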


\begin{proof}
Fix $b_0,b_1 \in \bH^+(B)$. We consider the function $f: [0,1] \to \bH^-(B)$ which is defined by $f(t) := G_\eta(t b_1 + (1-t) b_0)$ for $t\in [0,1]$. Then $f$ is differentiable as a $B$-valued function with $f'(t) = (D G_\eta)(t b_1 + (1-t) b_0) (b_1 - b_0)$. By using the bound \eqref{eq:Cauchy_derivatives_bound-1} provided by Lemma \ref{lem:Cauchy_derivatives_bound}, we observe that
$$\|f'(t)\| \leq \big\|\big(t \Im(b_1) + (1-t) \Im(b_0)\big)^{-1}\big\|^2 \|b_1-b_0\|.$$
Since $t \Im(b_1) + (1-t) \Im(b_0) \geq (t \|\Im(b_1)^{-1}\|^{-1} + (1-t) \|\Im(b_0)^{-1}\|^{-1})\, \1$, we have
$$\big\|\big(t \Im(b_1) + (1-t) \Im(b_0)\big)^{-1}\big\| \leq \big(t \|\Im(b_1)^{-1}\|^{-1} + (1-t) \|\Im(b_0)^{-1}\|^{-1}\big)^{-1}.$$
By using $\int^1_0 (\gamma_1 t + \gamma_0 (1-t))^{-2}\, dt = (\gamma_0\gamma_1)^{-1}$ for every $\gamma_0,\gamma_1>0$, we finally obtain that
$$\|G_\eta(b_1) - G_\eta(b_0)\| \leq \int^1_0 \|f'(t)\|\, dt \leq \|\Im(b_0)^{-1}\| \|\Im(b_1)^{-1}\| \|b_1-b_0\|,$$
which is the asserted bound \eqref{eq:Lipschitz_bound}.
\end{proof}

Let us point out that an alternative proof of \eqref{eq:Lipschitz_bound} can be given by evaluating $G^{(2)}_\eta$ at the point $\begin{bmatrix} b_0 & r (b_1 - b_0)\\ 0 & b_1 \end{bmatrix}$ for sufficiently small $r>0$ (using the identity \eqref{eq:intertwining} together with the fact that $G^{(\leq 2)}_\eta$ is a noncommutative function of height $2$ by Proposition \ref{prop:opval_semicircular_Cauchy_matrix}) and examining the result with Lemma \ref{lem:half-plane} in analogy to the proof of Lemma \ref{lem:Cauchy_derivatives_bound}; see also Remark \ref{rem:Lipschitz_bound_details}.

From Lemma \ref{lem:Lipschitz_bound}, we can derive Lipschitz continuity of $D G_\eta$ on $\overline{\bH^+_\gamma(B)}$ for every $\gamma>0$.

\begin{lemma}\label{lem:Lipschitz_bound_derivative}
There is a universal constant $c>0$ such that, for all $\eta \in \P_2(B)$ and $\gamma>0$,
\begin{equation}\label{eq:Lipschitz_bound_derivative}
\|(DG_\eta)(b_1)-(DG_\eta)(b_0)\| \leq \frac{c}{\gamma^3}\, \|b_1-b_0\| \qquad\text{for all $b_0,b_1\in \overline{\bH^+_\gamma(B)}$}.
\end{equation}
\end{lemma}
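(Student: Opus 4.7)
The natural strategy is to bootstrap the Lipschitz bound for $G_\eta$ furnished by Lemma \ref{lem:Lipschitz_bound} into a Lipschitz bound for $DG_\eta$, paying one extra factor of $\gamma^{-1}$ via a one-variable Cauchy-type estimate. Fix $b_0, b_1 \in \overline{\bH^+_\gamma(B)}$, a direction $h \in B$ with $\|h\|\leq 1$, and a radius $r \in (0,\gamma)$. Since $G_\eta$ is Fr\'echet holomorphic (Theorem \ref{thm:opval_semicircular_Cauchy}), for each $i = 0,1$ the map $z \mapsto G_\eta(b_i + zh)$ is a bona fide holomorphic $B$-valued function of one complex variable on the disk $|z| < \gamma/\|h\|$, since on that disk $\Im(b_i + zh) \geq (\gamma - |z|\|h\|)\1 > 0$. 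I would then recover $(DG_\eta)(b_i)h$ as the first Taylor coefficient of this one-variable expansion via the standard Cauchy representation
\begin{equation*}
(DG_\eta)(b_i)h = \frac{1}{2\pi i}\oint_{|z|=r} \frac{G_\eta(b_i + zh)}{z^2}\, dz.
\end{equation*}

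Subtracting the two representations and estimating the integrand using Lemma \ref{lem:Lipschitz_bound} --- for $|z|=r$ both points $b_0+zh$ and $b_1+zh$ lie in $\overline{\bH^+_{\gamma-r}(B)}$, hence $\|G_\eta(b_1+zh) - G_\eta(b_0+zh)\| \leq (\gamma-r)^{-2}\|b_1-b_0\|$ --- yields
\begin{equation*}
\|(DG_\eta)(b_1)h - (DG_\eta)(b_0)h\| \leq \frac{1}{r(\gamma-r)^2}\,\|b_1-b_0\|
\end{equation*}
after the usual length-times-supremum bound for the contour integral. Optimizing $r$ over $(0,\gamma)$ --- the minimum of $r(\gamma-r)^2$ is attained at $r = \gamma/3$ and equals $4\gamma^3/27$ --- produces the uniform bound $\frac{27}{4\gamma^3}\|b_1-b_0\|$, and taking the supremum over $h$ with $\|h\|\leq 1$ gives \eqref{eq:Lipschitz_bound_derivative} with, say, $c = 27/4$.

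The only step demanding mild justification is the Cauchy representation of $(DG_\eta)(b_i)h$ as a contour integral, but this is standard once one knows that $z \mapsto G_\eta(b_i + zh)$ is a holomorphic $B$-valued function of one complex variable, which follows immediately from Fr\'echet holomorphy of $G_\eta$; all remaining ingredients --- Lemma \ref{lem:Lipschitz_bound}, the elementary estimate on contour integrals, and the single-variable optimization in $r$ --- are routine. Notably, no further matricial amplification of $\eta$ beyond the $2$-positivity already absorbed by Lemma \ref{lem:Lipschitz_bound} is required. An alternative but essentially equivalent route would apply Lemma \ref{lem:Cauchy_Taylor_estimates} \ref{it:Cauchy_estimate} directly to $DG_\eta$ on the ball $\B_B(b_0,r) \subset \overline{\bH^+_{\gamma-r}(B)}$ using the uniform bound $\|DG_\eta(\cdot)\|\leq (\gamma-r)^{-2}$ from \eqref{eq:Cauchy_derivatives_bound-1}, followed by integration along the straight line from $b_0$ to $b_1$.
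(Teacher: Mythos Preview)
Your proof is correct and is essentially the same as the paper's: the paper defines $f(b) := G_\eta(b_1+b) - G_\eta(b_0+b)$ on $\B_B(0,\sigma\gamma)$, bounds $M_f(0,\sigma\gamma) \leq \frac{1}{(1-\sigma)^2\gamma^2}\|b_1-b_0\|$ via Lemma~\ref{lem:Lipschitz_bound}, and then applies Lemma~\ref{lem:Cauchy_Taylor_estimates}~\ref{it:Cauchy_estimate} to obtain $\|(Df)(0)\| \leq \frac{1}{\sigma(1-\sigma)^2\gamma^3}\|b_1-b_0\|$, optimizing to $c=27/4$. Your one-variable Cauchy integral is exactly what underlies Lemma~\ref{lem:Cauchy_Taylor_estimates}~\ref{it:Cauchy_estimate}, and after the substitution $r=\sigma\gamma$ the two computations coincide line by line, including the optimal constant.
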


\begin{proof}
Choose $\sigma \in (0,1)$. Like in the proof of Lemma \ref{lem:Taylor_bound_Cauchy}, we infer from \eqref{eq:D-inclusion} that $\B_B(b,\sigma\gamma) \subseteq \D(b,\sigma) \subset \bH^+_{(1-\sigma)\gamma}(B)$ for every $b\in \overline{\bH^+_\gamma(B)}$. Thus, $f(b) := G_\eta(b_1 + b) - G_\eta(b_0 + b)$ yields a well-defined holomorphic function $f: \B_B(0,\sigma\gamma) \to B$ satisfying $(Df)(0) = (DG_\eta)(b_1) - (DG_\eta)(b_0)$ and moreover $M_f(0,\sigma\gamma) \leq \frac{1}{(1-\sigma)^2 \gamma^2} \|b_1 - b_0\|$ according \eqref{eq:Lipschitz_bound}. With the help of the Cauchy estimate stated in Lemma \ref{lem:Cauchy_Taylor_estimates} \ref{it:Cauchy_estimate}, we derive that $\|(DG_\eta)(b_1) - (DG_\eta)(b_0)\| \leq \frac{1}{\sigma (1-\sigma)^2} \frac{1}{\gamma^3} \|b_1 - b_0\|$, from which \eqref{eq:Lipschitz_bound_derivative} follows with $c = \min_{\sigma \in (0,1)} \frac{1}{\sigma (1-\sigma)^2} = \frac{27}{4}$.
\end{proof}

\section{The operator-valued inviscid Burgers equation for paths in $\P_2(B)$}\label{sec:Burgers_equation}

This section is about an operator-valued version of the inviscid Burgers equation for paths in $\P_2(B)$.
In the scalar-valued framework, the fundamental insight that the inviscid Burgers equation constitutes the free probability counterpart of the heat equation is due to Voiculescu; see \cite{Voi1986}, especially the comments following the proof of Theorem 4.3 therein. In \cite[Theorem 8.3]{ABFN2013}, Anshelevich, Belinschi, Fevrier, and Nica generalized the aforementioned result of Voiculescu by an operator-valued version of the inviscid Burgers equation. On the level of formal series, a variant thereof has already appeared in \cite[Proposition 4.5]{Voi1995}.

In the literature on noncommutative probability and on free probability in particular, there are numerous other places where such a dynamical perspective turned out to be extremely successful; we refer the interested readers for instance to \cite{Biane1998,Bauer2004,CK2014,IU2015,AW2016,Schleissinger2017,FHS2020,Jekel2020,JLS2022} and the references listed therein.

With Theorem \ref{thm:Burgers}, we generalize \cite[Theorem 8.3]{ABFN2013} from invertible completely positive maps to arbitrary $2$-positive maps, to the relatively small price that we are limited to initial conditions which are themselves solutions of some Dyson equation; this, however, is entirely sufficient for the intended application of Theorem \ref{thm:Burgers} in the proof of Theorem \ref{thm:Levy_Hoelder}.

Theorem \ref{thm:Burgers} builds upon Theorem \ref{thm:continuity}, the proof of which will also be given in this section. Both proofs will make use of the following fact, which we borrow from \cite{HMS2021}; because it is stated there in slightly different form and under the much more restrictive assumption of complete positivity instead of $2$-positivity, we include here also the proof.

\begin{proposition}\label{prop:opval_semicircular_approximation}
Let $B$ be a unital $C^\ast$-algebra and let $\eta \in \P_2(B)$ be given. Define holomorphic functions $\Psi_\eta: \bH^-(B) \to B$ by $\Psi_\eta(w) := w^{-1} + \eta(w)$ and $\Delta_{b,\eta}: \bH^-(B) \to B$ by $\Delta_{b,\eta}(w) := b - \Psi_\eta(w)$ for some fixed $b\in \bH^+(B)$. Furthermore, we set
\begin{IEEEeqnarray*}{lCll}
\U_{b,\eta} &:= &\{w \in \bH^-(B) \mid \|\Delta_{b,\eta}(w)\| < \|\Im(b)^{-1}\|^{-1}\} \qquad &\text{and}\\
\U^\sigma_{b,\eta} &:= &\{w \in \bH^-(B) \mid \|\Delta_{b,\eta}(w)\| \leq \sigma \|\Im(b)^{-1}\|^{-1}\} \qquad &\text{for $\sigma \in (0,1)$}.
\end{IEEEeqnarray*}
Then the following statements hold true:
\begin{enumerate}
 \item\label{it:Cauchy_local-inverse} $\Psi_\eta$ restricts to a holomorphic function $\Psi_\eta: \U_{b,\eta} \to \bH^+(B)$ which satisfies $G_\eta(\Psi_\eta(w)) = w$ for all $w \in \U_{b,\eta}$. Further, for every $\sigma\in(0,1)$, we have that $$\Psi_\eta(\U^\sigma_{b,\eta}) \subseteq \overline{\D(b,\sigma)} \subset \overline{\bH^+_\gamma(B)} \qquad\text{for}\qquad \gamma := (1-\sigma) \|\Im(b)^{-1}\|^{-1}.$$
 \item\label{it:approximate_solution-proximity} Put $w_0 := G_\eta(b) \in\bH^-(B)$. Then, for every $\sigma\in(0,1)$ and each $w \in \U^\sigma_{b,\eta}$, we have
\begin{equation}\label{eq:approximate_solution-proximity}
\|w - w_0\| \leq \frac{1}{1-\sigma} \|\Im(b)^{-1}\|^2 \|\Delta_{b,\eta}(w)\| \leq \frac{\sigma}{1-\sigma} \|\Im(b)^{-1}\|.
\end{equation}
\end{enumerate}
\end{proposition}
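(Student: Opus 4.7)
The plan is to treat $\Psi_\eta$ as a local right inverse of $G_\eta$: once one verifies that $\Psi_\eta(w)$ lies in $\bH^+(B)$, the defining identity $\Psi_\eta(w) = w^{-1} + \eta(w)$ is literally the Dyson equation rewritten, which forces $w = G_\eta(\Psi_\eta(w))$ via the uniqueness clause of Theorem \ref{thm:opval_semicircular_Cauchy}. Item \ref{it:approximate_solution-proximity} will then follow by feeding the inclusion from \ref{it:Cauchy_local-inverse} into the Lipschitz bound of Lemma \ref{lem:Lipschitz_bound}, which is the only step that actually uses the $2$-positivity of $\eta$.

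For \ref{it:Cauchy_local-inverse}, the first task is to bound $\Im(\Psi_\eta(w))$ from below. Writing $\Psi_\eta(w) = b - \Delta_{b,\eta}(w)$ and combining $\Im(b) \geq \|\Im(b)^{-1}\|^{-1}\1$ with the operator bound $\pm \Im(\Delta_{b,\eta}(w)) \leq \|\Delta_{b,\eta}(w)\|\1$ yields
$$\Im(\Psi_\eta(w)) \geq \left( \|\Im(b)^{-1}\|^{-1} - \|\Delta_{b,\eta}(w)\| \right) \1.$$
On $\U_{b,\eta}$ the right-hand side is strictly positive, so $\Psi_\eta(\U_{b,\eta}) \subseteq \bH^+(B)$, and holomorphy of the restriction is inherited from $\Psi_\eta: \bH^-(B) \to B$. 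Restricting the same estimate to $\U^\sigma_{b,\eta}$ gives $\Im(\Psi_\eta(w)) \geq \gamma \1$ for $\gamma = (1-\sigma)\|\Im(b)^{-1}\|^{-1}$, i.e.\ $\Psi_\eta(w) \in \overline{\bH^+_\gamma(B)}$, while $\|\Psi_\eta(w) - b\| = \|\Delta_{b,\eta}(w)\| \leq \sigma\|\Im(b)^{-1}\|^{-1}$ simultaneously places it in $\overline{\D(b,\sigma)}$. To identify $G_\eta(\Psi_\eta(w))$ with $w$, I would multiply $\Psi_\eta(w) = w^{-1} + \eta(w)$ on the right by $w$ (legitimate because every $w \in \bH^-(B)$ is invertible) to obtain $\Psi_\eta(w)\, w = \1 + \eta(w)\, w$; this is the Dyson equation \eqref{eq:opval_semicircular_Cauchy} at the point $\Psi_\eta(w)$, so Theorem \ref{thm:opval_semicircular_Cauchy} forces $w = G_\eta(\Psi_\eta(w))$.

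For \ref{it:approximate_solution-proximity}, set $b' := \Psi_\eta(w)$. Part \ref{it:Cauchy_local-inverse} supplies $w = G_\eta(b')$ together with $b' \in \overline{\bH^+_\gamma(B)}$, so $\|\Im(b')^{-1}\| \leq (1-\sigma)^{-1}\|\Im(b)^{-1}\|$; using $\|b' - b\| = \|\Delta_{b,\eta}(w)\|$ in the Lipschitz bound of Lemma \ref{lem:Lipschitz_bound} yields
$$\|w - w_0\| \leq \|\Im(b')^{-1}\|\, \|\Im(b)^{-1}\|\, \|b' - b\| \leq \frac{1}{1-\sigma}\, \|\Im(b)^{-1}\|^2\, \|\Delta_{b,\eta}(w)\|,$$
which is the first bound in \eqref{eq:approximate_solution-proximity}; the second one then follows by substituting $\|\Delta_{b,\eta}(w)\| \leq \sigma\|\Im(b)^{-1}\|^{-1}$. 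I do not anticipate any genuine obstacle here: the only nuance is recognising that the radius $\|\Im(b)^{-1}\|^{-1}$ appearing in the definition of $\U_{b,\eta}$ is precisely the threshold that keeps $\Psi_\eta(w)$ inside $\bH^+(B)$; once this is in place, the rest is bookkeeping organised around Lemma \ref{lem:Lipschitz_bound}.
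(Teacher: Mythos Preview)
Your proposal is correct and follows essentially the same route as the paper's proof: you show $\Psi_\eta(w)\in\bH^+(B)$ via the imaginary-part estimate (the paper routes this through the inclusion $\overline{\D(b,\sigma)}\subset\overline{\bH^+_\gamma(B)}$ of \eqref{eq:D-inclusion}, which is the same computation), then invoke uniqueness in the Dyson equation to get $G_\eta(\Psi_\eta(w))=w$, and for part~\ref{it:approximate_solution-proximity} you apply Lemma~\ref{lem:Lipschitz_bound} exactly as the paper does.
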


Note that $\U_{b,\eta}$ is an open set containing $w_0$, which is the unique solution of the equation \eqref{eq:opval_semicircular} in $\bH^-(B)$; indeed, we have $\Delta_{b,\eta}(w_0)=0$ so that in fact $w_0 \in \U^\sigma_{b,\eta}$ for every $\sigma \in (0,1)$. Accordingly, we regard points in $\U_{b,\eta}^\sigma$ as approximate solutions of \eqref{eq:opval_semicircular}, which by \eqref{eq:approximate_solution-proximity} are getting closer to the accurate solution $w_0$ when $\sigma$ tends to $0$.

\begin{proof}[Proof of Proposition \ref{prop:opval_semicircular_approximation}]
(i) It is obvious that $\Psi_\eta$ and hence $\Delta_{b,\eta}$ define $B$-valued holomorphic functions on $\bH^-(B)$. Take any $\sigma\in(0,1)$. Notice that $\|\Psi_\eta(w) - b\| = \|\Delta_{b,\eta}(w)\| \leq \sigma \|\Im(b)^{-1}\|^{-1}$ and hence $\Psi_\eta(w) \in \overline{\D(b,\sigma)}$ for $w \in\U^\sigma_{b,\eta}$; this proves that $\Psi_\eta(\U^\sigma_{b,\eta}) \subseteq \overline{\D(b,\sigma)} \subset \overline{\bH^+_\gamma(B)}$ for $\gamma = (1-\sigma) \|\Im(b)^{-1}\|^{-1}$, where the second inclusion comes from \eqref{eq:D-inclusion}. Because $\U_{b,\eta} = \bigcup_{\sigma\in(0,1)} \U^\sigma_{b,\eta}$, it follows that the restriction of $\Psi_\eta$ to $\U_{b,\eta}$ takes its values in $\bH^+(B)$.

Next, we verify that $G_\eta(\Psi_\eta(w)) = w$ for all $w \in \U_{b,\eta}$. To this end, we take any $w \in \U_{b,\eta}$. First, we observe that $\Psi_\eta(w) w = \1 + \eta(w) w$. This tells us that $w$ is the unique solution of \eqref{eq:opval_semicircular} in $\bH^-(B)$ at the point $\Psi_\eta(w) \in \bH^+(B)$, which implies $G_\eta(\Psi_\eta(w)) = w$, as we wished to show.

(ii) Take any $w\in \U^\sigma_{b,\eta}$. From \ref{it:Cauchy_local-inverse}, we know $\Psi_\eta(w) \in \overline{\bH^+_\gamma(B)}$ for $\gamma = (1-\sigma) \|\Im(b)^{-1}\|^{-1}$; consequently, $\|\Im(\Psi_\eta(w))^{-1}\| \leq \frac{1}{1-\sigma} \|\Im(b)^{-1}\|$.
With the help of the bound \eqref{eq:Lipschitz_bound}, we get
\begin{multline*}
\|w - w_0\| = \|G_\eta(\Psi_\eta(w))-G_\eta(b)\|
            \leq \|\Im(\Psi_\eta(w))^{-1}\| \|\Im(b)^{-1}\| \|\Delta_{b,\eta}(w)\|\\
						\leq \frac{1}{1-\sigma} \|\Im(b)^{-1}\|^2 \|\Delta_{b,\eta}(w)\|
						\leq \frac{\sigma}{1-\sigma} \|\Im(b)^{-1}\|
\end{multline*}
for every $w\in \U^\sigma_{b,\eta}$, which are the bounds asserted in \eqref{eq:approximate_solution-proximity}.
\end{proof}

We continue these investigations with the following two propositions which allow a local comparison of solutions $G_{\eta_0}$ and $G_{\eta_1}$ of the Dyson equation \eqref{eq:opval_semicircular_Cauchy} for $\eta_0,\eta_1 \in \P_2(B)$.
With the first one, which builds on Proposition \ref{prop:opval_semicircular_approximation} \ref{it:Cauchy_local-inverse}, we show that $G_{\eta_0}$ and $G_{\eta_1}$ are locally subordinated (to each other) whenever $\eta_0$ and $\eta_1$ in $\P_2(B)$ are sufficiently close and we control in terms of $\|\eta_1-\eta_0\|$ the deviation of the subordination function from the identity.

\begin{proposition}\label{prop:local_subordination}
Fix $\gamma > 0$ and $0 < \sigma' < \sigma  < 1$. Suppose that $\eta_0,\eta_1 \in \P_2(B)$ are such that the condition $\|\eta_1-\eta_0\| \leq (1-\sigma')(\sigma-\sigma') \gamma^2$ is satisfied. Then, for each $b_0 \in \overline{\bH^+_\gamma(B)}$, we have the inclusion $G_{\eta_1}(\overline{\D(b_0,\sigma')}) \subseteq \U^\sigma_{b_0,\eta_0}$, so that we obtain a well-defined function by
$$\omega^{b_0}_{\eta_0 \to \eta_1}:\ \overline{\D(b_0,\sigma')} \to \overline{\D(b_0,\sigma)},\quad b \mapsto \Psi_{\eta_0}(G_{\eta_1}(b)),$$
which is holomorphic on $\D(b_0,\sigma')$ and furthermore has the property that $G_{\eta_0}(\omega^{b_0}_{\eta_0 \to \eta_1}(b)) = G_{\eta_1}(b)$ holds for all $b\in \overline{\D(b_0,\sigma')}$. Moreover, we have that
\begin{align}
\label{eq:local_subordination-1}\sup_{b_0 \in \overline{\bH^+_\gamma(B)}}\ \sup_{b\in \overline{\D(b_0,\sigma')}} \|\omega^{b_0}_{\eta_0 \to \eta_1}(b) - b\| &\leq \frac{1}{(1-\sigma')\gamma} \|\eta_1 - \eta_0\| \qquad\text{and}\\
\label{eq:local_subordination-2}\sup_{b_0 \in \overline{\bH^+_\gamma(B)}} \|(D\omega^{b_0}_{\eta_0 \to \eta_1})(b_0) - \id_B\| &\leq \frac{1}{\sigma'(1-\sigma')\gamma^2} \|\eta_1 - \eta_0\|.
\end{align}
\end{proposition}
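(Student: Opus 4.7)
The plan hinges on a single algebraic identity. The Dyson equation for $\eta_1$ reads $G_{\eta_1}(b)^{-1} = b - \eta_1(G_{\eta_1}(b))$, so substituting $w := G_{\eta_1}(b)$ into $\Delta_{b_0,\eta_0}(w) = b_0 - w^{-1} - \eta_0(w)$ yields
\begin{equation*}
\Delta_{b_0,\eta_0}(G_{\eta_1}(b)) = (b_0 - b) + (\eta_1 - \eta_0)(G_{\eta_1}(b)).
\end{equation*}
Everything asserted in the proposition will cascade from this identity together with the structural information about $\Psi_{\eta_0}$ already packaged in Proposition \ref{prop:opval_semicircular_approximation}.

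The inclusion $G_{\eta_1}(\overline{\D(b_0,\sigma')}) \subseteq \U^\sigma_{b_0,\eta_0}$ is then straightforward. For $b \in \overline{\D(b_0,\sigma')}$ the first summand is at most $\sigma'\|\Im(b_0)^{-1}\|^{-1}$ in norm, while \eqref{eq:D-inclusion} places $b$ in $\overline{\bH^+_{(1-\sigma')\|\Im(b_0)^{-1}\|^{-1}}(B)}$, so \eqref{eq:Cauchy_bound-1} gives $\|G_{\eta_1}(b)\| \leq \|\Im(b_0)^{-1}\|/(1-\sigma')$. Combining these with $\|\Im(b_0)^{-1}\| \leq 1/\gamma$, the hypothesis $\|\eta_1-\eta_0\| \leq (1-\sigma')(\sigma-\sigma')\gamma^2$ is exactly what is required to conclude $\|\Delta_{b_0,\eta_0}(G_{\eta_1}(b))\| \leq \sigma\|\Im(b_0)^{-1}\|^{-1}$. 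Proposition \ref{prop:opval_semicircular_approximation}(i) then immediately gives that $\omega^{b_0}_{\eta_0 \to \eta_1} := \Psi_{\eta_0} \circ G_{\eta_1}$ is well defined on $\overline{\D(b_0,\sigma')}$, takes values in $\overline{\D(b_0,\sigma)}$, is holomorphic on the interior as a composition of holomorphic functions, and satisfies $G_{\eta_0} \circ \omega^{b_0}_{\eta_0 \to \eta_1} = G_{\eta_1}$.

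For the two quantitative bounds, I would use $\Psi_{\eta_0}(w) = b_0 - \Delta_{b_0,\eta_0}(w)$ to rearrange the key identity as
\begin{equation*}
\omega^{b_0}_{\eta_0 \to \eta_1}(b) - b = -(\eta_1 - \eta_0)(G_{\eta_1}(b)).
\end{equation*}
The same estimate $\|G_{\eta_1}(b)\| \leq 1/((1-\sigma')\gamma)$ derived in the previous step instantly yields \eqref{eq:local_subordination-1}, uniformly in $b_0 \in \overline{\bH^+_\gamma(B)}$ and $b \in \overline{\D(b_0,\sigma')}$, since every constant on the right depends only on $\gamma$ and $\sigma'$. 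For \eqref{eq:local_subordination-2} I would invoke the Cauchy estimate in Lemma \ref{lem:Cauchy_Taylor_estimates}(i) applied to the holomorphic function $f(b) := \omega^{b_0}_{\eta_0 \to \eta_1}(b) - b$ on $\D(b_0,\sigma')$, a ball of radius $\sigma'\|\Im(b_0)^{-1}\|^{-1} \geq \sigma'\gamma$. Since $\sup_{\D(b_0,\sigma')} \|f\|$ is controlled by $\|\eta_1-\eta_0\|/((1-\sigma')\gamma)$ thanks to \eqref{eq:local_subordination-1}, dividing by the radius gives exactly $\|\eta_1-\eta_0\|/(\sigma'(1-\sigma')\gamma^2)$ at the centre $b=b_0$.

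No substantial obstacle is to be expected here: the entire argument is bookkeeping once the identity for $\Delta_{b_0,\eta_0}(G_{\eta_1}(b))$ is spotted. The only minor care points are verifying that the constants really are $b_0$-independent (which follows since $\|\Im(b_0)^{-1}\| \leq 1/\gamma$ is the only feature of $b_0$ used) and that the Cauchy estimate is applied with the correct radius, both of which are routine.
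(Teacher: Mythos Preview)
Your proposal is correct and follows essentially the same route as the paper: the key identity $\Delta_{b_0,\eta_0}(G_{\eta_1}(b)) = (b_0-b) + (\eta_1-\eta_0)(G_{\eta_1}(b))$ is exactly the content of the paper's Remark~\ref{rem:Delta}, the inclusion and the subordination identity come from Proposition~\ref{prop:opval_semicircular_approximation}~\ref{it:Cauchy_local-inverse} in the same way, the formula $\omega^{b_0}_{\eta_0\to\eta_1}(b)-b = -(\eta_1-\eta_0)(G_{\eta_1}(b))$ is the paper's route to \eqref{eq:local_subordination-1}, and the Cauchy estimate on $f(b)=\omega^{b_0}_{\eta_0\to\eta_1}(b)-b$ is precisely how the paper derives \eqref{eq:local_subordination-2}. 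The only cosmetic difference is that the paper applies the Cauchy estimate on the smaller ball $\B_B(b_0,\sigma'\gamma)\subseteq \D(b_0,\sigma')$ with explicit radius $\sigma'\gamma$, whereas you work on $\D(b_0,\sigma')$ and use that its radius is at least $\sigma'\gamma$; both yield the same bound.
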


Before we proceed to the proof of Proposition \ref{prop:local_subordination}, we collect in the following remark some useful identities to which we will come back in the sequel.

\begin{remark}\label{rem:Delta}
Let $\eta_0,\eta_1 \in \P_2(B)$, $\gamma>0$, and $\sigma\in(0,1)$ be given. For $b_0 \in \overline{\bH^+_\gamma(B)}$ and $b \in \overline{\D(b_0,\sigma)}$, we may compute using \eqref{eq:opval_semicircular_Cauchy} for $G_{\eta_1}$ that
\begin{equation}\label{eq:Delta-computation}
\Delta_{b_0,\eta_0}(G_{\eta_1}(b)) = b_0 - G_{\eta_1}(b)^{-1} - \eta_0(G_{\eta_1}(b)) = b_0 - b + (\eta_1 - \eta_0)(G_{\eta_1}(b))
\end{equation}
and in particular $\Delta_{b_0,\eta_0}(G_{\eta_1}(b_0)) = (\eta_1 - \eta_0)(G_{\eta_1}(b_0))$. From \eqref{eq:D-inclusion}, we know that $\overline{\D(b_0,\sigma)} \subset \overline{\bH^+_{(1-\sigma)\gamma}(B)}$; thanks to \eqref{eq:Cauchy_bound-1}, we thus have that $\|G_{\eta_1}(b)\| \leq \frac{1}{(1-\sigma)\gamma}$. It follows from \eqref{eq:Delta-computation} that
\begin{equation}\label{eq:Delta-bound}
\|\Delta_{b_0,\eta_0}(G_{\eta_1}(b))\| \leq \|b-b_0\| + \|\eta_1 - \eta_0\| \|G_{\eta_1}(b)\| \leq \sigma \|\Im(b_0)^{-1}\|^{-1} + \frac{1}{(1-\sigma)\gamma} \|\eta_1 - \eta_0\|
\end{equation}
and in particular $\|\Delta_{b_0,\eta_0}(G_{\eta_1}(b_0))\| \leq \frac{1}{\gamma} \|\eta_1-\eta_0\|$.
\end{remark}

\begin{proof}[Proof of Proposition \ref{prop:local_subordination}]
Suppose that $\eta_0,\eta_1 \in \P_2(B)$ satisfy the condition $\|\eta_1-\eta_0\| \leq (1-\sigma')(\sigma-\sigma') \gamma^2$. We take $b_0 \in \overline{\bH^+_\gamma(B)}$ and any $b \in \overline{\D(b_0,\sigma')}$. Using that $\gamma \leq \|\Im(b_0)^{-1}\|^{-1}$, we obtain with the aid of the bound \eqref{eq:Delta-bound} from Remark \ref{rem:Delta} that
$$\|\Delta_{b_0,\eta_0}(G_{\eta_1}(b))\| \leq \sigma' \|\Im(b_0)^{-1}\|^{-1} + (\sigma-\sigma') \gamma \leq \sigma \|\Im(b_0)^{-1}\|^{-1},$$
which means that $G_{\eta_1}(b) \in \U_{b_0,\eta_0}^\sigma$. This verifies the asserted inclusion $G_{\eta_1}(\overline{\D(b_0,\sigma')}) \subseteq \U^\sigma_{b_0,\eta_0}$.

From Proposition \ref{prop:opval_semicircular_approximation} \ref{it:Cauchy_local-inverse}, we learn that $\Psi_{\eta_0}(\U^\sigma_{b_0,\eta_0}) \subseteq \overline{\D(b_0,\sigma)}$ and $G_{\eta_0}(\Psi_{\eta_0}(w)) = w$ for all $w\in \U^\sigma_{b_0,\eta_0}$; hence, $\omega^{b_0}_{\eta_0 \to \eta_1}: \overline{\D(b_0,\sigma')} \to \overline{\D(b_0,\sigma)}$ is well-defined and satisfies $G_{\eta_0}(\omega^{b_0}_{\eta_0 \to \eta_1}(b)) = G_{\eta_1}(b)$ for all $b\in \overline{\D(b_0,\sigma')}$. The holomorphy of $\Psi_{\eta_0}$ and $G_{\eta_1}$ established in Proposition \ref{prop:opval_semicircular_approximation} respectively in Theorem \ref{thm:opval_semicircular_Cauchy} ensures that $\omega^{b_0}_{\eta_0 \to \eta_1}$ is holomorphic on $\D(b_0,\sigma')$, as asserted.

To prove \eqref{eq:local_subordination-1}, we take $b_0 \in \overline{\bH^+_\gamma(B)}$ and $b\in \overline{\D(b_0,\sigma')}$ and check with the help of Remark \ref{rem:Delta} that $b - \omega^{b_0}_{\eta_0 \to \eta_1}(b) = \Delta_{b,\eta_0}(G_{\eta_1}(b)) = (\eta_1 - \eta_0)(G_{\eta_1}(b))$ and in turn $\|\omega^{b_0}_{\eta_0 \to \eta_1}(b) - b\| \leq \frac{1}{(1-\sigma')\gamma} \|\eta_1 - \eta_0\|$. By taking suprema over $b\in \overline{\D(b_0,\sigma')}$ and $b_0 \in \overline{\bH^+_\gamma(B)}$, we get \eqref{eq:local_subordination-1}.

To prove \eqref{eq:local_subordination-2}, we proceed as follows. Take an arbitrary $b_0 \in \overline{\bH^+_\gamma(B)}$. Because $\B_B(b_0,\sigma'\gamma) \subseteq \D(b_0,\sigma')$, we get by $f: \B_B(b_0,\sigma'\gamma) \to B, b \mapsto \omega^{b_0}_{\eta_0 \to \eta_1}(b) - b$ a well-defined holomorphic function. From Lemma \ref{lem:Cauchy_Taylor_estimates} \ref{it:Cauchy_estimate}, we infer that
$$\|(D \omega^{b_0}_{\eta_0 \to \eta_1})(b_0) - \id_B\| = \|(Df)(b_0)\| \leq \frac{1}{\sigma'\gamma} M_f(b_0,\sigma'\gamma) \leq \frac{1}{\sigma'\gamma} \sup_{b\in \overline{\D(b_0,\sigma')}} \|\omega^{b_0}_{\eta_0 \to \eta_1}(b) - b\|.$$
By passing to the supremum over all $b_0 \in \overline{\bH^+_\gamma(B)}$ and employing \eqref{eq:local_subordination-1}, we arrive at \eqref{eq:local_subordination-2}.
\end{proof}

The second proposition exploits Proposition \ref{prop:opval_semicircular_approximation} \ref{it:approximate_solution-proximity}.

\begin{proposition}\label{prop:local_comparison}
Fix $\gamma > 0$ and $\sigma_0 \in (0,1)$. Suppose that $\eta_0,\eta_1 \in \P_2(B)$ are such that the condition $\|\eta_1-\eta_0\| \leq \sigma_0 \gamma^2$ is satisfied. Then, we have
\begin{equation}\label{eq:local_comparison}
\sup_{b \in \overline{\bH^+_\gamma(B)}} \|G_{\eta_1}(b) - G_{\eta_0}(b)\| \leq \frac{1}{(1-\sigma_0)\gamma^3} \|\eta_1-\eta_0\|
\end{equation}
and moreover, with the constant $c>0$ which appeared in Lemma \ref{lem:Lipschitz_bound_derivative},
\begin{equation}\label{eq:local_comparison_derivative}
\sup_{b \in \overline{\bH^+_\gamma(B)}} \|(D G_{\eta_1})(b) - (D G_{\eta_0})(b)\| \leq \frac{1}{\gamma^4} \Bigg[\min_{\substack{0 < \sigma' < \sigma < 1\colon\\ \sigma_0 = (1-\sigma')(\sigma-\sigma')}} \frac{1 - \sigma + c \sigma'}{\sigma'(1-\sigma') (1-\sigma)^3}\Bigg] \|\eta_1 - \eta_0\|.
\end{equation}
\end{proposition}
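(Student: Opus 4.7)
\smallskip

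\textbf{Plan for the proof.} My plan is to treat the two bounds separately, with \eqref{eq:local_comparison} serving as a fairly direct corollary of Proposition \ref{prop:opval_semicircular_approximation}, and \eqref{eq:local_comparison_derivative} as a consequence of differentiating the local subordination identity from Proposition \ref{prop:local_subordination} at the base point.

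For the first bound \eqref{eq:local_comparison}, I would fix $b \in \overline{\bH^+_\gamma(B)}$, set $w := G_{\eta_1}(b)$, and view $w$ as an approximate solution of the Dyson equation for $\eta_0$ at the point $b$. Specializing Remark \ref{rem:Delta} with $b_0 = b$ gives $\Delta_{b,\eta_0}(w) = (\eta_1-\eta_0)(G_{\eta_1}(b))$, and combining this with the bound $\|G_{\eta_1}(b)\| \leq 1/\gamma$ from \eqref{eq:Cauchy_bound-1} produces $\|\Delta_{b,\eta_0}(w)\| \leq \|\eta_1-\eta_0\|/\gamma$. The hypothesis $\|\eta_1-\eta_0\| \leq \sigma_0\gamma^2$ then yields $\|\Delta_{b,\eta_0}(w)\| \leq \sigma_0 \gamma \leq \sigma_0 \|\Im(b)^{-1}\|^{-1}$, so that $w \in \U^{\sigma_0}_{b,\eta_0}$. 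Applying the bound \eqref{eq:approximate_solution-proximity} of Proposition \ref{prop:opval_semicircular_approximation} \ref{it:approximate_solution-proximity} with $\sigma = \sigma_0$ and using $\|\Im(b)^{-1}\| \leq 1/\gamma$ once more, I get $\|G_{\eta_1}(b)-G_{\eta_0}(b)\| \leq \frac{1}{(1-\sigma_0)\gamma^3}\|\eta_1-\eta_0\|$, which is \eqref{eq:local_comparison}; the supremum over $b \in \overline{\bH^+_\gamma(B)}$ is then free.

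For the second bound \eqref{eq:local_comparison_derivative}, I would fix an admissible pair $0 < \sigma' < \sigma < 1$ with $\sigma_0 = (1-\sigma')(\sigma-\sigma')$ (so that the hypothesis of Proposition \ref{prop:local_subordination} is met) and fix $b_0 \in \overline{\bH^+_\gamma(B)}$. Write $\omega := \omega^{b_0}_{\eta_0 \to \eta_1}$ and $\tilde{b} := \omega(b_0) = \Psi_{\eta_0}(G_{\eta_1}(b_0)) \in \overline{\D(b_0,\sigma)} \subset \overline{\bH^+_{(1-\sigma)\gamma}(B)}$. Differentiating the subordination identity $G_{\eta_0} \circ \omega = G_{\eta_1}$ at $b_0$ gives, by the chain rule,
\begin{equation*}
(DG_{\eta_1})(b_0) - (DG_{\eta_0})(b_0) = (DG_{\eta_0})(\tilde{b})\bigl[(D\omega)(b_0) - \id_B\bigr] + \bigl[(DG_{\eta_0})(\tilde{b}) - (DG_{\eta_0})(b_0)\bigr].
\end{equation*}
I would then estimate the two summands: for the first, use $\|(DG_{\eta_0})(\tilde{b})\| \leq \|\Im(\tilde{b})^{-1}\|^2 \leq \frac{1}{(1-\sigma)^2\gamma^2}$ from Lemma \ref{lem:Cauchy_derivatives_bound} together with \eqref{eq:local_subordination-2}; for the second, invoke Lemma \ref{lem:Lipschitz_bound_derivative} with the scale $(1-\sigma)\gamma$ (both $b_0$ and $\tilde{b}$ lie in $\overline{\bH^+_{(1-\sigma)\gamma}(B)}$) together with \eqref{eq:local_subordination-1} to control $\|\tilde{b} - b_0\|$. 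Adding the two contributions yields an upper bound of the form
\begin{equation*}
\frac{1}{\gamma^4}\bigg[\frac{1}{(1-\sigma)^2 \sigma'(1-\sigma')} + \frac{c}{(1-\sigma)^3(1-\sigma')}\bigg]\|\eta_1-\eta_0\| = \frac{1}{\gamma^4}\cdot\frac{(1-\sigma)+c\sigma'}{\sigma'(1-\sigma')(1-\sigma)^3}\|\eta_1-\eta_0\|.
\end{equation*}
Taking the supremum over $b_0$ and the infimum over admissible $(\sigma',\sigma)$ gives \eqref{eq:local_comparison_derivative}.

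I do not anticipate a serious obstacle; the main technical care goes into verifying the half-plane inclusions $\tilde{b} \in \overline{\bH^+_{(1-\sigma)\gamma}(B)}$ and $b_0 \in \overline{\bH^+_{(1-\sigma)\gamma}(B)}$ so that Lemma \ref{lem:Lipschitz_bound_derivative} applies at the scale $(1-\sigma)\gamma$, and in bookkeeping the constants so that the two contributions combine to the precise bracketed expression being minimized in \eqref{eq:local_comparison_derivative}.
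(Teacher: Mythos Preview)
Your proposal is correct and follows essentially the same route as the paper. For \eqref{eq:local_comparison_derivative} your argument is identical to the paper's (chain rule on $G_{\eta_0}\circ\omega = G_{\eta_1}$, then bound the two summands via Lemma~\ref{lem:Cauchy_derivatives_bound} with \eqref{eq:local_subordination-2} and Lemma~\ref{lem:Lipschitz_bound_derivative} at scale $(1-\sigma)\gamma$ with \eqref{eq:local_subordination-1}); for \eqref{eq:local_comparison} your direct verification that $G_{\eta_1}(b)\in\U^{\sigma_0}_{b,\eta_0}$ is a mild streamlining of the paper, which instead routes through Proposition~\ref{prop:local_subordination} to get $G_{\eta_1}(b_0)\in\U^{\sigma}_{b_0,\eta_0}$ and then lets $\sigma'\searrow 0$ (so $\sigma\to\sigma_0$).
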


\begin{proof}
Let us consider arbitrary $0 < \sigma' < \sigma < 1$ such that $\sigma_0 = (1-\sigma')(\sigma-\sigma')$.

Take any $b_0 \in \overline{\bH^+_\gamma(B)}$. Thanks to Proposition \ref{prop:local_subordination}, we thus have that $G_{\eta_1}(\overline{\D(b_0,\sigma')}) \subseteq \U^\sigma_{b_0,\eta_0}$ and in particular $G_{\eta_1}(b_0) \in \U^\sigma_{b_0,\eta_0}$. Hence, we may apply Proposition \ref{prop:opval_semicircular_approximation} \ref{it:approximate_solution-proximity}, which gives us $\|G_{\eta_1}(b_0) - G_{\eta_0}(b_0)\| \leq \frac{1}{(1-\sigma)\gamma^2} \|\Delta_{b_0,\eta_0}(G_{\eta_1}(b_0))\|$. 
Further, due to Remark \ref{rem:Delta}, we have that $\|\Delta_{b_0,\eta_0}(G_{\eta_1}(b_0))\| \leq \frac{1}{\gamma} \|\eta_1-\eta_0\|$. 
In combination, we get $\|G_{\eta_1}(b_0) - G_{\eta_0}(b_0)\| \leq \frac{1}{(1-\sigma)\gamma^3} \|\eta_1-\eta_0\|$, which yields \eqref{eq:local_comparison} by taking the supremum over all $b_0 \in \overline{\bH^+_\gamma(B)}$ and letting $\sigma' \searrow 0$.

Take again any $b_0 \in \overline{\bH^+_\gamma(B)}$. From Proposition \ref{prop:local_subordination}, we know that $\omega^{b_0}_{\eta_0 \to \eta_1}: \D(b_0,\sigma') \to \overline{\D(b_0,\sigma)}$ is holomorphic and satisfies $G_{\eta_0}(\omega^{b_0}_{\eta_0 \to \eta_1}(b)) = G_{\eta_1}(b)$ for all $b\in \D(b_0,\sigma')$. This identity allows us to compute the Fr\'echet derivative of $G_{\eta_1}$ at the point $b_0$ as $(D G_{\eta_1})(b_0) = (D G_{\eta_0})(\omega^{b_0}_{\eta_0 \to \eta_1}(b_0)) (D \omega^{b_0}_{\eta_0 \to \eta_1})(b_0)$. Therefore, we can estimate
\begin{multline}\label{eq:bounding_derivatives}
\big\|(D G_{\eta_1})(b_0) - (D G_{\eta_0})(b_0)\big\|
\leq \big\|(D G_{\eta_0})(\omega^{b_0}_{\eta_0 \to \eta_1}(b_0)) \big((D \omega^{b_0}_{\eta_0 \to \eta_1})(b_0) - \id_B\big)\big\|\\ + \big\|(D G_{\eta_0})(\omega^{b_0}_{\eta_0 \to \eta_1}(b_0)) - (D G_{\eta_0})(b_0)\big\|.
\end{multline}
Keeping in mind that both $b_0$ and, by \eqref{eq:D-inclusion}, $\omega^{b_0}_{\eta_0 \to \eta_1}(b_0)$ belong to $\overline{\bH^+_{(1-\sigma)\gamma}(B)}$, we obtain
$$\big\|(D G_{\eta_0})(\omega^{b_0}_{\eta_0 \to \eta_1}(b_0)) \big((D \omega^{b_0}_{\eta_0 \to \eta_1})(b_0) - \id_B\big)\big\| \leq \frac{1}{(1-\sigma)^2 \gamma^2} \|(D \omega^{b_0}_{\eta_0 \to \eta_1})(b_0) - \id_B\|$$
from the bound \eqref{eq:Cauchy_derivatives_bound-1} provided by Lemma \ref{lem:Cauchy_derivatives_bound}, and from Lemma \ref{lem:Lipschitz_bound_derivative} we deduce that
$$\|(D G_{\eta_0})(\omega^{b_0}_{\eta_0 \to \eta_1}(b_0)) - (D G_{\eta_0})(b_0)\| \leq \frac{c}{(1-\sigma)^3 \gamma^3} \|\omega^{b_0}_{\eta_0 \to \eta_1}(b_0) - b_0\|.$$
By involving the bounds \eqref{eq:local_subordination-2} and \eqref{eq:local_subordination-1}, respectively, the latter estimates lead us to
\begin{multline*}
\big\|(D G_{\eta_0})(\omega^{b_0}_{\eta_0 \to \eta_1}(b_0)) \big((D \omega^{b_0}_{\eta_0 \to \eta_1})(b_0) - \id_B\big)\big\| \leq \frac{1}{\sigma' (1-\sigma') (1-\sigma)^2 \gamma^4} \|\eta_1 - \eta_0\| \qquad\text{and}\\
\|(D G_{\eta_0})(\omega^{b_0}_{\eta_0 \to \eta_1}(b_0)) - (D G_{\eta_0})(b_0)\| \leq \frac{c}{(1-\sigma')(1-\sigma)^3 \gamma^4} \|\eta_1 - \eta_0\|,
\end{multline*}
respectively. By combining these insights with \eqref{eq:bounding_derivatives}, passing to the supremum over $b_0 \in \overline{\bH^+_\gamma(B)}$, and optimizing over $0 < \sigma' < \sigma <1$ with $\sigma_0 = (1-\sigma')(\sigma-\sigma')$, we obtain \eqref{eq:local_comparison_derivative}.
\end{proof}

With the help of Proposition \ref{prop:local_comparison}, we can prove Theorem \ref{thm:continuity}.

\begin{proof}[Proof of Theorem \ref{thm:continuity}]
First, we notice that the assertions in Item \ref{it:continuity-0} and Item \ref{it:continuity-1} concerning the continuity of $G: \P_2(B) \times \bH^+(B) \to \bH^-(B)$ and $DG: \P_2(B) \times \bH^+(B) \to \cL(B)$ immediately follow once the equicontinuity in the respective case is established. The general mechanism behind (albeit not in its most general version but tailored to our situation) is the following: if $f: \P_2(B) \times \bH^+(B) \to F$, for some Banach space $(F,\|\cdot\|)$, is such that
\begin{itemize}
 \item for all $\gamma>0$, the family $f(\cdot,b): \P_2(B) \to F$ with $b \in \overline{\bH^+_\gamma(B)}$ is equicontinuous, and
 \item for every fixed $\eta \in \P_2(B)$, the function $f(\eta,\cdot): \bH^+(B) \to F$ is continuous,
\end{itemize}
then $f$ must be continuous. To see this, we take any sequence $((\eta_n,b_n))_{n=1}^\infty$ in $\P_2(B) \times \bH^+(B)$ converging to $(\eta,b) \in \P_2(B) \times \bH^+(B)$. Note that $(b_n)_{n=1}^\infty$ lies in $\overline{\bH^+_\gamma(B)}$ for some $\gamma>0$; thus,
$$\|f(\eta_n,b_n) - f(\eta,b)\| \leq \sup_{b' \in \overline{\bH^+_\gamma(B)}} \|f(\eta_n,b') - f(\eta,b')\| + \|f(\eta,b_n) - f(\eta,b)\|$$
for all $n\in\N$. With the help of the latter estimate, we easily infer from the assumptions on the separate continuity of $f$ that $f(\eta_n,b_n) \to f(\eta,b)$ as $n \to \infty$. Thus, $f$ is continuous.

The statements on the equicontinuity follow immediately from Proposition \ref{prop:local_comparison}.
\end{proof}

With these preparations done, we may get down to prove Theorem \ref{thm:Burgers}.

\begin{proof}[Proof of Theorem \ref{thm:Burgers}]
First of all, we notice that the functions $G: [0,T] \times \bH^+(B) \to \bH^-(B)$ and $DG: [0,T] \times \bH^+(B) \to \cL(B)$ are continuous thanks to Theorem \ref{thm:continuity} as $\eta: [0,T] \to \P_2(B)$ is continuous. Therefore and because $\dot{\eta}: [0,T] \to \P_2(B)$ is continuous, the continuity of $\dot{G}: [0,T] \times \bH^+(B) \to B$ follows once the Burgers equation \eqref{eq:Burgers} is shown.
Moreover, once \eqref{eq:differentiability} is established, Lemma \ref{lem:holomorphy_criterion} yields that the function $\dot{G}_t: \bH^+(B) \to B$ is holomorphic for every fixed $t\in[0,T]$.
For the proof of Theorem \ref{thm:Burgers} it thus suffices to prove \eqref{eq:differentiability} and \eqref{eq:Burgers}.

We fix $\gamma > 0$ and $t_0\in[0,T]$. Let $\epsilon > 0$ be given. By Lemma \ref{lem:Taylor_bound_Cauchy}, we find $r>0$ such that
\begin{equation}\label{eq:Frechet_condition}
\Big\| G_{t_0}(b) - G_{t_0}(b_0) - (D G_{t_0})(b_0)\big(b-b_0\big) \Big\| \leq \frac{\gamma \epsilon}{3 (1 + \|\dot{\eta}_{t_0}\|)}\, \|b-b_0\|
\end{equation}
whenever $b_0 \in \overline{\bH^+_\gamma(B)}$ and $b\in \B_B(b_0,r) \cap \bH^+(B)$. Next, we choose some $\sigma\in(0,1)$ such that
\begin{equation}\label{eq:sigma-condition}
\frac{\sigma}{(1-\sigma)\gamma^3} \|\dot{\eta}_{t_0}\| < \frac{\epsilon}{3} \qquad\text{and}\qquad \sigma \gamma < r.
\end{equation}
By continuity and differentiability of $\eta$ at $t_0$, there exists $\delta>0$ such that the conditions
\begin{multline}\label{eq:differentiability-condition}
\|\eta_t - \eta_{t_0}\| \leq \sigma \gamma^2 \quad\text{for all $t\in I$} \qquad\text{and}\\
\Big\| \frac{1}{t-t_0}(\eta_t - \eta_{t_0}) - \dot{\eta}_{t_0}\Big\| < \min\Big\{1,\frac{\gamma^3\epsilon}{3}\Big\} \quad\text{for all $t\in I \setminus \{t_0\}$}
\end{multline}
are satisfied with $I := [0,T] \cap (t_0-\delta,t_0+\delta)$. Let us consider an arbitrary $b_0 \in \overline{\bH^+_\gamma(B)}$. Thanks to the first condition in \eqref{eq:differentiability-condition}, we may proceed like in the proof of Proposition \ref{prop:local_subordination} and deduce from Remark \ref{rem:Delta} that $G_t(b_0) \in \U_{b_0,\eta_{t_0}}^\sigma$ for all $t\in I$. Using the bound \eqref{eq:Cauchy_derivatives_bound-1} from Lemma \ref{lem:Cauchy_derivatives_bound} and Proposition \ref{prop:opval_semicircular_approximation} \ref{it:approximate_solution-proximity}, we infer from the first condition in \eqref{eq:sigma-condition} that
\begin{equation}\label{eq:term-1}
\|(D G_{t_0})(b_0)\| \|\dot{\eta}_{t_0}\| \|G_t(b_0) - G_{t_0}(b_0)\| < \frac{\epsilon}{3} \qquad\text{for all $t\in I$}.
\end{equation}
By involving Proposition \ref{prop:local_subordination}, we see that $b_{0,t} := \omega^{b_0}_{\eta_{t_0} \to \eta_t}(b_0)$ lies in $\bH^+(B)$ and satisfies $G_t(b_0) = G_{t_0}(b_{0,t})$ for all $t \in I$. From Remark \ref{rem:Delta} and the first condition in \eqref{eq:differentiability-condition}, we get 
\begin{equation}\label{eq:wandering_b}
b_0 - b_{0,t} = \Delta_{b_0,\eta_{t_0}}(G_t(b_0)) = (\eta_t-\eta_{t_0})(G_t(b_0))
\end{equation}
and in turn $\|b_{0,t} - b_0\| \leq \sigma \gamma$ for all $t\in I$; by the second condition imposed in \eqref{eq:sigma-condition}, this ensures that $b_{0,t} \in \B_B(b_0,r) \cap \bH^+(B)$ for all $t\in I$. Thus, we can apply \eqref{eq:Frechet_condition}, which gives us
\begin{equation}\label{eq:term-2}
\frac{1}{|t-t_0|} \Big\| G_t(b_0) - G_{t_0}(b_0) - (D G_{t_0})(b_0)\big(b_{0,t}-b_0\big) \Big\| \leq \frac{\epsilon}{3} \qquad\text{for all $t\in I \setminus \{t_0\}$},
\end{equation}
where we have used that \eqref{eq:wandering_b} and the second condition in \eqref{eq:differentiability-condition} imply
$$\frac{1}{|t-t_0|} \|b_{0,t}-b_0\| \leq \frac{1}{\gamma} \bigg(\Big\| \frac{1}{t-t_0}(\eta_t - \eta_{t_0}) - \dot{\eta}_{t_0}\Big\| + \|\dot{\eta}_{t_0}\|\bigg) \leq \frac{1+\|\dot{\eta}_{t_0}\|}{\gamma}.$$
From \eqref{eq:wandering_b} and the second condition in \eqref{eq:differentiability-condition}, we further obtain by using \eqref{eq:Cauchy_derivatives_bound-1} from Lemma \ref{lem:Cauchy_derivatives_bound} as well as one more time the bound $\|G_t(b_0)\| \leq \frac{1}{\gamma}$ deduced from \eqref{eq:Cauchy_bound-1} that
\begin{equation}\label{eq:term-3}
\|(D G_{t_0})(b_0)\| \Big\|\frac{1}{t-t_0}(b_0-b_{0,t}) - \dot{\eta}_{t_0}(G_t(b_0))\Big\| < \frac{\epsilon}{3} \qquad\text{for all $t\in I \setminus \{t_0\}$}.
\end{equation}
Putting together the bounds \eqref{eq:term-1}, \eqref{eq:term-2}, and \eqref{eq:term-3} which we just established, we obtain
\begin{multline*}
\Big\|\frac{1}{t-t_0}\big(G_t(b_0) - G_{t_0}(b_0)\big) + (D G_{t_0})(b_0)\big(\dot{\eta}_{t_0}(G_{t_0}(b_0))\big) \Big\|\\
\leq \frac{1}{|t-t_0|} \Big\| G_t(b_0) - G_{t_0}(b_0) - (D G_{t_0})(b_0)\big(b_{0,t} - b_0\big) \Big\|\\
+ \|(D G_{t_0})(b_0)\| \Big\|\frac{1}{t-t_0}(b_0 - b_{0,t}) - \dot{\eta}_{t_0}(G_t(b_0))\Big\|
+ \|(D G_{t_0})(b_0)\| \|\dot{\eta}_{t_0}\| \|G_t(b_0) - G_{t_0}(b_0)\|
 < \epsilon
\end{multline*}
for all $t\in I \setminus \{t_0\}$. Because $b_0\in \overline{\bH^+_\gamma(B)}$ was arbitrary, it follows that
$$\sup_{b_0\in \overline{\bH^+_\gamma(B)}} \Big\|\frac{1}{t-t_0}\big(G_t(b_0) - G_{t_0}(b_0)\big) + (D G_{t_0})(b_0)\big(\dot{\eta}_{t_0}(G_{t_0}(b_0))\big) \Big\| \leq \epsilon.$$
Thus, in summary, we see that
$$\lim_{t\to t_0} \sup_{b_0\in \overline{\bH^+_\gamma(B)}} \Big\|\frac{1}{t-t_0}\big(G_t(b_0) - G_{t_0}(b_0)\big) + (D G_{t_0})(b_0)\big(\dot{\eta}_{t_0}(G_{t_0}(b_0))\big) \Big\| = 0.$$
This shows that the family $G_{\boldsymbol{\cdot}}(b_0): [0,T] \to \bH^-(B)$ for $b_0\in \overline{\bH^+_\gamma(B)}$ is equidifferentiable at $t_0$ with $\dot{G}_{t_0}(b_0) = - (D G_{t_0})(b_0)(\dot{\eta}_{t_0}(G_{t_0}(b_0)))$, which establishes at once \eqref{eq:differentiability} and \eqref{eq:Burgers}.
\end{proof}

\section{H\"older continuity of the density of states}\label{sec:Levy_Hoelder}

In this section, we give the proof of Theorem \ref{thm:Levy_Hoelder}. For each of the two H\"older bounds stated therein, the proof follows the same strategy based on the next lemma.

\begin{lemma}\label{lem:Levy_strategy}
Let $(\rho_t)_{t\in[0,T]}$ be a family of data pairs in $B_\sa \times \P(B)$. Suppose that the function $\G: [0,T] \times \C^+ \to \C^-, (t,z) \mapsto \G_t(z)$ defined by $\G_t(z) := \G_{\rho_t}(z)$ for $(t,z) \in [0,T] \times \C^+$ satisfies the following conditions:
\begin{enumerate}
 \item\label{it:Levy_strategy-continuity} The function $\G: [0,T] \times \C^+ \to \C^-$ is continuous with respect to the product topology on $[0,T] \times \C^+$.
 \item\label{it:Levy_strategy-differentiability} For each $z\in\C^+$, the function $\G_{\boldsymbol{\cdot}}(z): [0,T] \to \C^-, t \mapsto \G_t(z)$ is continuously differentiable; we denote its derivative by $\dot{\G}_{\boldsymbol{\cdot}}(z): [0,T] \to \C^-$.
 \item\label{it:Levy_strategy-derivative_bound} There are $k\in\N$ and $\theta\geq 0$ such that 
$$\big|\dot{\G}_t(z)\big| \leq - \frac{\Im(\G_t(z))}{\Im(z)^k}\, \theta \qquad\text{for all $(t,z) \in [0,T] \times \C^+$}.$$
\end{enumerate}
Then, for every $\epsilon>0$, it holds true that
\begin{equation}\label{eq:Levy_strategy_integral-bound}
\frac{1}{\pi} \int^\infty_{-\infty} \big| \G_{\rho_T}(s+i\epsilon) - \G_{\rho_0}(s+i\epsilon) \big|\, ds \leq \frac{T \theta}{\epsilon^k},
\end{equation}
and with the absolute constant $c_k := (2k+1)(\frac{1}{k^2 \pi})^{\frac{k}{2k+1}}$, we have that
\begin{equation}\label{eq:Levy_strategy_Levy-bound}
L(\mu_{\rho_T},\mu_{\rho_0}) \leq c_k T^{\frac{1}{2k+1}} \theta^{\frac{1}{2k+1}}.
\end{equation}
\end{lemma}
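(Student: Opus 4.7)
The plan is to reduce the claim to a single one-parameter family estimate coming from hypothesis (iii), integrate in $s$ using Fubini and a standard Poisson kernel identity, and then feed the resulting integral bound into Theorem~\ref{thm:Levy_distance} and optimize in $\epsilon>0$.

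First I would use hypothesis (ii) together with the fundamental theorem of calculus to write, for every fixed $z\in\C^+$,
$$\G_{\rho_T}(z) - \G_{\rho_0}(z) = \int_0^T \dot\G_t(z)\, dt,$$
so that by hypothesis (iii) applied at $z = s+i\epsilon$,
$$|\G_{\rho_T}(s+i\epsilon) - \G_{\rho_0}(s+i\epsilon)| \leq \frac{\theta}{\epsilon^k}\int_0^T \big(-\Im(\G_t(s+i\epsilon))\big)\, dt.$$
Crucially the integrand on the right is nonnegative, which legitimates the subsequent exchange of integrations. Integrating in $s\in\R$, applying Fubini (joint measurability being provided by the continuity in hypotheses (i) and (ii)), and invoking the Poisson integral identity
$$\int_{-\infty}^\infty \big(-\Im(\G_t(s+i\epsilon))\big)\, ds = \int_{-\infty}^\infty \int_\R \frac{\epsilon}{(s-t')^2+\epsilon^2}\, d\mu_{\rho_t}(t')\, ds = \pi$$
valid for each $t\in[0,T]$, one obtains the first bound \eqref{eq:Levy_strategy_integral-bound} after dividing by $\pi$.

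For the L\'evy distance estimate I would plug this into Theorem~\ref{thm:Levy_distance}: since $|\Im(\G_{\mu_{\rho_T}}(\cdot)) - \Im(\G_{\mu_{\rho_0}}(\cdot))| \leq |\G_{\mu_{\rho_T}}(\cdot) - \G_{\mu_{\rho_0}}(\cdot)|$ pointwise, the bound \eqref{eq:Levy_bound} combined with \eqref{eq:Levy_strategy_integral-bound} yields
$$L(\mu_{\rho_T},\mu_{\rho_0}) \leq 2\sqrt{\frac{\epsilon}{\pi}} + \frac{T\theta}{\epsilon^k} \qquad\text{for every $\epsilon>0$.}$$
It remains to minimize this elementary expression in $\epsilon$. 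Differentiating gives the critical point determined by $\epsilon^{k+1/2} = k\sqrt{\pi}\, T\theta$; substituting, the two summands collapse to $\frac{2k+1}{k\sqrt{\pi}}\epsilon^{1/2}$, and tidying up the resulting powers of $k$ and $\pi$ reproduces the advertised constant $c_k=(2k+1)(\tfrac{1}{k^2\pi})^{k/(2k+1)}$ together with the exponent $\tfrac{1}{2k+1}$ on $T\theta$.

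I do not expect any real obstacle in this proof; the only step demanding care is the final algebraic bookkeeping to match the precise constant $c_k$ and exponent $\tfrac{1}{2k+1}$ in the statement. What makes the whole argument work so cleanly is the positivity of $-\Im(\G_t(s+i\epsilon))$, which simultaneously justifies Fubini and produces the universal constant $\pi$ via the Poisson kernel computation, so that the entire $t$-integration collapses to the linear factor $T$.
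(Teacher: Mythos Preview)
Your proposal is correct and follows essentially the same route as the paper: fundamental theorem of calculus, the pointwise bound from (iii), Fubini--Tonelli justified by continuity, the fact that $-\frac{1}{\pi}\Im(\G_t(s+i\epsilon))\,ds$ has total mass $1$ (which you phrase via the Poisson kernel), and finally Theorem~\ref{thm:Levy_distance} with optimization in $\epsilon$. The paper leaves the algebra of the minimization implicit, whereas you spell it out; otherwise the arguments coincide.
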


\begin{proof}
Take any $\epsilon>0$. Thanks to condition \ref{it:Levy_strategy-differentiability}, the fundamental theorem of calculus allows us to write $\G_{\rho_T}(s+i\epsilon) - \G_{\rho_0}(s+i\epsilon) = \int^T_0 \dot{\G}_t(s+i\epsilon)\, dt$ for every $s\in \R$. In combination with the bound required in condition \ref{it:Levy_strategy-derivative_bound}, this yields that
$$\big| \G_{\rho_T}(s+i\epsilon) - \G_{\rho_0}(s+i\epsilon) \big|
\leq - \frac{\theta}{\epsilon^k} \int^T_0 \Im(\G_t(s + i \epsilon))\, dt$$
for all $s\in\R$. By integrating the latter inequality over $s\in\R$ and using the Fubini-Tonelli theorem, which can be applied thanks to condition \ref{it:Levy_strategy-continuity}, we conclude that
$$\frac{1}{\pi} \int^\infty_{-\infty} \big| \G_{\rho_T}(s+i\epsilon) - \G_{\rho_0}(s+i\epsilon) \big|\, ds
\leq \frac{\theta}{\epsilon^k} \int^T_0 \int^\infty_{-\infty} \Big(-\frac{1}{\pi} \Im(\G_t(s + i \epsilon)) \Big)\, ds\, dt
= \frac{T \theta}{\epsilon^k}.$$
Notice that the last step relies on the fact that $-\frac{1}{\pi} \Im(\G_t(s + i \epsilon))\, ds$ is a probability measure on $\R$ for each $t\in[0,T]$. This verifies the bound asserted in \eqref{eq:Levy_strategy_integral-bound}. 
Finally, we involve the estimate for the L\'evy distance given in Theorem \ref{thm:Levy_distance}. The already established bound \eqref{eq:Levy_strategy_integral-bound} gives us that $L(\mu_{\rho_T},\mu_{\rho_0}) \leq 2\sqrt{\frac{\epsilon}{\pi}} + \frac{T \theta}{\epsilon^k}$ for all $\epsilon>0$. Minimizing the right hand side over $\epsilon \in (0,\infty)$ yields the asserted bound \eqref{eq:Levy_strategy_Levy-bound}.
\end{proof}

With the help of Lemma \ref{lem:Levy_strategy}, we can finally prove Theorem \ref{thm:Levy_Hoelder}. Notice that the uniform continuity of $\mu: (B_\sa \times \P_2(B),d) \to (\Prob(\R),L)$ with respect to any metric $d$ on $B_\sa \times \P_2(B)$ which yields a metrization of the product topology is an immediate consequence of the bounds stated in the items \ref{it:Levy_Hoelder-1} and \ref{it:Levy_Hoelder-2} of that theorem. To see this, we only need to notice that, for any two data pairs $\rho_0 = (b_{0,0},\eta_0)$ and $\rho_1 = (b_{0,1},\eta_1)$ in $B_\sa \times \P_2(B)$, we have that
$$L(\mu_{\rho_1},\mu_{\rho_0})
\leq L(\mu_{(b_{0,1},\eta_1)},\mu_{(b_{0,0},\eta_1)}) + L(\mu_{(b_{0,0},\eta_1)},\mu_{(b_{0,0},\eta_0)})
\leq c_1\, \|b_{0,1}-b_{0,0}\|^{1/3} + c_2\, \|\eta_1-\eta_0\|^{1/5}.$$
Thus, it suffices to prove \ref{it:Levy_Hoelder-1} and \ref{it:Levy_Hoelder-2} in Theorem \ref{thm:Levy_Hoelder}; we treat these two parts separately.

\begin{proof}[Proof of Theorem \ref{thm:Levy_Hoelder} \ref{it:Levy_Hoelder-1}]
For fixed $b_0 \in B_\sa$ and $\eta_0,\eta_1 \in \P_2(B)$, we consider the $C^1$-path
$$\eta:\ [0,1] \to \P_2(B),\quad t \mapsto \eta_t := (1-t) \eta_0 + t \eta_1$$
and the family of data pairs $(\rho_t)_{t\in[0,1]}$ given by $\rho_t := (b_0,\eta_t)$ for $t\in [0,1]$. In this case, the function $\G: [0,1] \times \C^+ \to \C^-, (t,z) \mapsto \G_t(z)$ defined by $\G_t(z) := \G_{\rho_t}(z)$ for $(t,z) \in [0,1] \times \C^+$ is related with the function $G: [0,1] \times \bH^+(B) \to \bH^-(B), (t,b) \mapsto G_t(b)$ defined by $G_t(b) := G_{\eta_t}(b)$ for all $(t,z) \in [0,1] \times \bH^+(B)$ via $\G_t(z) = \phi(G_t(z\1-b_0))$ for all $(t,z) \in [0,1] \times \C^+$. 

From Theorem \ref{thm:continuity}, we derive that $G: [0,1] \times \bH^+(B) \to \bH^-(B)$ is continuous, and in Theorem \ref{thm:Burgers}, we have seen that $G_{\boldsymbol{\cdot}}(b): [0,1] \to \bH^-(B)$ is continuously differentiable for every fixed $b\in \bH^+(B)$; we infer that $\G$ satisfies the conditions formulated in Item \ref{it:Levy_strategy-continuity} and Item \ref{it:Levy_strategy-differentiability} of Lemma \ref{lem:Levy_strategy}. To verify also the condition formulated in Item \ref{it:Levy_strategy-derivative_bound}, we proceed as follows. First, we deduce from Theorem \ref{thm:Burgers} that $G$ solves the partial differential equation
$$\dot{G}_t(b) = - (D G_t)(b)\big((\eta_1-\eta_0)(G_t(b))\big) \qquad\text{for $(t,b) \in [0,1] \times \bH^+(B)$}.$$
By applying $\phi$ to both sides, we derive with the bound \eqref{eq:Cauchy_derivatives_bound-2} provided by Lemma \ref{lem:Cauchy_derivatives_bound} that
\begin{multline*}
\big|\phi(\dot{G}_t(b))\big| \leq - \Im(\phi(G_t(b))) \|\Im(b)^{-1}\| \|(\eta_1-\eta_0)\big( G_t(b) \big)\|\\
                             \leq - \Im(\phi(G_t(b))) \|\Im(b)^{-1}\|^2 \|\eta_1-\eta_0\|.
\end{multline*}
When applied for $b = z\1 - b_0$, the latter says that for $\G$ the condition \ref{it:Levy_strategy-derivative_bound} is satisfied with
\begin{equation}\label{eq:Cauchy_derivatives_bound-Levy_Hoelder-1}
\big|\dot{\G}_t(z)\big| \leq - \frac{\Im(\G_t(z))}{\Im(z)^2}\, \|\eta_1-\eta_0\| \qquad\text{for $(t,z) \in [0,1] \times \C^+$}.
\end{equation}
Thus, \eqref{eq:Levy_strategy_Levy-bound} in Lemma \ref{lem:Levy_strategy} yields the asserted bound $L(\mu_{\rho_1},\mu_{\rho_0}) \leq c_2 \|\eta_1-\eta_0\|^{1/5}$.
\end{proof}

\begin{proof}[Proof of Theorem \ref{thm:Levy_Hoelder} \ref{it:Levy_Hoelder-2}]
Suppose that $b_{0,0},b_{0,1}\in B_\sa$ and $\eta \in \P_2(B)$ are given. We set
$$b_0:\ [0,1] \to B_\sa,\quad t \mapsto b_{0,t} := (1-t) b_{0,0} + t b_{0,1}$$
and consider the family of data pairs $(\rho_t)_{t\in[0,1]}$ given by $\rho_t := (b_{0,t},\eta)$ for $t\in [0,1]$.
We define $G: [0,1] \times \bH^+(B) \to \bH^-(B), (t,b) \mapsto G_t(b)$ by $G_t(b) := G_\eta(b-b_{0,t})$ for all $(t,b)\in [0,1] \times \bH^+(B)$. The function $\G: [0,1] \times \C^+ \to \C^-, (t,z) \mapsto \G_t(z)$ defined by $\G_t(z) := \G_{\rho_t}(z)$ for $(t,z) \in [0,1] \times \C^+$ is then related with $G$ via $\G_t(z) = \phi(G_t(z\1))$ for all $(t,z) \in [0,1] \times \C^+$. 

Clearly, $G: [0,1] \times \bH^+(B) \to \bH^-(B)$ is continuous and $G_{\boldsymbol{\cdot}}(b): [0,1] \to \bH^-(B)$ is continuously differentiable with derivative given by $\dot{G}_t(b) = -(D G_\eta)(b-b_{0,t})(b_{0,1}-b_{0,0})$ for every fixed $b \in \bH^+(B)$; hence, $\G$ satisfies the conditions formulated in Item \ref{it:Levy_strategy-continuity} and Item \ref{it:Levy_strategy-differentiability} of Lemma \ref{lem:Levy_strategy}. To verify condition \ref{it:Levy_strategy-derivative_bound}, we use the bound \eqref{eq:Cauchy_derivatives_bound-2} provided by Lemma \ref{lem:Cauchy_derivatives_bound},
$$\big|\phi(\dot{G}_t(b))\big| \leq - \Im(\phi(G_t(b))) \|\Im(b)^{-1}\| \|b_{0,1}-b_{0,0}\|,$$
which yields, when applied for $b=z\1$, that
\begin{equation}\label{eq:Cauchy_derivatives_bound-Levy_Hoelder-2}
\big|\dot{\G}_t(z)\big| \leq - \frac{\Im(\G_t(z))}{\Im(z)}\, \|b_{0,1}-b_{0,0}\| \qquad\text{for $(t,z) \in [0,1] \times \C^+$}.
\end{equation}
Thus, \eqref{eq:Levy_strategy_Levy-bound} in Lemma \ref{lem:Levy_strategy} yields the asserted bound $L(\mu_{\rho_1},\mu_{\rho_0}) \leq c_1 \|b_{0,1}-b_{0,0}\|^{1/3}$.
\end{proof}

We conclude by the following consequence of the proof of Theorem \ref{thm:Levy_Hoelder}.

\begin{corollary}\label{cor:integral_bounds}
In the situation of Theorem \ref{thm:Levy_Hoelder}, the following bounds hold true:
\begin{enumerate}
 \item\label{it:integral_bounds-1} For each choice of $\eta \in \P_2(B)$, we have for all $b_{0,0},b_{0,1} \in B_\sa$ and $\epsilon>0$ that
$$\frac{1}{\pi} \int^\infty_{-\infty} \big| \G_{\mu_{(b_{0,1},\eta)}}(s+i\epsilon) - \G_{\mu_{(b_{0,0},\eta)}}(s+i\epsilon) \big|\, ds \leq \frac{1}{\epsilon}\, \|b_{0,1} - b_{0,0}\|.$$
 \item\label{it:integral_bounds-2} For each choice of $b_0 \in B_\sa$, we have for all $\eta_0,\eta_1\in\P_2(B)$ and $\epsilon>0$ that
$$\frac{1}{\pi} \int^\infty_{-\infty} \big|\G_{\mu_{(b_0,\eta_1)}}(s+i\epsilon) - \G_{\mu_{(b_0,\eta_0)}}(s+i\epsilon)\big|\, ds \leq \frac{1}{\epsilon^2}\, \|\eta_1 - \eta_0\|.$$
\end{enumerate}
\end{corollary}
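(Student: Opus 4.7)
My plan is to observe that this corollary is essentially a byproduct of the proof of Theorem \ref{thm:Levy_Hoelder}: the integral bounds \eqref{it:integral_bounds-1} and \eqref{it:integral_bounds-2} are precisely the intermediate estimate \eqref{eq:Levy_strategy_integral-bound} appearing in Lemma \ref{lem:Levy_strategy}, applied to the same two $C^1$-interpolations of data pairs which were used to derive the two H\"older bounds of Theorem \ref{thm:Levy_Hoelder}.

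More concretely, for part \eqref{it:integral_bounds-1} I would set $\rho_t := (b_{0,t},\eta)$ with $b_{0,t} := (1-t) b_{0,0} + t b_{0,1}$ for $t\in[0,1]$, exactly as in the proof of Theorem \ref{thm:Levy_Hoelder} \ref{it:Levy_Hoelder-2}. There it was verified that the associated function $\G_t(z) = \phi(G_\eta(z\1 - b_{0,t}))$ meets all three hypotheses of Lemma \ref{lem:Levy_strategy}, with the derivative estimate \eqref{eq:Cauchy_derivatives_bound-Levy_Hoelder-2} providing condition \ref{it:Levy_strategy-derivative_bound} with parameters $k=1$ and $\theta = \|b_{0,1}-b_{0,0}\|$. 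Plugging $T=1$, $k=1$, and this value of $\theta$ into \eqref{eq:Levy_strategy_integral-bound} yields exactly the bound $\frac{1}{\epsilon}\|b_{0,1}-b_{0,0}\|$ claimed in \eqref{it:integral_bounds-1}.

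For part \eqref{it:integral_bounds-2} I would proceed analogously, using now $\rho_t := (b_0,\eta_t)$ with $\eta_t := (1-t)\eta_0 + t\eta_1$ for $t\in[0,1]$, as in the proof of Theorem \ref{thm:Levy_Hoelder} \ref{it:Levy_Hoelder-1}. The conditions \ref{it:Levy_strategy-continuity} and \ref{it:Levy_strategy-differentiability} of Lemma \ref{lem:Levy_strategy} were again verified there by invoking Theorems \ref{thm:continuity} and \ref{thm:Burgers}, and the bound \eqref{eq:Cauchy_derivatives_bound-Levy_Hoelder-1} provides condition \ref{it:Levy_strategy-derivative_bound} with $k=2$ and $\theta = \|\eta_1-\eta_0\|$. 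Substituting into \eqref{eq:Levy_strategy_integral-bound} with $T=1$ delivers the bound $\frac{1}{\epsilon^2}\|\eta_1-\eta_0\|$.

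There is no real obstacle here: all the analytic work has already been carried out for Theorem \ref{thm:Levy_Hoelder}, and it suffices to stop at the intermediate inequality \eqref{eq:Levy_strategy_integral-bound} rather than proceed to the optimization in $\epsilon$ that produced the L\'evy bound. The only thing worth flagging is that the proof of Theorem \ref{thm:Levy_Hoelder} already read off exactly the constants $k$ and $\theta$ needed here, so the corollary truly is a one-line consequence of that argument.
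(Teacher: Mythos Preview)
Your proposal is correct and matches the paper's own proof essentially verbatim: the paper also just invokes \eqref{eq:Levy_strategy_integral-bound} from Lemma \ref{lem:Levy_strategy} together with the derivative bounds \eqref{eq:Cauchy_derivatives_bound-Levy_Hoelder-1} and \eqref{eq:Cauchy_derivatives_bound-Levy_Hoelder-2} already established in the proof of Theorem \ref{thm:Levy_Hoelder}, reading off $k=1$, $\theta=\|b_{0,1}-b_{0,0}\|$ and $k=2$, $\theta=\|\eta_1-\eta_0\|$ respectively.
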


\begin{proof}
Using \eqref{eq:Levy_strategy_integral-bound} from Lemma \ref{lem:Levy_strategy}, we obtain the bounds asserted in the corollary from the bounds \eqref{eq:Cauchy_derivatives_bound-Levy_Hoelder-1} and \eqref{eq:Cauchy_derivatives_bound-Levy_Hoelder-2} which we derived in the proofs of \ref{it:Levy_Hoelder-1} and \ref{it:Levy_Hoelder-2} in Theorem \ref{thm:Levy_Hoelder}.
\end{proof}

\begin{appendix}

\section{Free noncommutative function theory on the second level}\label{sec:FreeNCFunctionTheory}

Free noncommutative function theory has its origins in the work of Taylor \cite{Taylor1972,Taylor1973}, whose ideas were taken up subsequently by numerous authors for various applications; we mention \cite{Voi2004,Voi2008,Voi2010} for their relevance on free probability theory and since Proposition \ref{prop:opval_semicircular_Cauchy_matrix} fits nicely into that context. A unified and very general approach was presented by Kaliuzhnyi-Verbovetskyi and Vinnikov in \cite{KVV2014}, building on the notion of noncommutative sets and functions. With an eye towards applications in the concrete situation of this paper, we introduce here the terminology of noncommutative sets and functions in some truncated version. For the sake of simplicity, we restrict ourselves to the case where those are built on $C^\ast$-algebras.

Let $A$ be a unital $C^\ast$-algebra. For any $n\in \N$, we refer to $M_{\leq n}(A) := \coprod_{k=1}^n M_k(A)$ as the \emph{noncommutative space over $A$ of height $n$}. For a subset $\Omega \subseteq M_{\leq n}(A)$, we call $\Omega_k := M_k(A) \cap \Omega$ for $1\leq k \leq n$ the \emph{$k$-th level of $\Omega$}. We say that $\Omega$ is a \emph{noncommutative set over $A$ of height $n$} if $\Omega$ is closed under direct sums of appropriate size, i.e., we require that whenever $1 \leq k,l \leq n$ are such that $k+l \leq n$, then 
$$X \oplus Y :=  \begin{bmatrix} X & 0 \\ 0 & Y \end{bmatrix} \in \Omega_{k+l}$$
for all $X \in \Omega_k$ and $Y \in \Omega_l$.
We say that $\Omega$ is \emph{open} if each level $\Omega_k$ is open in the operator-norm topology of the respective $C^\ast$-algebra $M_k(A)$.

Now, consider two unital $C^\ast$-algebras $A$ and $B$ and let $\Omega \subseteq M_{\leq n}(A)$ be a noncommutative set of height $n$. Consider a function $f^{(\leq n)}: \Omega \to M_{\leq n}(B)$. We shall denote by $f^{(k)}: \Omega_k \to M_{\leq n}(B)$ the restriction of $f^{(\leq n)}$ to the $k$-th level $\Omega_k$ of $\Omega$. We call $f^{(\leq n)}$ a \emph{noncommutative function of height $n$} if $f^{(k)}(\Omega_k) \subseteq M_k(B)$ for $k=1,\dots,n$ and the following two conditions are satisfied:
\begin{itemize}
 \item \emph{$f^{(\leq n)}$ respects direct sums}, i.e., whenever $1 \leq k,l \leq n$ are such that $k+l \leq n$, then $f^{(k+l)}(X \oplus Y) = f^{(k)}(X) \oplus f^{(l)}(Y)$ holds true for all $X \in \Omega_k$ and $Y \in \Omega_l$.
 \item \emph{$f^{(\leq n)}$ respects similarities}, i.e., if $X\in \Omega_k$ with $1\leq k \leq n$ and an invertible matrix $S\in M_k(\C)$ are given such that $S X S^{-1} \in \Omega_k$, then $f^{(k)}(S X S^{-1}) = S f^{(k)}(X) S^{-1}$.
\end{itemize}
We say that $f$ is \emph{continuous} if $f^{(k)}: \Omega_k \to M_k(B)$ is continuous for $k=1,\dots,n$ in the respective operator-norm topologies. 

A fundamental fact in free noncommutative function theory is that noncommutative functions, due to their algebraic structure, show strong analytic properties under rather weak assumptions; see \cite[Theorem 7.4]{KVV2014}. To some extent, this remains true in the truncated setting. The following theorem, which is a variant of \cite[Proposition 2.5]{HKMcC2011} in the spirit of \cite[Lemma 3.1]{AMcC2015}, makes this claim precise. In essence, it says that continuous functions between subsets of $C^\ast$-algebras are automatically (Fr\'echet) holomorphic if they admit an extension to a noncommutative function of height $2$.

\begin{theorem}\label{theo:matrix-derivative}
Let $A$ and $B$ be unital $C^\ast$-algebras and let $\emptyset \neq \Omega_1 \subseteq A$ be an open subset. Consider a continuous function $f^{(1)}: \Omega_1 \to B$ with the following property: there exists an open noncommutative set $\Omega$ over $A$ of height $2$ and a continuous noncommutative function $f^{(\leq 2)}: \Omega \to M_{\leq 2}(B)$ of height $2$ which extends $f^{(1)}: \Omega_1 \to B$ in the sense that $\Omega_1$ is the first level set of $\Omega$ and $f^{(1)}$ is the restriction of $f^{(\leq 2)}$ to $\Omega_1$.
Then $f^{(1)}$ is holomorphic and, for all $a \in \Omega_1$ and $h \in A$ satisfying the condition
\begin{equation}\label{eq:matrix-condition}
\begin{bmatrix} a & h\\ 0 & a\end{bmatrix} \in \Omega_2,
\end{equation}
it holds true that
\begin{equation}\label{eq:matrix-derivative}
f^{(2)}\bigg(\begin{bmatrix} a & h\\ 0 & a \end{bmatrix} \bigg) = \begin{bmatrix} f^{(1)}(a) & (D f^{(1)})(a) h \\ 0 & f^{(1)}(a) \end{bmatrix}.
\end{equation}
\end{theorem}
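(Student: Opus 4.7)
The plan is to use scalar $2\times 2$ similarities applied to direct sums inside $\Omega_2$, together with the continuity of $f^{(\leq 2)}$, first to obtain the block-upper-triangular form of $f^{(2)}$ on matrices of the type $\begin{bmatrix} a & h \\ 0 & a \end{bmatrix}$, and then to read off the G\^ateaux derivative of $f^{(1)}$ from this structure. Throughout, the openness of $\Omega$ at the points $a \oplus a \in \Omega_2$ (which belong to $\Omega_2$ for every $a \in \Omega_1$ by the direct-sum axiom) is what allows us to repeatedly certify that the conjugated matrices lie in $\Omega_2$.

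For the first step, I would fix $a \in \Omega_1$ and $h \in A$ with $X := \begin{bmatrix} a & h \\ 0 & a \end{bmatrix} \in \Omega_2$ and exploit the similarity $S_t := \mathrm{diag}(1, t)$. A direct computation gives $S_t X S_t^{-1} = \begin{bmatrix} a & t^{-1} h \\ 0 & a \end{bmatrix}$, which lies in $\Omega_2$ for $|t|$ large by openness since it converges to $a \oplus a$. Writing $f^{(2)}(X) = \bigl[\begin{smallmatrix} g_{11} & g_{12} \\ g_{21} & g_{22} \end{smallmatrix}\bigr]$, the similarity property yields $f^{(2)}(S_t X S_t^{-1}) = \bigl[\begin{smallmatrix} g_{11} & t^{-1} g_{12} \\ t g_{21} & g_{22} \end{smallmatrix}\bigr]$, whose limit as $|t|\to\infty$ must match $f^{(2)}(a\oplus a) = f^{(1)}(a)\oplus f^{(1)}(a)$ by continuity and the direct-sum axiom. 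This forces $g_{21} = 0$ and $g_{11} = g_{22} = f^{(1)}(a)$, so $f^{(2)}(X)$ has the claimed block-upper-triangular shape; denote the remaining entry by $g(a,h)$. The same similarity also yields the homogeneity $g(a,\lambda h) = \lambda g(a,h)$ whenever both $X$ and $\begin{bmatrix} a & \lambda h \\ 0 & a \end{bmatrix}$ are in $\Omega_2$.

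The second and central step is an off-diagonal similarity. Given $a \in \Omega_1$ and $k \in A$, I would choose $\lambda \in \C \setminus \{0\}$ small enough that $\begin{bmatrix} a & -\lambda k \\ 0 & a \end{bmatrix} \in \Omega_2$, which is possible by openness around $a \oplus a$. For $z \in \C \setminus \{0\}$ with $|z|$ small, the scalar matrix $S := \begin{bmatrix} 1 & \lambda/z \\ 0 & 1 \end{bmatrix}$ transforms $(a+zk)\oplus a \in \Omega_2$ into $\begin{bmatrix} a + zk & -\lambda k \\ 0 & a \end{bmatrix} \in \Omega_2$, and combining the similarity and direct-sum rules gives
\begin{equation*}
f^{(2)}\begin{bmatrix} a + zk & -\lambda k \\ 0 & a \end{bmatrix} = \begin{bmatrix} f^{(1)}(a + zk) & -\tfrac{\lambda}{z}\bigl[f^{(1)}(a+zk) - f^{(1)}(a)\bigr] \\ 0 & f^{(1)}(a) \end{bmatrix}.
\end{equation*}
Letting $z \to 0$ and invoking the continuity of $f^{(2)}$ at $\begin{bmatrix} a & -\lambda k \\ 0 & a \end{bmatrix}$ forces the $(1,2)$-entry on the right to converge, which shows that $\delta f^{(1)}(a;k) := \lim_{z\to 0} z^{-1}[f^{(1)}(a+zk) - f^{(1)}(a)]$ exists and equals $-\lambda^{-1} g(a,-\lambda k)$.

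Since this argument applies at every point of $\Omega_1$, the functions $z \mapsto \phi(f^{(1)}(a+zk))$ are complex-differentiable on $U(a;k)$ for every bounded linear functional $\phi$ on $B$, and hence holomorphic; thus $f^{(1)}$ is G\^ateaux holomorphic by the characterization recalled in Subsection \ref{subsec:holomorphy}. Continuity of $f^{(1)}$ provides the local boundedness needed for the Zorn/Hille-Phillips criterion, so $f^{(1)}$ is Fr\'echet holomorphic and $(Df^{(1)})(a) k = \delta f^{(1)}(a;k)$. Finally, to obtain \eqref{eq:matrix-derivative} for every $h$ satisfying \eqref{eq:matrix-condition}, I would pick $\lambda$ small with $\begin{bmatrix} a & -\lambda h \\ 0 & a \end{bmatrix} \in \Omega_2$ (so that $g(a,-\lambda h)$ is defined), apply the formula from the second step with $k = h$ to get $(Df^{(1)})(a) h = -\lambda^{-1} g(a,-\lambda h)$, and then use the homogeneity from the first step (both matrices lie in $\Omega_2$) to identify this with $g(a,h)$, which is the $(1,2)$-entry of $f^{(2)}(X)$. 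The main obstacle is bookkeeping: every use of the similarity rule demands that both the source and the target matrices lie in $\Omega_2$, and this must be engineered by tying the similarity parameter to the perturbation scale while exploiting the openness of $\Omega$ around the base points $a \oplus a$.
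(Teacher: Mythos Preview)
Your proof is correct and rests on the same central device as the paper's: conjugating a block-diagonal matrix by a unipotent upper-triangular scalar matrix so that the $(1,2)$-entry of $f^{(2)}$ becomes a difference quotient, and then passing to the limit using continuity of $f^{(2)}$.

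The only noteworthy difference is organizational. The paper works directly with the given $h$: it perturbs $\begin{bmatrix} a & h \\ 0 & a \end{bmatrix}$ to $\begin{bmatrix} a & h \\ 0 & a+zh \end{bmatrix}$ (which lies in $\Omega_2$ for small $|z|$ by openness around the original point), conjugates this to $a \oplus (a+zh)$ via $\begin{bmatrix} 1 & 1/z \\ 0 & 1 \end{bmatrix}$, and lets $z\to 0$. This yields \eqref{eq:matrix-derivative} in one stroke, without the separate diagonal-similarity argument for the block-triangular shape and without the auxiliary scale $\lambda$ and the final homogeneity step. Your route is slightly longer but equally valid; the paper's version just shows that the preliminary Step~1 and the $\lambda$-bookkeeping can be absorbed into a single perturbation of the $(2,2)$-entry.
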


Before we give the proof, we notice that for $a_1,a_2 \in A$ and $z\in\C$ the matrix identity
\begin{equation}\label{eq:intertwining}
\begin{bmatrix} a_1 & z(a_2-a_1)\\ 0 & a_2 \end{bmatrix} \begin{bmatrix} 1 & z\\ 0 & 1 \end{bmatrix} = \begin{bmatrix} 1 & z\\ 0 & 1 \end{bmatrix} \begin{bmatrix} a_1 & 0 \\ 0 & a_2 \end{bmatrix}
\end{equation}
holds; this provides similarity relations on $M_2(A)$, which will be used in the sequel.

\begin{proof}[Proof of Theorem \ref{theo:matrix-derivative}]
Take $a \in \Omega_1$ and $h \in A$ such that \eqref{eq:matrix-condition} is satisfied. Since $\Omega_1$ and $\Omega_2$ are assumed to be open, we find $r>0$ such that
$$a + z h \in \Omega_1 \qquad\text{and}\qquad \begin{bmatrix} a & h\\ 0 & a + z h\end{bmatrix} \in \Omega_2$$
for all $z\in\C$ satisfying $0 < |z| < r$. For every such $z$, notice that \eqref{eq:intertwining} gives
$$\begin{bmatrix} a & h\\ 0 & a + z h\end{bmatrix} \begin{bmatrix} 1 & \frac{1}{z}\\ 0 & 1 \end{bmatrix} = \begin{bmatrix} 1 & \frac{1}{z}\\ 0 & 1 \end{bmatrix} \begin{bmatrix} a & 0 \\ 0 & a + z h\end{bmatrix},$$
from which we infer that
$$f^{(2)}\bigg( \begin{bmatrix} a & h\\ 0 & a + z h\end{bmatrix} \bigg) = \begin{bmatrix} 1 & \frac{1}{z}\\ 0 & 1 \end{bmatrix} \begin{bmatrix} f^{(1)}(a) & 0 \\ 0 & f^{(1)}(a + z h) \end{bmatrix} \begin{bmatrix} 1 & -\frac{1}{z}\\ 0 & 1 \end{bmatrix}.$$
By another application of \eqref{eq:intertwining}, we deduce from the latter that
$$f^{(2)}\bigg( \begin{bmatrix} a & h\\ 0 & a + z h\end{bmatrix} \bigg) = \begin{bmatrix} f^{(1)}(a) & \frac{1}{z}(f^{(1)}(a + z h) - f^{(1)}(a))\\ 0 & f^{(1)}(a + z h) \end{bmatrix}.$$
As both $f^{(1)}$ and $f^{(2)}$ are continuous, the previous identity tells us when letting $z\to 0$ that the limit $(\delta f^{(1)})(a;h) = \lim_{z\to 0} \frac{1}{z}(f^{(1)}(a + z h) - f^{(1)}(a))$ exists and that
\begin{equation}\label{eq:matrix-derivative_preliminary}
f^{(2)}\bigg(\begin{bmatrix} a & h\\ 0 & a \end{bmatrix} \bigg) = \begin{bmatrix} f^{(1)}(a) & (\delta f^{(1)})(a;h) \\ 0 & f^{(1)}(a) \end{bmatrix}.
\end{equation}
Because both $\Omega_1$ and $\Omega_2$ are supposed to be open, there exists for every given $a \in \Omega_1$ some $\epsilon>0$ such that \eqref{eq:matrix-condition} is satisfied for all $h\in A$ with $\|h\|< \epsilon$. Thus, it follows that $f^{(1)}$ is G\^ateaux holomorphic, and because it is continuous by assumption and so in particular locally bounded, we conclude that $f^{(1)}$ is holomorphic; see the discussion in Section \ref{subsec:holomorphy}. In particular, we may replace in \eqref{eq:matrix-derivative_preliminary} the G\^ateaux derivative $\delta f^{(1)}(a;h)$ by the Fr\'echet derivative $(D f^{(1)})(a)h$ which leads us to the formula \eqref{eq:matrix-derivative}.
\end{proof}

For a function $f^{(\leq n)}: \Omega \to M_{\leq n}(B)$ with $f^{(k)}(\Omega_k) \subseteq M_k(B)$ for $k=1,\dots,n$, we say that \emph{$f^{(\leq n)}$ respects intertwinings} if it satisfies the following condition: whenever $X \in \Omega_k$, $Y\in \Omega_l$, and $T \in M_{k \times l}(\C)$ for $1 \leq k,l \leq n$ are such that $X T = T Y$, then $f^{(k)}(X) T = T f^{(l)}(Y)$. Like in \cite[Proposition 2.1]{KVV2014}, one sees that if $f^{(\leq n)}$ respects intertwinings, then it respects direct sums and similarities, and is thus a noncommutative function of height $n$; the proof of the converse implication, however, does not work in the truncated setting due to the limitation of height. It is therefore interesting to note that Proposition \ref{prop:opval_semicircular_Cauchy_matrix} can be strengthened:

\begin{proposition}\label{prop:opval_semicircular_Cauchy_matrix_revisited}
Let $\eta: B \to B$ be an $n$-positive linear map on the unital $C^\ast$-algebra $B$. Then the noncommutative function $G^{(\leq n)}_\eta: \bH^+(M_{\leq n}(B)) \to \bH^-(M_{\leq n}(B))$ of height $n$ introduced in Proposition \ref{prop:opval_semicircular_Cauchy_matrix} respects intertwinings.
\end{proposition}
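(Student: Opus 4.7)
The plan is to prove the intertwining property via the iteration scheme underlying Theorem \ref{thm:iteration}, bypassing any block-matrix or similarity trick. The advantage of this approach is that we avoid having to work at level $k+l$, which for $k+l > n$ lies outside the scope of $G^{(\leq n)}_\eta$ since $\eta^{(k+l)}$ need not be positive.

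First, I would record the following elementary compatibility lemma: for any $M \in M_k(B)$, $N \in M_l(B)$ and $T \in M_{k \times l}(\C)$ with $1 \leq k, l \leq n$, the identity $MT = TN$ implies $\eta^{(k)}(M) T = T \eta^{(l)}(N)$. This is immediate from the fact that $T$ has scalar entries and $\eta$ is applied entrywise, so both sides coincide with $\eta$ applied entrywise to the common value $MT = TN \in M_{k \times l}(B)$.

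Next, fix $X \in \bH^+(M_k(B))$, $Y \in \bH^+(M_l(B))$, and $T \in M_{k \times l}(\C)$ such that $XT = TY$. Let $h_X: \bH^-(M_k(B)) \to \bH^-(M_k(B))$, $h_X(w) = (X - \eta^{(k)}(w))^{-1}$, and analogously $h_Y$, be the iteration maps from Theorem \ref{thm:iteration} (applied to $\eta^{(k)}$ respectively $\eta^{(l)}$, which are positive by $n$-positivity of $\eta$). I would prove by induction on $m \geq 0$ that the intertwining is preserved at every iteration step:
$$h_X^{\circ m}(-i\1_k)\, T \;=\; T\, h_Y^{\circ m}(-i\1_l).$$
The base case $m=0$ reads $-iT = -iT$. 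For the inductive step, set $w_m := h_X^{\circ m}(-i\1_k)$ and $v_m := h_Y^{\circ m}(-i\1_l)$; from $w_m T = T v_m$ the compatibility lemma gives $\eta^{(k)}(w_m) T = T \eta^{(l)}(v_m)$, and combining this with $XT = TY$ yields
$$(X - \eta^{(k)}(w_m))\, T \;=\; T\, (Y - \eta^{(l)}(v_m)).$$
Since $w_m \in \bH^-(M_k(B))$ and $v_m \in \bH^-(M_l(B))$ guarantee invertibility of both factors in $M_k(B)$ and $M_l(B)$, we may invert and rearrange to obtain $h_X(w_m) T = T h_Y(v_m)$, closing the induction.

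Finally, Theorem \ref{thm:iteration} ensures that $h_X^{\circ m}(-i\1_k) \to G^{(k)}_\eta(X)$ and $h_Y^{\circ m}(-i\1_l) \to G^{(l)}_\eta(Y)$ in operator norm as $m \to \infty$. Passing to the limit in the intertwining just established and using continuity of left and right multiplication by $T$ then delivers $G^{(k)}_\eta(X) T = T G^{(l)}_\eta(Y)$, which is the required conclusion. No step depended on $k + l \leq n$; only $k, l \leq n$ was used, so the argument treats all admissible pairs uniformly and the difficulty inherent to the truncated height-$n$ setting is sidestepped. I therefore do not anticipate a substantial obstacle in executing this plan.
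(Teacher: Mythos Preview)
Your proposal is correct and is essentially the approach the paper has in mind: the paper states that the proof is ``similar to that of Proposition \ref{prop:opval_semicircular_Cauchy_matrix},'' and in that proof the similarity-respecting property is verified precisely by propagating the relation through the iterates $h_b^{\circ m}$ of Theorem \ref{thm:iteration} and passing to the limit. Your compatibility lemma for $\eta^{(k)}$ and $\eta^{(l)}$ under scalar intertwiners is the only additional ingredient needed, and it is indeed the immediate entrywise computation you indicate.
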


The proof of Proposition \ref{prop:opval_semicircular_Cauchy_matrix_revisited} is similar to that of Proposition \ref{prop:opval_semicircular_Cauchy_matrix} and is thus omitted.

To some extent, one can imitate in such cases the construction of the left and right difference-differential operators $\Delta_L$ and $\Delta_R$ from \cite[Section 2.2]{KVV2014}. We focus on $\Delta_R$ since $\Delta_L$ can be treated analogously. Let $f^{(\leq n)}: \Omega \to M_{\leq n}(B)$ be a noncommutative function of height $n$ on an open noncommutative set $\Omega \subseteq M_{\leq n}(A)$ of height $n$ which respects intertwinings. In the same way as \cite[Proposition 2.4]{KVV2014}, one proves that if $X \in \Omega_k$ and $Y \in \Omega_l$ are given for $1 \leq k,l\leq n$ with $k+l\leq n$, then there exists a $\C$-homogeneous map $\Delta_R f^{(\leq n)}(X,Y): M_{k \times l}(A) \to M_{k \times l}(B)$ such that
\begin{equation}\label{eq:difference-differential-operator}
f^{(k+l)}\bigg(\begin{bmatrix} X & Z\\ 0 & Y \end{bmatrix}\bigg) = \begin{bmatrix} f^{(k)}(X) & \Delta_R f^{(\leq n)}(X,Y)(Z)\\ 0 & f^{(l)}(Y) \end{bmatrix} \quad\text{whenever}\quad \begin{bmatrix} X & Z\\ 0 & Y \end{bmatrix} \in \Omega_{k+l}.
\end{equation}
While the proof of \cite[Proposition 2.6]{KVV2014} does not work without imposing further constraints on $k$ and $l$ due to the limitation of height, the first order difference formula from \cite[Theorem 2.11]{KVV2014} extends to the truncated setting with exactly the same proof: if $X \in \Omega_k$, $Y \in \Omega_l$, and $S \in M_{k \times l}(\C)$ are given for $1 \leq k,l\leq n$ with $k+l\leq n$, then
\begin{equation}\label{eq:difference-formula}
S f^{(k)}(Y) - f^{(l)}(X) S = \Delta_R f^{(\leq n)}(X,Y)(SY-XS).
\end{equation}
In the situation of Theorem \ref{theo:matrix-derivative} and under the additional assumption that $f^{(\leq 2)}: \Omega \to M_{\leq 2}(B)$ respects intertwinings, one can easily deduce from \eqref{eq:difference-formula} that $\Delta_R f^{(\leq 2)}(a,a)(h) = D f^{(1)}(a) h$ for all $a\in \Omega_1$ and $h \in A$; thus, \eqref{eq:matrix-condition} is recovered by applying \eqref{eq:difference-differential-operator}.

\begin{remark}\label{rem:Lipschitz_bound_details}
It is worthwhile to note that with the aforementioned facts, the alternative derivation of Lemma \ref{lem:Lipschitz_bound} which we sketched after the detailed proof becomes more transparent; in fact, we see that \eqref{eq:intertwining} was a shortcut avoiding the difference-differential operator. For $\eta \in \P_2(B)$, we can derive from \eqref{eq:difference-differential-operator} with the help of Lemma \ref{lem:half-plane} like in the proof of Lemma \ref{lem:Cauchy_derivatives_bound} that $\|\Delta_R G_\eta^{(\leq 2)}(b_0,b_1)(w)\| \leq \|\Im(b_0)^{-1}\| \|\Im(b_1)^{-1}\| \|w\|$ for all $b_0,b_1 \in \bH^+(B)$ and every $w \in B$; this result is in accordance with \cite[Proposition 3.1]{Belinschi2017}.
Further, we infer from the first order difference formula \eqref{eq:difference-formula} that $G_\eta(b_1) - G_\eta(b_0) = \Delta_R G_\eta^{(\leq 2)}(b_0,b_1)(b_1-b_0)$ for all $b_0,b_1 \in \bH^+(B)$. By combining both observations, we arrive again at the bound \eqref{eq:Lipschitz_bound}; we remark that this strategy is similar to the proof of \cite[Proposition 3.1]{Belinschi2017}.
\end{remark}

\end{appendix}


\bibliographystyle{abbrv}
\bibliography{ref-Dyson_equation_two_positive}

\end{document}